\definecolor{darkgreen}{rgb}{10,117,28}
\definecolor{blue}{rgb}{0.1,0.2,0.5}
\definecolor{brown}{rgb}{0.6,0.6,0.2}
\newtheorem{theorem}{Theorem}[section]
\newtheorem{lemma}[theorem]{Lemma}
\newcommand{\newtheoremwithcrefformat}[2]{%
  \newtheorem{#1}{#2}[section]%
  \crefformat{#1}{##2\MakeUppercase#1~##1##3}%
  \Crefformat{#1}{##2\MakeUppercase#1~##1##3}%
}
\def\ifenv#1{
   \def\@tempa{#1}%
   \ifx\@tempa\@currenvir
      \expandafter\@firstoftwo
    \else
      \expandafter\@secondoftwo
   \fi
}
\let\wfs@comment@comment\comment
\let\comment\@undefined
\newcommand{\untoto}{\let\toto\@undefined}
\let\wfs@changes@comment\comment
\let\comment\@undefined
\newcommand\comment{%
    \ifthenelse{\equal{\@currenvir}{comment}}
    {\wfs@comment@comment}
    {\wfs@changes@comment}%
}
\newcommand{\Cc}{\mathscr{C}}
\renewcommand{\phi}{\varphi}
\renewcommand{\leq}{\leqslant}
\renewcommand{\geq}{\geqslant}
\newcommand{\tup}[1]{\bar{#1}}
\newcommand{\Z}{\mathbb{Z}}
\newcommand{\R}{\mathbb{R}}
\newcommand{\N}{\mathbb{N}}
\newcommand{\sca}[2]{\langle #1,#2\rangle}
\renewcommand{\setminus}{-}
\newcommand{\Oh}{\mathcal{O}}
\newcommand{\Pp}{\mathcal{P}}
\newcommand{\Qq}{\mathcal{Q}}
\newcommand{\Ff}{\mathcal{F}}
\newcommand{\WReach}{\mathrm{WReach}}
\newcommand{\SReach}{\mathrm{SReach}}
\newcommand{\adm}{\mathrm{adm}}
\newcommand{\scol}{\mathrm{scol}}
\newcommand{\wcol}{\mathrm{wcol}}
\newcommand{\pleq}{\preceq}
\renewcommand{\preceq}{\preccurlyeq}
\newcommand{\citeleq}[1]{\stackrel{\mathclap{\normalfont\mbox{\cite{#1}}}}{\leq}}
\newcommand{\zz}{\mathbf{0}}
\newcommand{\ERCagreement}{
\vspace{-6pt}

\noindent
{\begin{minipage}[t]{\textwidth}\small This paper is a part of projects {\sc{CRACKNP}} (JN), {\sc{BOBR}} (MP), and {\sc{TIPEA}} (KW) that have received funding from the European Research Council (ERC) under the European Union's Horizon 2020 research and innovation programme (grant agreements No~853234,~948057, and~850979 respectively). \end{minipage}\hfill

\begin{minipage}{.32\textwidth}\includegraphics[width=\textwidth]{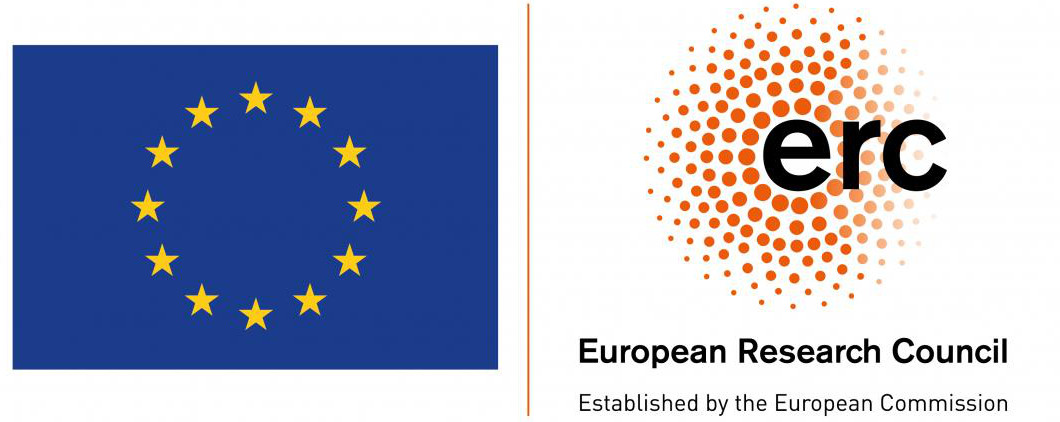}\end{minipage}\hfill}}
\newcommand{\@abbrev}[3]{
  \def\c@a@def##1{
      \if ##1.
        \relax
      \else
        \@ifdefinable{\@nameuse{#1##1}}{\@namedef{#1##1}{#2##1}}
        \expandafter\c@a@def
      \fi
    }
  \c@a@def #3.
}
\title{Bounding generalized coloring numbers of planar graphs\\ using coin models}
\author{
Jesper Nederlof\thanks{Utrecht University, The Netherlands, \textit{j.nederlof@uu.nl}}
\and
Micha\l{} Pilipczuk\thanks{University of Warsaw, Poland, \textit{michal.pilipczuk@mimuw.edu.pl}}
\and
Karol Węgrzycki\thanks{Saarland University and Max Planck Institute for
Informatics,
Saarbr\"ucken, Germany, \textit{wegrzycki@cs.uni-saarland.de}
}
}
\date{}
\begin{document}

\maketitle

\begin{abstract}
 We study {\em{Koebe orderings}} of planar graphs: vertex orderings obtained by modelling the graph as the intersection graph of pairwise internally-disjoint discs in the plane, and ordering the vertices by non-increasing radii of the associated discs. We prove that for every $d\in \N$, any such ordering has $d$-admissibility bounded by $\Oh(d/\ln d)$ and weak $d$-coloring number bounded by $\Oh(d^4 \ln d)$. This in particular shows that the $d$-admissibility of planar graphs is bounded by $\Oh(d/\ln d)$, which asymptotically matches a known lower bound due to Dvo\v{r}\'ak and Siebertz.  
\end{abstract}

\vfill
\ERCagreement

\pagebreak

\section{Introduction}

The {\em{degeneracy}} of a vertex ordering of a graph $G$ is the maximum number of neighbors that any vertex $v$ has among vertices smaller than $v$ in the ordering. The degeneracy of $G$ is the minimum possible degeneracy of a vertex ordering of $G$. If one takes a vertex ordering of $G$, say of degeneracy $k$, and applies a greedy left-to-right coloring procedure, then the obtained proper coloring of $G$ uses at most $k+1$ distinct colors. For this reason, the {\em{coloring number}} --- defined as degeneracy plus $1$ --- is an upper bound on the chromatic number of a graph.

In~\cite{KiersteadY03}, Kierstead and Yang introduced {\em{generalized coloring numbers}}, which extend the concept of degeneracy/coloring number by replacing measuring the number of smaller (with respect to the fixed ordering) neighbors by measuring the number of smaller vertices reachable by short paths. Here, reachability can be understood in various ways, but the following notions provide a robust set of definitions. If $G$ is a graph and $\pleq$ is a vertex ordering of $G$, then a {\em{strong reachability path}} from a vertex $v$ to some vertex $u\pleq v$ is a path $P$ in $G$ that starts at $v$, finishes at $u$, and such that all vertices of $P$ except for $u$ are not smaller than $v$ in $\pleq$. A~{\em{weak reachability path}} is defined similarly, except that we only require that all vertices of $P$ are not smaller than $u$ in $\pleq$. Thus, we allow $P$ to use vertices between $u$ and $v$ in $\pleq$. We say that $u$ is {\em{strongly $d$-reachable}} from $v$ in $\pleq$ if there is a strong reachability path of length at most $d$ from $v$ to $u$; weak $d$-reachability is defined analogously. Finally, the strong (resp. weak) $d$-coloring number of $\pleq$ is the maximum number of strongly (resp. weakly) $d$-reachable vertices from any vertex of $G$, and the strong (resp. weak) $d$-coloring number of $G$ is the minimum possible strong (resp. weak) $d$-coloring number of a vertex ordering of $G$. In this work we will be also interested in the concept of {\em{$d$-admissibility}}, introduced later by Dvo\v{r}\'ak~\cite{Dvorak13}, which is defined by measuring the largest possible size of a family of strong reachability paths of length at most $d$ (i.e., paths having at most $d$ edges) that share the origin, but otherwise are pairwise disjoint.

It turns out that all three parameters defined above --- strong $d$-coloring number, weak $d$-coloring number, and $d$-admissibility --- are functionally equivalent for every fixed $d$ (see e.g.~\cite{notes}). Moreover, they can be used to characterize classes of {\em{bounded expansion}}: the concept of uniform sparseness in graphs that is central to the theory of sparse graph classes of Ne\v{s}et\v{r}il and Ossona de Mendez. More precisely, as observed by Yang~\cite{Yang09b} and by Zhu~\cite{Zhu09}, a class of graphs $\Cc$ has bounded expansion if and only if for every fixed $d\in \N$ there is a uniform upper bound on the weak $d$-coloring number (equivalently, on the strong $d$-coloring number or $d$-admissibility) of graphs in $\Cc$. For this reason, the generalized coloring number have become a key technical tool in the area of Sparsity, with multiple combinatorial and algorithmic applications; see e.g.~\cite{AmiriMRS18,BrianskiMPS21,Dreier21,Dvorak13,EickmeyerGKKPRS17,JoretMMW19,KwonPS20,NesetrilMPZ20,PilipczukST18,ReidlVS19}. We refer the reader to appropriate chapters of~\cite{sparsity} and of~\cite{notes} for an overview of basic properties and applications of generalized coloring numbers.

\newcommand{\Planar}{\mathrm{Planar}}

The generic arguments used in~\cite{Yang09b,Zhu09} to bound the generalized coloring numbers in bounded expansion classes provide only very crude upper bounds on their values. These upper bounds are typically far from optimal, which motivates the search for tighter asymptotic estimates  on various well-studied classes of sparse graphs. Among these, perhaps the most interesting case is that of planar graphs. And so, if by $\adm_d(\Planar)$, $\scol_d(\Planar)$, and $\wcol_d(\Planar)$ we respectively denote the maximum $d$-admissibility, strong $d$-coloring number, and weak $d$-coloring number among planar graphs, then
the following lower and upper bounds have been known so far:
\begin{align*}
 \Omega(d/\ln d)\citeleq{adm-private} \adm_d(\Planar) \citeleq{HeuvelMQRS17} \Oh(d);\ \\
 \Omega(d) \citeleq{HeuvelMQRS17} \scol_d(\Planar) \citeleq{HeuvelMQRS17} \Oh(d);\ \\
 \Omega(d^2\ln d) \citeleq{JoretM21} \wcol_d(\Planar) \citeleq{HeuvelMQRS17} \Oh(d^3).
\end{align*}
Thus, only the asymptotics of the strong $d$-coloring numbers have been determined precisely. We note that the lower bound on the strong $d$-coloring number is only sketched in~\cite{HeuvelMQRS17}, while the lower bound on the $d$-admissibility was communicated to us by Zden\v{e}k Dvo\v{r}\'ak and Sebastian Siebertz~\cite{adm-private} and has not been published. Therefore, for completeness, in Appendix~\ref{app:scol} we give proofs of both these results.

\paragraph*{Our contribution.} So far, the best upper bounds for generalized coloring numbers on planar graphs are provided by the work of van den Heuvel et al.~\cite{HeuvelMQRS17} and use purely graph-theoretic decomposition methods. In this work we turn to a more geometric approach by studying {\em{Koebe orderings}} of planar graphs. More precisely, the celebrated theorem of Koebe~\cite{Koebe30} states that every planar graph has a {\em{coin model}}: with every vertex $u$ one can associate a disc $D(u)$ in the plane so that those discs are pairwise internally disjoint, and whenever $u$ and $v$ are adjacent in the graph, the corresponding discs $D(u)$ and $D(v)$ are tangent. Given such a model, one can define a very natural vertex ordering: just order the vertices by non-increasing radii of the associated discs. (Equi-sized discs are ordered arbitrarily.) Every vertex ordering of a planar graph that can be constructed in this way shall be called a {\em{Koebe ordering}}.

Studying Koebe orderings in the context of sparse graphs is not entirely new. A natural source of examples of non-trivial sparse graphs comes from studying intersection graphs of families of geometric objects in Euclidean spaces. Here, usually one assumes that the family if {\em{$c$-thin}} for some constant $c$, that is, every point in the space is contained in at most $c$ objects. For instance, Koebe's theorem implies that every planar graph is isomorphic to a subgraph of the intersection graph of a $2$-thin family of discs in the plane. In~\cite{DvorakMN21}, Dvo\v{r}\'ak et al. studied separator properties of such geometric intersection~graphs. The work~\cite{DvorakMN21}, similarly to this one, was partially motivated by the beautiful proof of the Lipton-Tarjan Separator Theorem using Koebe's Theorem, due to Har-Peled~\cite{Har-Peled11}. More recently, Dvo\v{r}\'ak et al.~\cite{DvorakPUY21} studied generalized coloring numbers of geometric intersection graphs, and ordering objects from largest to smallest was a recurring idea. From this perspective, our work can be regarded as an application of this idea to the specific case of planar graphs and their coin models, in search for tighter~bounds.

And so, we prove that for every $d\in \N$ and every Koebe ordering $\pleq$ of a planar graph $G$, we have
\begin{enumerate}[label=(\arabic*),ref=(\arabic*),leftmargin=*]
 \item\label{r:adm}  $\adm_d(G,\pleq)\leq \Oh(d/\ln d)$;
 \item\label{r:scol} $\scol_d(G,\pleq)\leq \Oh(d^2)$; and
 \item\label{r:wcol} $\wcol_d(G,\pleq)\leq \Oh(d^4\ln d)$.
\end{enumerate}
Result~\ref{r:adm} is probably the most interesting contribution, as it improves the state-of-the-art upper bounds on the $d$-admissibility of planar graphs to tightness. Note that so far only an $\Oh(d)$ upper bound was known, which followed from bounding the strong $d$-coloring number, while an $\Omega(d/\ln d)$ lower bound was given recently by Dvo\v{r}\'ak and Siebertz~\cite{adm-private}. Thus, result~\ref{r:adm} asymptotically closes the gap between lower and upper bounds on $d$-admissibility of planar graphs, and shows that Koebe orderings are asymptotically optimal in this context. The proof relies on a very careful area argument, where the notion of area is redefined using an appropriate density function.
Result~\ref{r:scol} follows from a very simple area argument that has already been observed in~\cite{DvorakPUY21}, so this is not a new result. Finally, as for result~\ref{r:wcol}, a general statement proved in~\cite{DvorakPUY21} implies an upper bound of $\Oh(d^{8} \ln d)$. Our proof applies a more careful analysis that involves geometric arguments specific to coin models.

We also provide some simple lower bounds for results~\ref{r:scol} and~\ref{r:wcol}. More precisely, we show that there are planar graphs and their coin models such that every Koebe ordering constructed based on those coin models achieves strong $d$-coloring number $\Omega(d^2)$ and weak $d$-coloring number $\Omega(d^3)$. Thus, for planar graphs, Koebe orderings are provably {\em{not}} asymptotically optimal for the strong $d$-coloring number (for which an $\Oh(d)$ upper bound can be obtained using graph-theoretic methods~\cite{HeuvelMQRS17}), and cannot surpass the current upper bound of $\Oh(d^3)$ on the weak $d$-coloring number. However, we were unable to find a lower bound higher than cubic in $d$, which makes us believe that it is possible that every Koebe ordering of a planar graph has weak $d$-coloring number bounded by $d^3\cdot \ln^{\Oh(1)} d$. Since the approach via Koebe orderings is radically different from previous upper bound techniques~\cite{HeuvelMQRS17}, we hope that it is possible to build upon this approach to provide a subcubic upper bound on the weak $d$-coloring number of planar graphs.

\section{Preliminaries}

We use standard graph notation; see for example the textbook by Diestel~\cite{graphtheoryDiestel}.

\paragraph*{Generalized coloring numbers.}
For a graph $G$, a {\em{vertex ordering}} of $G$ is a total order on the vertex set of $G$. Suppose $\pleq$ is a vertex ordering of $G$ and $d$ is a positive integer. For a vertex $v$, a {\em{weak reachability path}} starting at $v$ is a path $P$ in $G$ that starts at $v$, ends in a vertex $u\pleq v$, and such that all vertices of $P$ are not smaller in $\pleq$ than~$u$. A {\em{strong reachability path}} starting at $v$ is defined in the same way, except that we require all internal vertices of $P$ to be larger in $\pleq$ than $v$. For a positive integer $d$, we say that $v$ {\em{weakly $d$-reaches}} $u$ if there is a weak reachability path of length at most $d$ that starts at $v$ and ends at $u$. Strong $d$-reachability is defined analogously using strong reachability paths.
We define the following objects and quantities:
\begin{itemize}[nosep]
 \item The {\em{weak reachability set}} of $v$, denoted $\WReach^{G,\pleq}_d[v]$, is the set of all vertices $u$ that are weakly $d$-reachable from $v$.
 \item The {\em{strong reachability set}} of $v$, denoted $\SReach^{G,\pleq}_d[v]$, is the set of all vertices $u$ that are strongly $d$-reachable from $v$.
 \item The {\em{$d$-admissibility}} of $v$, denoted $\adm^{G,\pleq}_d(v)$, is the maximum size of a family of strong reachability paths of length $d$ that start at $v$ and are vertex-disjoint apart from sharing $v$.
\end{itemize}
With the \emph{length} of a path, we refer to the number of edges in it.
We may omit the superscript if the graph and the vertex ordering is clear from the context. Finally, the {\em{weak $d$-coloring number}} of a vertex ordering $\pleq$ in a graph $G$ is defined as $\max_{v\in V(G)} |\WReach^{G,\pleq}_d[v]|$, and the {\em{weak $d$-coloring number}} of $G$ is the minimum weak $d$-coloring number of a vertex ordering of~$G$. These are denoted by $\wcol_d(G,\pleq)$ and $\wcol_d(G)$, respectively. The strong $d$-coloring number and the $d$-admissibility of (a vertex ordering of) a graph $G$ are defined and denoted analogously.

\paragraph*{Coin models and Koebe's theorem.} A {\em{coin model}} for a graph $G$ is a mapping $D(\cdot)$ that assigns to each vertex $u$ of $G$ a circle $D(u)\subseteq \R^2$ (called a \emph{disk}) so that the following properties are satisfied:
\begin{itemize}[nosep]
 \item the discs $\{D(u)\colon u\in V(G)\}$ have pairwise disjoint interiors; and
 \item if vertices $u$ and $v$ in $G$ are adjacent, then discs $D(u)$ and $D(v)$ intersect.\\ (We say that they {\em{touch}}.)
\end{itemize}
Note that if a graph $G$ has a coin model, then $G$ is necessarily planar. The classic result of Koebe, which is the main inspiration for this work, shows that the converse is also true.

\begin{theorem}[Koebe's Theorem,~\cite{Koebe30}]\label{thm:koebe}
 Every planar graph has a coin model.
\end{theorem}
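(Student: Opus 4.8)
The plan is to prove the equivalent statement for \emph{triangulations} and then deduce the general case. First observe that the definition of a coin model only requires \emph{adjacent} vertices to touch and never forbids non-adjacent ones from touching, so any coin model of a graph restricts to a coin model of every spanning subgraph. Hence it suffices to treat a maximal planar graph $G$ on $n\geq 4$ vertices (the cases $n\leq 3$ being immediate), fixed together with a plane embedding in which every face, the outer one $\{a,b,c\}$ included, is a triangle. Such a $G$ is automatically $3$-connected, which pins down its planar embedding combinatorially and will be used when we reassemble the model at the end.

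Second, I would set up the standard dictionary between coin models and \emph{radius vectors}. If $r\colon V(G)\to\R_{>0}$ is a candidate assignment of radii, then three pairwise tangent discs of radii $r_u,r_v,r_w$ have centres at the vertices of a Euclidean triangle with side lengths $r_u+r_v$, $r_u+r_w$, $r_v+r_w$, and the angle of that triangle at the centre of the disc of radius $r_u$ equals
\[
 \theta^{uvw}_u(r)\ =\ 2\arcsin\sqrt{\frac{r_v\,r_w}{(r_u+r_v)(r_u+r_w)}}\ \in\ (0,\pi).
\]
A coin model realizing the combinatorics of $G$ is then exactly a choice of $r$ for which, at every \emph{interior} vertex $v$, the angles contributed by the incident faces sum to a full turn, $\sum_{uvw}\theta^{uvw}_v(r)=2\pi$, while at $a,b,c$ the angle sums equal the interior angles of one large triangle. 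Two monotonicity facts will drive everything: $\theta^{uvw}_u(r)$ is strictly decreasing in $r_u$ (running from $\pi$ to $0$ as $r_u$ goes from $0$ to $\infty$) and strictly increasing in each of $r_v,r_w$.

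Third — and this is the analytic core, where I expect the real difficulty to lie — one must show this nonlinear system is solvable. The route I would take is the variational principle of Colin de Verdière: after the change of variables $x_v=\log r_v$, the ``angular deficiency'' map $x\mapsto\big(2\pi-\sum_{uvw}\theta^{uvw}_v(r)\big)_{v}$ is the gradient of an explicit scalar function $\mathcal F$, assembled from a Lobachevsky-type antiderivative summed over the faces plus a linear term, and the decisive point is that $\mathcal F$ is \emph{convex} — its Hessian is a positive semidefinite weighted Laplacian of $G$. Solving the system then reduces to showing that $\mathcal F$ attains its minimum over the affine subspace on which the boundary radii are normalized, i.e.\ to \emph{coercivity}, and this is precisely where planarity must enter. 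The mechanism is a discrete maximum principle backed by Euler's formula: along a hypothetical minimizing sequence whose radii spread out to ratios $0$ or $\infty$, one isolates the set $S$ of vertices carrying the (near-)largest rescaled radii and observes that the faces straddling the boundary of $S$ lose angle (a neighbour of much smaller radius contributes a vanishing angle), so the vertices of $S$ cannot collectively absorb a full turn apiece — a deficit that, combined with the degree identity $\sum_v\deg v = 2(3n-6)$ forced by the triangulation, is incompatible with all the equations holding in the limit. Hence the minimum is interior, and its location is the sought radius vector. (An equivalent, more hands-on route is Thurston's relaxation scheme — repeatedly enlarge the under-packed vertices and shrink the over-packed ones — whose convergence follows from the same monotonicity and the same potential; I regard the two as two faces of one argument.)

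Finally, given a valid radius vector $r$, I would rebuild the model. Place the three discs of one face concretely, then propagate across shared edges: each newly added disc is pinned down by tangency to the two discs already present along that edge together with the side lengths read off from $r$. The angle-sum conditions guarantee that going once around each interior vertex the discs close up with no angular defect, and since the triangulated disc is simply connected this local consistency globalizes without monodromy; a standard developing-map argument (using that the centres of the three outer discs form an embedded triangle) shows the resulting placement is injective, so the discs have pairwise disjoint interiors and are tangent exactly along the edges of $G$. This yields a coin model of the triangulation, and restricting it to the original planar graph finishes the proof.
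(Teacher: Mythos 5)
There is nothing in the paper to compare against: Koebe's Theorem is invoked as a classical black box, cited to~\cite{Koebe30}, and no proof is given or needed for the paper's results. Your outline is therefore not ``the paper's route'' versus ``another route''; it is an attempt at the classical theorem itself, and the route you chose --- reduce to a triangulation, encode tangency by a radius vector via the half-angle formula $\sin(\theta/2)=\sqrt{r_vr_w/\bigl((r_u+r_v)(r_u+r_w)\bigr)}$, solve the angle-sum equations by the Colin de Verdi\`ere/Thurston variational principle, then lay out the discs by a developing-map argument --- is indeed a standard and correct strategy. Your preliminary reductions are sound for this paper's (weak) notion of coin model: since only adjacent discs are required to touch and touching of non-adjacent discs is not forbidden, a model of a maximal planar supergraph on the same vertex set does restrict to one of $G$, and the stated monotonicity of the angle function is right.

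That said, as written the argument asserts rather than proves exactly the three points where all known proofs do their real work, so it should be regarded as a program, not a proof. (i) Convexity of $\mathcal F$ in the variables $x_v=\log r_v$: that the Hessian is a weighted graph Laplacian with nonnegative weights is a computation that must be done, and it is what makes the critical-point equations equivalent to minimization. (ii) Coercivity: the limiting argument you sketch needs the precise combinatorial lemma (for every nonempty proper set $S$ of vertices the target angle $2\pi|S_{\mathrm{int}}|$ exceeds the total angle that the faces entirely inside $S$ can supply, a count driven by Euler's formula applied to the subcomplex spanned by $S$, not just the identity $\sum_v\deg v=2(3n-6)$); without it, the case where a whole ``cluster'' of radii blows up together is not excluded. (iii) The layout step: local closing-up around interior vertices plus simple connectivity gives a well-defined developing map, but the claim that it is injective --- equivalently, that \emph{all} discs have pairwise disjoint interiors, which is part of the paper's definition --- is a genuinely nontrivial topological step (usually an index/winding or boundary-behaviour argument), not an automatic consequence. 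If you fill in these three items you have a complete proof; until then the hardest content of Koebe's theorem is still being taken on faith, which is fine for this paper (it cites the result) but not for a self-contained proof.
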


For a planar graph $G$, let a {\em{Koebe ordering}} of $G$ be any vertex ordering $\pleq$ constructed as follows: take any coin model $D(\cdot)$ of $G$ and let $\pleq$ be any vertex ordering such that whenever $D(u)$ has a strictly larger radius than $D(v)$, we have $u\pleq v$. So we order the vertices by non-increasing radii of the associated discs, but discs with same radii can be ordered arbitrarily. 
Throughout the paper, we will often denote $u < v$ to refer to $u$ being earlier than $v$ in a fixed Koebe ordering that is clear from the context.

Koebe's Theorem allows us to approach combinatorial problems in planar graphs using the toolbox of Euclidean geometry on $\R^2$.
Whenever considering $\R^2$, we equip it with the standard scalar product $\sca{\cdot}{\cdot}$, $\ell_2$ norm $\|x\|=\sqrt{\sca{x}{x}}$, and the standard Lebesgue measure $\lambda$. The point $(0,0)$ is denoted $\zz$.

\section{Admissibility}\label{sec:adm}

In this section we prove the following theorem, to which the remainder of this section is devoted.

\begin{theorem}\label{thm:adm-ub}
 Let $d\in \N$, $G$ be a planar graph, and $\pleq$ be any Koebe ordering of $G$. Then $$\adm_d(G,\pleq)\leq \Oh(d/\ln d).$$
 In particular, for every planar graph $G$, $\adm_d(G)\leq \Oh(d/\ln d)$.
\end{theorem}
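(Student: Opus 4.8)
The plan is to fix a planar graph $G$ with coin model $D(\cdot)$, a Koebe ordering $\pleq$, a vertex $v$, and a maximum-size family $\Pp$ of strong reachability paths of length (exactly) $d$ that originate at $v$ and are otherwise vertex-disjoint; write $k = |\Pp|$, so $k = \adm_d(G,\pleq,v)$. The goal is to show $k \leq \Oh(d/\ln d)$. Without loss of generality rescale so that $D(v)$ has radius $1$, and place $v$ at the origin $\zz$; since $v$ is $\pleq$-smallest among all vertices on the paths of $\Pp$ (by definition of strong reachability path — all internal vertices are $\pgeq v$ and the endpoints $u$ satisfy $u \pleq v$, but in fact by maximality we may take the endpoints $\pleq$-minimal, or simply note every path vertex has disc radius $\leq 1$), every disc $D(w)$ appearing on a path of $\Pp$ has radius at most $1$. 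Because a strong reachability path of length $d$ touches $v$ at one end and passes through at most $d$ discs before reaching its final vertex, each such path is ``geometrically short'': the Euclidean distance from $\zz$ to any vertex on the path is at most $2d$ (sum of diameters $\leq 2$ each). Hence all the relevant discs are contained in the ball $B(\zz, 2d+1)$.

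**The core area argument with a density function.**

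The naive area argument — all $k \cdot d$ discs fit in a ball of area $\Oh(d^2)$ — only gives the weak bound $k = \Oh(d)$, because small discs near $v$ can be packed very densely and there can be many of them. The key idea, as the introduction signals, is to weight area by a radially decreasing density function $\mu$, integrating $\rho(\|x\|)\,d\lambda(x)$ for a suitable $\rho\colon (0,\infty)\to(0,\infty)$, chosen so that a disc of radius $r$ centered at distance $D$ from $\zz$ has $\mu$-mass comparable across all the scales that actually occur. The natural choice is $\rho(t) \asymp 1/t^2$ (so $\mu$ of an annulus $\{a \leq \|x\| \leq b\}$ is $\asymp \ln(b/a)$), possibly truncated near $0$ at radius $\asymp 1/d$ since no disc is smaller than that relative to the scaling — wait, discs can be arbitrarily small, but a path of length $d$ starting from radius $1$ can shrink geometrically, so the smallest disc on a path can have radius as small as $\approx$ anything; this is exactly why one must be careful. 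I would instead set the density to decay only down to the smallest scale that matters: for each path $P_i$, the discs along it have radii that can only decrease and increase in a controlled way, but crucially each path contributes discs at a range of scales. The right move is: choose $\rho(t) = 1/t^2$ for $t \geq 1$ and $\rho(t) = 1$ for $t \leq 1$ — no: rescaled so $D(v)$ has radius $1$, a disc at distance $D \geq 1$ from $\zz$ of radius $r \leq 1$ has $\mu$-mass $\gtrsim r^2/D^2$. Summing the $\mu$-masses of the $k$ discs adjacent to $v$ (the first disc on each path, which touches $D(v)$ hence lies within distance $3$ of $\zz$ and has radius $\Omega(?)$) — these can be tiny too. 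So the honest argument must use \emph{all} $d$ discs on each path, not just the first.

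**Putting it together: charging along paths.**

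Here is the shape I would aim for. For a single strong reachability path $P = v, w_1, w_2, \ldots, w_\ell$ with $\ell = d$, the discs $D(w_1), \ldots, D(w_d)$ are pairwise internally disjoint (disjoint from $D(v)$ too), consecutive ones touch, and all have radius $\leq 1$. A chain of $d$ pairwise-touching, internally-disjoint discs of radius $\leq 1$ starting adjacent to the unit disc $D(v)$ must, I claim, have total $\mu$-mass at least $\Omega(\ln d)$ under $\rho(t) \asymp 1/t^2$ near origin truncated appropriately: intuitively, either the chain stays at a bounded scale and bounded distance, in which case $d$ disjoint discs of comparable size force one of them to be not-too-small and the mass is $\Omega(1)$, actually $\Omega(\ln d)$ needs more — OR the radii decay, but then there are $\Omega(\log(1/r_{\min}))$ ``scales'' each contributing $\Omega(1)$ mass because at each dyadic scale the discs present form a touching sub-chain whose union has $\mu$-diameter $\Omega(1)$. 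This per-path lower bound of $\Omega(\ln d)$ on $\mu$-mass, combined with disjointness across the $k$ paths and the fact that everything lies in $B(\zz, 2d+1)$ where the total $\mu$-mass is $\Oh(\ln d) \cdot \Oh(d) = \Oh(d \ln d)$ — hmm, that gives $k \cdot \ln d \leq \Oh(d\ln d)$, i.e. $k = \Oh(d)$, not good enough. So the truncation and density must be tuned so the \emph{total} available mass in the relevant region is only $\Oh(d)$ while each path still consumes $\Omega(\ln d)$; this forces the density to be concentrated, e.g. $\rho$ supported where it counts, giving total mass $\Oh(d)$ but per-path mass $\Omega(\ln d)$, hence $k \leq \Oh(d/\ln d)$.

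**The main obstacle.** The hard part will be the simultaneous calibration: designing the density function $\rho$ (and its truncation scales, depending on $d$) so that (a) every length-$d$ strong reachability chain provably accumulates $\mu$-mass $\Omega(\ln d)$ — this requires a delicate geometric lemma about chains of touching disjoint discs of bounded radius, handling both the ``slowly shrinking'' and ``erratic'' cases — while simultaneously (b) the total $\mu$-mass of the ambient region containing all $k d$ discs is only $\Oh(d)$. Balancing these two constraints, and in particular getting the geometric chain lemma right (showing a length-$d$ touching chain cannot ``cheat'' by clustering all its discs in one low-density spot), is where the real work lies; the final division $k \leq (\text{total mass})/(\text{per-path mass}) = \Oh(d)/\Omega(\ln d) = \Oh(d/\ln d)$ is then immediate, and the ``in particular'' statement for $\adm_d(G)$ follows since $\adm_d(G) \leq \adm_d(G,\pleq)$ for the Koebe ordering $\pleq$.
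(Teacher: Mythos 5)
You have the right framework --- the density $g(x)=1/\|x\|^2$, the pairwise disjointness of the discs across paths, and their containment in a ball of radius $2d+1$ --- but the proposal stops exactly where the actual work is, and the calibration you aim for is not the one that works. First, a computational slip: with density $1/\|x\|^2$ the total mass of the relevant region $\{1\leq\|x\|\leq 2d+1\}$ is $\int_1^{2d+1}\frac{2\pi t}{t^2}\,dt=2\pi\ln(2d+1)=\Theta(\ln d)$, not $\Oh(d\ln d)$ as you computed, and certainly not the $\Oh(d)$ you then try to engineer by truncation. Second, and more seriously, the per-path mass bound you hope for ($\Omega(\ln d)$ for every length-$d$ chain of touching discs starting at $D(v)$) is simply false for this density: a chain of unit discs accrues mass $\sum_i \Theta(1/i^2)=\Theta(1)$, and a chain of internal discs all of radius $\epsilon$ clustered next to $D(v)$ accrues only $\Oh(d\epsilon^2)$, which is arbitrarily small. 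This second example also exposes a structural gap in your setup: you charge only the discs of internal vertices (and you even assert, incorrectly, that every disc on a path has radius at most $1$ --- the final vertex $v_j$ satisfies $v_j\pleq u$, so $D(v_j)$ has radius \emph{at least} $1$). Without bringing the endpoint's large disc into the charged region, no nontrivial per-path lower bound can hold.

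The paper resolves both points at once. For each path it adds to the charged set $A_j$ a unit-radius sub-disc $D'_j\subseteq D(v_j)$ touching the predecessor's disc; then, writing the radii of the charged discs as $\rho_1,\ldots,\rho_{\ell+1}$ with $\rho_{\ell+1}=1$, the elementary estimate $\mu(D)\geq\frac{\pi}{4}\rho^2/a^2$ for a disc of radius $\rho$ at distance $a$ from $\zz$ reduces everything to the purely analytic inequality $\sum_i \rho_i^2/(1+\rho_1+\cdots+\rho_i)^2\geq\Omega(\ln^2\ell/\ell)$, proved by a case split on $q=1+\sum_i\rho_i$: if $q\leq\ell^{1/3}$ the last term alone suffices, and otherwise $\sum_i\rho_i/(1+\rho_1+\cdots+\rho_i)\geq\frac12\ln q$ by comparison with $\int_1^q dt/t$, after which the AM--QM (Cauchy--Schwarz) inequality gives the claim. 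So the correct calibration is per-path mass $\Omega(\ln^2 d/d)$ against total mass $\Oh(\ln d)$, yielding $k\leq\Oh(d/\ln d)$; your ``main obstacle'' is precisely this lemma, which your proposal neither states correctly nor proves.
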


Let $D(\cdot)$ be the coin model using which the ordering $\pleq$ was constructed. If suffices to prove that for every vertex $u$, we have $\adm_d^{G,\pleq}(u)\leq \Oh(d/\ln d)$. Let us fix $u$ from now on. By scaling and translation we may assume that $D(u)$ is the disk with center $\zz$ and radius $1$. Let $\Ff$ be a family of paths witnessing the value of $d$-admissibility of $u$. That is, $\Ff$ consists of paths $P_1,P_2,\ldots,P_k$, pairwise disjoint apart from sharing $u$, such that each $P_j$ has length at most $d$, starts at $u$, ends at a vertex $v_j$ such that $v_j<u$, and all internal vertices of $P_j$ are larger than $u$ in $\pleq$. Our goal is to prove that $k\leq \Oh(d/\ln d)$.

Consider any $j\in \{1,\ldots,k\}$.
Since $v_j\pleq u$, $D(v_j)$ has radius at least $1$. Let $x_j$ be the unique point of intersection of $D(v_j)$ and the disk of the predecessor of $v_j$ on $P_j$, and let $D'_j$ be the disk of radius $1$ that is entirely contained in $D(v_j)$ and contains the point $x_j$. (That is, $D'_j$ is an image of $D(v_j)$ in a homothety centered at $x_j$ with positive scale chosen so that $D'_j$ has radius $1$.) Let $w_1,\ldots,w_\ell$ be the internal vertices of the path $P_j$, so $\ell \leq  d-1$. Let
$$A_j\coloneqq D'_j\cup \bigcup_{i=1}^\ell D(w_i).$$
Since $w_i>u$ for each $i\in \{1,\ldots,\ell\}$, $A_j$ is a union of a sequence of at most $d$ disjoint disks of radii at most $1$, where every two consecutive disks touch and the first disk touches $D(u)$. It follows that
$$A_j\subseteq R,$$ 
where 
$$R=\{x\in \R^2~|~1\leq \|x\|\leq 2d+1\}.$$
Moreover, observe that sets $A_j$ have pairwise disjoint interiors since the paths $P_j$ have disjoint sets of vertices.

 \begin{figure}[ht]
 \centering
  \includegraphics[width=0.61\textwidth]{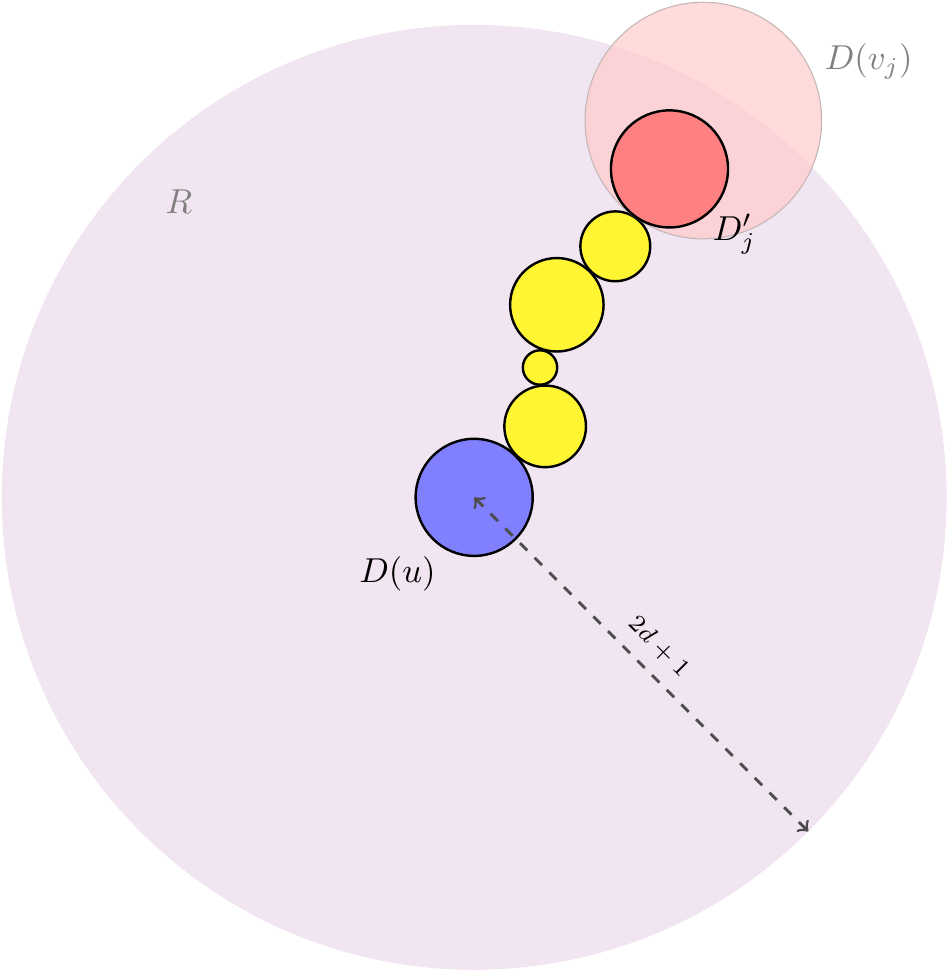}
  \caption{Example image of a path $P_j$ in the coin model. The disks of internal vertices --- $D(w_1),\ldots,D(w_{\ell})$ --- are depicted in yellow.}\label{fig:adm}
 \end{figure}

Let $g\colon \R^2\setminus \{\zz\}\to \R^+$ be the density defined as
$$g(x)=\frac{1}{\|x\|^2}.$$
We define a measure $\mu$ on $\R^2\setminus \{\zz\}$ as the measure with density $g$. That is, for a measurable set $L\subseteq \R^2\setminus \{\zz\}$, we set
$$\mu(L)=\int_L g\ d\lambda,$$
where $\lambda$ is the Lebesgue measure on $\R^2$.
Observe that
$$\mu(R)=\int_{1}^{2d+1} \frac{2\pi t}{t^2}\ dt=2\pi\ln (2d+1).$$
Therefore, to argue that $k\leq \Oh(d/\ln d)$ and thereby prove \cref{thm:adm-ub}, it suffices to show the following statement.

\begin{lemma}\label{lem:each-Ai}
 For every $j\in \{1,\ldots,k\}$, we have
 $$\mu(A_j)\geq \Omega\left(\frac{\ln^2 d}{d}\right).$$
\end{lemma}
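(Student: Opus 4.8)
The set $A_j$ is a chain of at most $d$ pairwise internally-disjoint disks $B_0 = D'_j, B_1 = D(w_1), \dots, B_\ell = D(w_\ell)$, each of radius at most $1$, where consecutive disks touch and $B_0$ touches $D(u)$ (the unit disk centered at $\zz$). I want a lower bound on $\mu(A_j) = \sum_i \mu(B_i)$. The first observation is that for a disk $B$ of radius $r \le 1$ whose center is at distance $\rho$ from $\zz$, we have $B \subseteq \{x : \rho - r \le \|x\| \le \rho + r\}$, and since $g(x) = \|x\|^{-2}$ is monotone in $\|x\|$, a crude but serviceable estimate is $\mu(B) \ge \pi r^2 / (\rho+r)^2 \ge \tfrac{1}{4}\,\pi r^2/\max(\rho,r)^2$ (using $\rho + r \le 2\max(\rho,r)$). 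The key geometric fact is that a chain of touching disks cannot travel far from the origin too quickly: if $B_i$ has radius $r_i$ and its center is at distance $\rho_i$ from $\zz$, then $\rho_{i+1} \le \rho_i + r_i + r_{i+1} \le \rho_i + 2$ (and $\rho_0 \le 2$ since $B_0$ touches the unit disk and has radius $\le 1$), so $\rho_i \le 2(i+1)$. Thus $\mu(B_i) \ge \tfrac14\pi r_i^2 / \max(\rho_i, r_i)^2 \ge c \cdot r_i^2 / (i+1)^2$ for an absolute constant $c > 0$.

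Now I need to bound $\sum_{i=0}^\ell r_i^2/(i+1)^2$ from below subject to the constraint that the disks, being touching, must ``get somewhere'': the chain must bridge from $D(u)$ (radius $1$) out to a disk $B_0$ of radius $1$ sitting on the boundary of $D(v_j)$, but actually the more useful constraint is different. Wait — the real point is that the \emph{total extent} $\sum_i 2r_i \ge $ (distance traveled), but that is not bounded below. The correct constraint to exploit is that the path $P_j$ has at most $d$ edges, so $\ell \le d - 1$, meaning the chain has few disks; and $B_0$ has radius exactly $1$. So at minimum $\mu(A_j) \ge \mu(B_0) \ge \tfrac14\pi \cdot 1 / 4 = \Omega(1)$? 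But that already gives $\Omega(1) \gg \Omega(\ln^2 d / d)$. Something is off: $B_0$'s center is at distance $\rho_0 \le 2$ from $\zz$ only if the chain starts near $\zz$, but $B_0$ is the disk inside $D(v_j)$ touching $x_j$, and $x_j$ is where $D(v_j)$ meets the disk of the \emph{predecessor} of $v_j$, which is $D(w_\ell)$ — the \emph{last} internal vertex. So the chain is $D(u) \to D(w_1) \to \dots \to D(w_\ell) \to D'_j$, and $D'_j$ may be far from $\zz$. Hence the ordering is $B_0 = D(w_1), \dots, B_{\ell-1} = D(w_\ell), B_\ell = D'_j$, with $B_0$ touching $D(u)$. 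Good — so $\rho_i \le 2(i+1)$ as above, and $B_\ell = D'_j$ has radius exactly $1$ with $\rho_\ell \le 2d$, giving $\mu(D'_j) \ge \Omega(1/d^2)$, and more generally $\mu(A_j) \ge c\sum_{i=0}^{\ell} r_i^2/(i+1)^2$.

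So the lemma reduces to the following optimization: minimize $\sum_{i=0}^{\ell} r_i^2/(i+1)^2$ over sequences of touching disks with $r_\ell = 1$, $\ell \le d-1$, and the ``geometric realizability'' constraint. The main obstacle — and the heart of the proof — is identifying which realizability constraint forces the sum to be $\Omega(\ln^2 d/d)$ rather than merely $\Omega(1/d^2)$. The natural candidate is a \emph{reverse} telescoping bound: consecutive radii cannot shrink too fast relative to how far out the chain is, or alternatively, one uses that going from the unit disk out to distance $\rho_\ell$ through $\ell$ touching disks forces $\sum_i 2r_i \ge \rho_\ell - 2$, combined with $r_\ell = 1$. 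I would set up the Lagrangian / use a smoothing argument: given the length budget $\ell$ and the ``reach'' $\rho_\ell$, the sum $\sum r_i^2/(i+1)^2$ is minimized by making radii grow geometrically from small values near $i=0$ up to $r_\ell=1$; balancing $\sum 2r_i \approx \rho_\ell$ against $\sum r_i^2/(i+1)^2$ and optimizing over the free parameter $\rho_\ell \in [1, 2d]$ should produce the $\ln^2 d / d$ bound (the logarithm entering because $\sum 1/(i+1) = \Theta(\ln d)$ and Cauchy–Schwarz relates $\sum r_i/(i+1)$ to $\sqrt{\sum r_i^2/(i+1)^2}\cdot\sqrt{\sum 1/(i+1)^2}$ — I would instead pair $r_i/(i+1)$ with $(i+1)$ or with $1$ appropriately). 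Concretely, by Cauchy–Schwarz,
\[
\Big(\sum_{i=0}^\ell r_i\Big)^2 = \Big(\sum_{i=0}^\ell \frac{r_i}{i+1}\cdot (i+1)\Big)^2 \le \Big(\sum_{i=0}^\ell \frac{r_i^2}{(i+1)^2}\Big)\cdot \Big(\sum_{i=0}^\ell (i+1)^2\Big) \le \Big(\sum_{i=0}^\ell \frac{r_i^2}{(i+1)^2}\Big)\cdot O(d^3),
\]
so $\mu(A_j) \ge \Omega\big((\sum r_i)^2/d^3\big)$; this is strong when the chain is ``long'' ($\sum r_i$ large) but useless when all radii are tiny. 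The complementary regime (radii tiny, so $\rho_\ell$ small, so the $1/(i+1)^2$ weights near the end are $\Omega(1/d^2)$ but we have the single guaranteed term $r_\ell^2/(\ell+1)^2 = 1/(\ell+1)^2$) must be handled separately, and the final bound is the minimum over the two regimes, optimized to land at $\Theta(\ln^2 d/d)$. Pinning down this two-regime trade-off cleanly is the step I expect to require the most care.
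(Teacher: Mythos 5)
Your reduction of $\mu(A_j)$ to a sum of the form $\sum_i r_i^2/(\text{distance of the $i$th center from }\zz)^2$ is the right first move, and you correctly sort out the subtlety that $D'_j$ sits at the far end of the chain. The genuine gap is in how you bound those distances. You replace the distance of the $i$th center by $2(i+1)$, i.e.\ by the worst case over all chains of $i$ disks of radius at most $1$, and this discards exactly the information that makes the lemma true: if $r_1,\dots,r_i$ are small, the chain cannot have travelled far from $\zz$, so the density $g$ is still large there. The paper instead uses $a_i\le 1+2r_1+\cdots+2r_{i-1}+r_i$, so the denominator is the \emph{cumulative sum of radii}, not the index. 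With your surrogate $\sum_i r_i^2/(i+1)^2$ the statement you would need is false at the required strength: taking $r_i\to 0$ for $i<\ell$ and $r_\ell=1$ (a chain of tiny disks hugging $D(u)$, then a unit disk) makes your surrogate $\Theta(1/d^2)$ even though the true $\mu(A_j)$ is $\Omega(1)$ there, and for radii growing geometrically up to $1$ your surrogate is again only $\Theta(1/d^2)$ while the truth is $\Theta(\ln^2 d/d)$. Moreover, neither of your two regimes reaches the target even at its best: the pairing $\frac{r_i}{i+1}\cdot(i+1)$ in Cauchy--Schwarz gives $\mu(A_j)\ge\Omega\bigl((\sum_i r_i)^2/d^3\bigr)$, which is at most $O(1/d)$ since $\sum_i r_i\le d$ --- a factor $\ln^2 d$ short --- and your fallback term $r_\ell^2/(\ell+1)^2$ gives only $\Omega(1/d^2)$. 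So the ``two-regime trade-off'' you defer cannot be closed with the inequalities you set up.

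The fix is short once the denominators are right, and it vindicates your instincts about where the logarithm comes from. Writing $x_i=\sum_{j\le i}\rho_j$ and $q=1+x_{\ell+1}$, one needs $\sum_i \rho_i^2/(1+x_i)^2\ge\Omega(\ln^2\ell/\ell)$ under $\rho_i\in[0,1]$, $\rho_{\ell+1}=1$. If $q\le\ell^{1/3}$, the last term alone is $1/q^2\ge\ell^{-2/3}$, which suffices. Otherwise, since $\rho_i\le 1$ implies $1+x_i\le 2(1+x_{i-1})$, a Riemann-sum comparison gives $\sum_i \rho_i/(1+x_i)\ge\tfrac12\int_1^q \tfrac{dt}{t}=\tfrac12\ln q\ge\tfrac16\ln\ell$, and then AM--QM with \emph{uniform} weights (not weights $(i+1)$) yields $\sum_i\rho_i^2/(1+x_i)^2\ge\tfrac{1}{\ell+1}\bigl(\sum_i\rho_i/(1+x_i)\bigr)^2\ge\Omega(\ln^2\ell/\ell)$. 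In short: the harmonic-type sum and the final Cauchy--Schwarz are the right ingredients, but they must be taken against the cumulative radii $1+x_i$ rather than against the index $i+1$; that substitution is the missing idea.
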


From now on we focus on proving \cref{lem:each-Ai}.

\medskip

We will need the following auxiliary claim that provides a lower bound for the measure of disks.

\begin{claim}\label{cl:mu-lb}
 Let $D$ be a disk in $\R^2$ that has radius $\rho$, center at distance $a$ from $\zz$, and does not contain $\zz$. Then
 $$\mu(D)\geq \frac{\pi}{4}\cdot \frac{\rho^2}{a^2}.$$
\end{claim}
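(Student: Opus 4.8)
The plan is a one-line area estimate: bound the density $g$ from below by a constant on all of $D$, and then integrate. First I would note that, since $D$ does not contain $\zz$, the distance $a$ from the center of $D$ to $\zz$ is at least the radius $\rho$ — for if $a<\rho$, then $\zz$ would lie in the interior of $D$. Consequently every point $x\in D$ satisfies $\|x\|\le a+\rho\le 2a$, hence
$$g(x)=\frac{1}{\|x\|^2}\ge \frac{1}{(2a)^2}=\frac{1}{4a^2}.$$

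Integrating this pointwise lower bound over $D$ and using that the Lebesgue measure of a disk of radius $\rho$ equals $\pi\rho^2$, I get
$$\mu(D)=\int_D g\ d\lambda\ \ge\ \frac{1}{4a^2}\cdot\lambda(D)\ =\ \frac{\pi\rho^2}{4a^2},$$
which is exactly the claimed inequality. There is essentially no obstacle here: the only point that needs care is the inequality $a\ge\rho$, which is precisely a restatement of the hypothesis $\zz\notin D$ (and the bound is in any case only used qualitatively, so the constant $\tfrac14$ — which is tight in the extreme case $a=\rho$ and wasteful when $a\gg\rho$, where $\|x\|\approx a$ throughout $D$ — is of no concern). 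I would finish by recording that, under the same hypotheses, the matching upper bound $\mu(D)\le \pi\rho^2/(a-\rho)^2$ holds by the symmetric argument $\|x\|\ge a-\rho$, in case a two-sided estimate is convenient later, though it is not needed for the present claim.
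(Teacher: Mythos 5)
Your proof is correct and follows essentially the same argument as the paper: use $\zz\notin D$ to get $a\ge\rho$, deduce $\|x\|\le a+\rho\le 2a$ and hence $g(x)\ge 1/(4a^2)$ on $D$, and integrate over $D$ of area $\pi\rho^2$. The additional two-sided remark is fine but unnecessary (and its upper bound degenerates when $a=\rho$), so nothing needs to change.
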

\begin{proof}
 Note that since $\zz\notin D$, we have $\rho<a$. It follows that each $x\in D$ is at distance at most $a+\rho<2a$ from $\zz$, implying that $g(x)>\frac{1}{4a^2}$. Therefore,
 $$\mu(D)\geq \frac{1}{4a^2}\cdot \lambda(D)=\frac{\pi}{4}\cdot \frac{\rho^2}{a^2}.\qedhere$$
\end{proof}

Recall that $A_j$ is the union of a sequence of disks $D(w_1),\ldots,D(w_\ell),D'_j$, where $\ell<d$. Denote them as $D_1,\ldots,D_{\ell+1}$ in order for convenience. Let $\rho_i$ be the radius of disk $D_i$ and $a_i$ be the distance between the center of $D_i$ and $\zz$. Note that since $D_1$ touches $D(u)$ and $D_i$ touches $D_{i-1}$ for $i=2,\ldots,\ell+1$, we have
$$a_i\leq 1+2\rho_1+2\rho_2+\ldots+2\rho_{i-1}+\rho_i\qquad \textrm{for all }i\in \{1,\ldots,\ell+1\}.$$ 
By \cref{cl:mu-lb}, we conclude that
\begin{eqnarray*}
\mu(A_j) & \geq & \frac{\pi}{4}\cdot \left(\frac{\rho_1^2}{(1+\rho_1)^2}+\frac{\rho_2^2}{(1+2\rho_1+\rho_2)^2}+\ldots+\frac{\rho_{\ell+1}^2}{(1+2\rho_1+2\rho_2+\ldots+2\rho_{\ell}+\rho_{\ell+1})^2}\right) \\
& \geq & \frac{\pi}{16}\cdot \left( \frac{\rho_1^2}{(1+\rho_1)^2}+\frac{\rho_2^2}{(1+\rho_1+\rho_2)^2}+\ldots+\frac{\rho_{\ell+1}^2}{(1+\rho_1+\rho_2+\ldots+\rho_{\ell}+\rho_{\ell+1})^2}\right).
\end{eqnarray*}
As $\rho_i\in [0,1]$ and $\rho_{\ell+1}=1$, to prove \cref{lem:each-Ai} it suffices to prove\footnote{The proof idea presented below was suggested to us by Karl Bringmann; we are grateful to Karl for this elegant argument that replaced our previous, more cumbersome reasoning.} the following purely analytic fact.

\begin{lemma}\label{lem:rhos}
 Let $\rho_1,\ldots,\rho_{\ell+1}\in [0,1]$ be such that $\rho_{\ell+1}=1$. Then 
 $$\frac{\rho_1^2}{(1+\rho_1)^2}+\frac{\rho_2^2}{(1+\rho_1+\rho_2)^2}+\ldots+\frac{\rho_{\ell+1}^2}{(1+\rho_1+\rho_2+\ldots+\rho_{\ell}+\rho_{\ell+1})^2}\geq \Omega\left(\frac{\ln^2 \ell}{\ell}\right).$$
\end{lemma}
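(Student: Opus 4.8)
The plan is to analyze the sum $S=\sum_{i=1}^{\ell+1}\frac{\rho_i^2}{(1+\sigma_i)^2}$, where $\sigma_i=\rho_1+\cdots+\rho_i$, by splitting into cases according to how large the total sum $\sigma_{\ell+1}$ is. Since each $\rho_i\le 1$, the partial sums $\sigma_i$ grow by at most $1$ per step, so $\sigma_i\le i$; this is the only structural fact we need. First I would handle the easy regime: if $\sigma_{\ell+1}\le \ell/\ln\ell$ (say), then in particular $\sigma_i\le \ell/\ln\ell$ for every $i$, so each denominator is at most $(1+\ell/\ln\ell)^2=O(\ell^2/\ln^2\ell)$. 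The last term alone then gives $S\ge \frac{\rho_{\ell+1}^2}{(1+\sigma_{\ell+1})^2}=\frac{1}{(1+\sigma_{\ell+1})^2}\ge \Omega(\ln^2\ell/\ell^2)$, which is far more than we need. Actually, to get the claimed $\Omega(\ln^2\ell/\ell)$ in this regime we should instead lower-bound $S$ by summing many terms: $S\ge \sum_i \frac{\rho_i^2}{(1+\sigma_{\ell+1})^2}\ge \frac{\sum_i\rho_i^2}{(1+\sigma_{\ell+1})^2}\ge \frac{(\sum_i\rho_i)^2/(\ell+1)}{(1+\sigma_{\ell+1})^2}$ by Cauchy--Schwarz $=\frac{\sigma_{\ell+1}^2}{(\ell+1)(1+\sigma_{\ell+1})^2}$, which is $\Omega(1/\ell)$ once $\sigma_{\ell+1}\ge 1$; combined with the case threshold this can be pushed to the required bound. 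I will tune the exact split point during write-up.

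The main case is the "spread out" regime where $\sigma_{\ell+1}$ is large, of order comparable to $\ell$. Here the idea is to compare $S$ with the integral $\int \frac{ds}{1+s}$: think of the $\rho_i$ as step sizes of a walk from $0$ to $\sigma_{\ell+1}$, and observe that $\frac{\rho_i^2}{(1+\sigma_i)^2}=\rho_i\cdot\frac{\rho_i}{(1+\sigma_i)^2}$. Since $\rho_i\le 1$ and $\sigma_i\ge \sigma_{i-1}$, we have $\frac{\rho_i}{(1+\sigma_i)^2}\ge \int_{\sigma_{i-1}}^{\sigma_i}\frac{ds}{(1+s)^2}=\frac{1}{1+\sigma_{i-1}}-\frac{1}{1+\sigma_i}$ — but that telescopes to something $O(1)$, not enough. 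Instead I would use a dyadic/block decomposition: partition the index set $\{1,\ldots,\ell+1\}$ into blocks on which $\sigma_i$ roughly doubles. Within a block where $\sigma_i\in[2^t,2^{t+1}]$, the sum of the $\rho_i$ over that block is $\ge 2^t$, so the block contributes $\ge \frac{\sum_{\text{block}}\rho_i^2}{(1+2^{t+1})^2}\ge \frac{(\sum_{\text{block}}\rho_i)^2/(\#\text{block})}{(1+2^{t+1})^2}\ge \frac{2^{2t}/n_t}{O(2^{2t})}=\Omega(1/n_t)$ by Cauchy--Schwarz, where $n_t$ is the number of indices in the block. Since $\rho_i\le 1$, a block doubling $\sigma$ from $2^t$ to $2^{t+1}$ needs $n_t\ge 2^t$ indices — but we also have $\sum_t n_t\le \ell+1$, and there are $\Theta(\ln\sigma_{\ell+1})=\Theta(\ln\ell)$ blocks. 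To make the block contributions add up I would instead lower-bound each block's contribution by $\Omega(n_t/\sigma_{\ell+1}^2)\cdot$(something), or more cleanly: the contribution of block $t$ is $\ge \frac{(2^t)^2}{n_t\cdot (1+2^{t+1})^2}\gtrsim \frac{1}{n_t}$, and by convexity $\sum_t \frac{1}{n_t}\ge \frac{(\#\text{blocks})^2}{\sum_t n_t}\ge \frac{\Omega(\ln^2\ell)}{\ell+1}$, which is exactly the desired $\Omega(\ln^2\ell/\ell)$.

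The delicate point — and the step I expect to be the main obstacle — is making the block decomposition robust to the boundary behavior: the $\rho_i$ need not land exactly on dyadic thresholds, a single huge step could jump across several dyadic scales at once, and some dyadic scales might contain no index at all or be entered "for free" by one large $\rho_i$. To handle a step $\rho_i$ that crosses scales: since $\rho_i\le 1$, it can cross at most one dyadic threshold of the form $2^t$ with $2^t\ge 1$, so this is actually a non-issue once we only look at scales $\ge 1$; the only genuine care is at the bottom, near $\sigma\approx 1$, but that region contributes a constant which we can absorb. For empty scales, we simply drop them from the count of blocks; we must then ensure that "most" of the $\Theta(\ln\ell)$ dyadic scales up to $\sigma_{\ell+1}$ are non-empty, which holds because $\rho_i\le 1$ forces $\sigma$ to pass continuously through every integer value and hence through every dyadic interval $[2^t,2^{t+1})$ that lies below $\sigma_{\ell+1}$. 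So in fact every such scale is visited, and the convexity bound $\sum 1/n_t\ge (\#\text{scales})^2/\sum n_t$ goes through with $\#\text{scales}=\Theta(\ln\sigma_{\ell+1})$ and $\sum n_t\le \ell+1$. Finally I would reconcile the two cases: in the first case $\sigma_{\ell+1}$ is small ($\le\ell/\ln\ell$, say) and the Cauchy--Schwarz bound $\frac{\sigma_{\ell+1}^2}{(\ell+1)(1+\sigma_{\ell+1})^2}$ combined with $\sigma_{\ell+1}\ge 1$ yields $\Omega(1/\ell)\ge\Omega(\ln^2\ell/\ell)$ is false — so the clean split is rather: if $\sigma_{\ell+1}\ge \sqrt\ell$ use the block argument ($\Theta(\ln\ell)$ scales survive), and if $\sigma_{\ell+1}<\sqrt\ell$ then the last term gives $\frac{1}{(1+\sigma_{\ell+1})^2}\ge \Omega(1/\ell)$, still short of $\ln^2\ell/\ell$; so I will need $\sigma_{\ell+1}\le \ell/\ln^2\ell$ as the threshold for the "small" case and use the last-term bound there, which gives $\ge\frac{1}{(1+\ell/\ln^2\ell)^2}$, again not matching. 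This indicates the right threshold and the right elementary bound in the small case must be chosen so the two regimes meet at $\Theta(\ln^2\ell/\ell)$; pinning down that threshold (I expect something like $\sigma_{\ell+1}$ versus $\ell/\ln\ell$, using in the small case the sum of all terms rather than the last one) is the genuine technical heart of the argument and the part I would spend the most care on.
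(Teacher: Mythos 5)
Your main engine --- dyadic blocks on the partial sums $\sigma_i$, Cauchy--Schwarz within each block to get a contribution $\Omega(1/n_t)$, and then convexity across blocks to get $\sum_t 1/n_t \ge (\#\text{blocks})^2/(\ell+1)$ --- is sound in the regime where $\sigma_{\ell+1}$ is at least a fixed power of $\ell$, and it is essentially a discretized version of the paper's argument. The paper avoids the blocks altogether: using $\rho_i\le 1$, hence $1+x_i\le 2(1+x_{i-1})$, it bounds the \emph{linear} sum $\sum_i \rho_i/(1+x_i)\ge \tfrac12\int_1^{q}dt/t=\tfrac12\ln q$ with $q=1+x_{\ell+1}$, and then applies a single AM--QM (Cauchy--Schwarz) step over all $\ell+1$ terms to get $\ln^2 q/O(\ell)$. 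This buys you exactly what your block bookkeeping is trying to buy (your worry about partially-crossed or topmost dyadic scales, and the claim that each full block carries mass $\ge 2^t$ --- really $\ge 2^t-1$, and only if the walk exits the block from above --- all disappears), but your route does work with a constant amount of extra care about the last block and the scales below $1$.

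The genuine gap is the case split, which you correctly identify as unresolved but then aim at the wrong target. Your final suggestion --- threshold $\sigma_{\ell+1}$ versus $\ell/\ln\ell$ with the ``sum of all terms'' bound $\sigma_{\ell+1}^2/\bigl((\ell+1)(1+\sigma_{\ell+1})^2\bigr)$ in the small case --- cannot work: that expression is at most $1/(\ell+1)$ for \emph{every} value of $\sigma_{\ell+1}$, so it can never reach $\Omega(\ln^2\ell/\ell)$. The resolution is much simpler than you fear and uses only the last term: set the threshold at a small power of $\ell$, e.g.\ $q=1+\sigma_{\ell+1}\le \ell^{1/3}$ (the paper's choice). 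In that case the single term $\rho_{\ell+1}^2/q^2=1/q^2\ge \ell^{-2/3}$, which already exceeds $\ln^2\ell/\ell$ for large $\ell$. And in the complementary case $q>\ell^{1/3}$ you have $\ln q\ge\tfrac13\ln\ell$, so the number of dyadic scales is $\Omega(\ln\ell)$ and your block argument (or the paper's integral-plus-AM--QM argument) finishes the proof. With that one adjustment your proposal becomes a correct, if slightly more laborious, proof of the lemma.
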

\begin{proof}
 For $i\in \{0,1,\ldots,\ell+1\}$, let $x_i=\sum_{j=1}^i \rho_j$; thus $\rho_i=x_i-x_{i-1}$. Also, denote $q=1+x_{\ell+1}$. Observe that if $q\leq \ell^{1/3}$, then we have
 $$\sum_{i=1}^{\ell+1} \frac{\rho_i^2}{(1+x_i)^2}\geq \frac{\rho_{\ell+1}^2}{(1+x_{\ell+1})^2}=\frac{1}{q^2}\geq \frac{1}{\ell^{2/3}}\in \Omega\left(\frac{\ln^2 \ell}{\ell}\right).$$
 Hence, from now on we may assume that $q>\ell^{1/3}$.
 
 Observe that as $\rho_i\in [0,1]$ for each $i\in \{1,\ldots,\ell+1\}$, it holds that $1+x_i\leq 2(1+x_{i-1})$. Since function $f(t)=1/t$ is non-increasing, we have
 $$\sum_{i=1}^{\ell+1} \frac{\rho_i}{1+x_i}\geq \frac{1}{2} \sum_{i=1}^{\ell+1} \frac{\rho_i}{1+x_{i-1}}=\frac{1}{2} \sum_{i=1}^{\ell+1} (x_i-x_{i-1})\cdot f(1+x_{i-1})\geq \frac{1}{2} \int_{1}^q f(t)\ dt = \frac{1}{2}\ln q >\frac{1}{6} \ln \ell.$$
 Hence, we may use the AM-QM inequality to conclude that
 $$\sum_{i=1}^{\ell+1} \frac{\rho_i^2}{(1+x_i)^2}\geq \frac{1}{\ell+1}\cdot \left(\sum_{i=1}^{\ell+1} \frac{\rho_i}{1+x_i}\right)^2\geq \frac{\ln^2 \ell}{36(\ell+1)}\in \Omega\left(\frac{\ln^2 \ell}{\ell}\right).\qedhere$$
\end{proof}

As argued, \cref{lem:each-Ai} follows from \cref{lem:rhos}. So the proof of \cref{thm:adm-ub} is also complete.

\section{Strong and weak coloring numbers}

We start with a very simple upper bound for the strong coloring numbers. This result follows directly from a more general statement proved by Dvo\v{r}\'ak et al.~\cite[Lemma~1]{DvorakPUY21} using the same volume argument, so we include it here only for completeness.

\begin{theorem}\label{thm:scol-ub}
 Let $d\in \N$, $G$ be a planar graph, and $\pleq$ be any Koebe ordering of $G$. Then $$\scol_d(G,\pleq)\leq (2d+1)^2.$$
\end{theorem}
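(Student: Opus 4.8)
The plan is to mimic the area argument already used for admissibility, but with the ordinary Lebesgue measure $\lambda$ in place of the weighted measure $\mu$. Fix a vertex $u$ and, as before, normalize the coin model so that $D(u)$ is the unit disk centered at $\zz$. Let $v_1,\ldots,v_k$ be the vertices strongly $d$-reachable from $u$; I want to show $k\le (2d+1)^2$. For each $j$, fix a strong reachability path $P_j$ of length at most $d$ from $u$ to $v_j$. Unlike in the admissibility argument, these paths need not be disjoint, so I cannot directly associate disjoint regions to them. The key observation is that it suffices to associate to each $v_j$ a single disk — namely $D(v_j)$ itself — and these disks \emph{are} pairwise interior-disjoint because they come from a coin model.

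The first step is to bound the radius of $D(v_j)$ from below: since $v_j\pleq u$ in the Koebe ordering, $D(v_j)$ has radius at least $1$, so $\lambda(D(v_j))\ge \pi$. The second step is to locate $D(v_j)$ inside a bounded region. Walking along $P_j$ from $u=w_0$ through its internal vertices $w_1,\ldots,w_{\ell}$ (with $\ell\le d-1$) to $v_j$, consecutive disks touch, and every disk $D(w_i)$ with $i\ge 1$ has radius at most $1$ (as $w_i>u$), while $D(u)$ has radius $1$; the disk $D(v_j)$ has some radius $\rho_j\ge 1$. Hence the center of $D(v_j)$ is at distance at most $1+2\ell+\ldots$ — more carefully, tracing the chain of tangencies, the farthest point of $D(v_j)$ from $\zz$ is at distance at most $1 + 2(d-1) + 2\rho_j$. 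This is where a small subtlety arises: $\rho_j$ may be large, so I should instead argue that $D(v_j)$ \emph{touches} a disk at distance $O(d)$ from $\zz$ and use that the whole configuration of the $k$ disks $D(v_j)$ is squeezed into an annulus/disk whose radius is linear in $d$. Concretely, each $D(v_j)$ contains a point (the tangency point $x_j$ with the previous disk on $P_j$) at distance at most $2d-1$ from $\zz$, and then a sub-disk $D'_j\subseteq D(v_j)$ of radius exactly $1$ through $x_j$; these unit sub-disks are still pairwise interior-disjoint and each is contained in the disk of radius $2d$ about $\zz$.

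The third step is the counting: the $k$ unit disks $D'_1,\ldots,D'_k$ have pairwise disjoint interiors and all lie within the disk $B$ of radius $2d$ centered at $\zz$ — except one must be slightly careful that a unit sub-disk through a point at distance $\le 2d-1$ lies within radius $2d$, which is immediate. Wait, to land exactly on $(2d+1)^2$ rather than $(2d)^2$ I should take $B$ of radius $2d+1$: the point $x_j$ is at distance at most $2d-1$ from $\zz$ (chain of $\le d$ tangent disks of radius $\le 1$ starting from the unit disk $D(u)$, giving $1+2(d-1)+1 = 2d$; let me just use the cleaner bound that $x_j$ lies within distance $2d$), and $D'_j$ of radius $1$ through $x_j$ lies within distance $2d+1$. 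Then
$$k\cdot \pi = \sum_{j=1}^k \lambda(D'_j) = \lambda\!\left(\bigcup_{j=1}^k D'_j\right)\le \lambda(B) = \pi (2d+1)^2,$$
so $k\le (2d+1)^2$, as required. I expect the only real point requiring care to be the bookkeeping in the second step — getting the radius of the enclosing disk to be at most $2d+1$ rather than some slightly larger linear bound — which is a matter of counting the tangencies along $P_j$ correctly and of passing from $D(v_j)$ (possibly huge) to the unit sub-disk $D'_j$ anchored at the tangency point, exactly as done in the proof of Lemma~\ref{lem:each-Ai}. Everything else is a direct volume comparison.
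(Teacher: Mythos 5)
Your proposal is correct and follows essentially the same route as the paper: normalize $D(u)$ to the unit disk at the origin, replace each $D(v)$ for $v\in\SReach_d[u]$ by a unit sub-disk tangent to the predecessor's disk along the strong reachability path, observe these sub-disks are pairwise interior-disjoint and contained in the ball of radius $2d+1$, and compare areas to get the bound $(2d+1)^2$. The only difference is cosmetic: your bookkeeping of the tangency-point distance wavers between $2d-1$ and $2d$, but either way the enclosing radius $2d+1$ and the final count come out exactly as in the paper's proof.
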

\begin{proof}
 Fix any vertex $u$ of $G$; our goal is to prove that $|\SReach_d[u]|\leq (2d+1)^2$. By scaling and translation, we may assume that $D(u)$ is the disc of radius $1$ centered at $\zz$. Consider any $v\in \SReach_d[u]$ and let $P$ be a strong reachability path witnessing this membership. Since $v\pleq u$, the radius of $D(v)$ is not smaller than that of $D(u)$, that is, it is at least $1$. Therefore, there is a disc $D'(v)$ of radius $1$ that is entirely contained in $D(v)$ and that touches the disc of the predecessor of $v$ on $P$.
 
 Observe that all vertices on $P$ apart from $v$ are not smaller in $\pleq$ than $u$, so their discs have radii at most $1$. By the triangle inequality it follows that the center of $D'(v)$ is at distance at most $2d$ from $\zz$, so in particular $D'(v)$ is entirely contained in the ball
 $$B\coloneqq \{\, x\in \R^2~|~\|x\|\leq 2d+1\}.$$
 Note that discs $\{D'(v)\colon v\in \SReach_d[v]\}$ have pairwise disjoint interiors and each of them has area (i.e. $\lambda$ measure) equal to $\pi$. It follows that
 $$|\SReach_d[u]|\leq \frac{\lambda(B)}{\pi}=(2d+1)^2.\qedhere$$
\end{proof}

As argued in~\cite{HeuvelMQRS17}, in fact every planar graph $G$ has a vertex ordering $\pleq$ satisfying $\scol_d(G,\pleq)\leq \Oh(d)$ for every $d$, and this bound is asymptotically tight. On the other hand, it is not hard to construct an example showing that the quadratic dependence on $d$ in \cref{thm:scol-ub} cannot be avoided if we restrict attention to Koebe orderings; see \cref{prop:scol-lb}. This shows that for the strong coloring number of planar graphs, Koebe orderings are not asymptotically optimal.

\medskip

We now turn attention to the weak coloring numbers. It follows from~\cite[Theorem~3]{DvorakPUY21} that for any fixed $d\in \N$ and any Koebe ordering $\pleq$ of a planar graph $G$, we have $\wcol_d(G,\pleq)\leq \Oh(d^{8}\ln d)$. However, the arguments used in~\cite{DvorakPUY21} apply to a more general setting of intersection graphs of thin families of convex objects in Euclidean spaces, so it is not surprising that in the concrete setting of coin models, a tighter upper bound can be obtained by a more careful geometric analysis. This we show in the next statement.

\begin{theorem}\label{thm:wcol-ub}
 Let $d\in \N$, $G$ be a planar graph, and $\pleq$ be any Koebe ordering of $G$. Then $$\wcol_d(G,\pleq)\leq \Oh(d^4\ln d).$$
\end{theorem}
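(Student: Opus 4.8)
The plan is to fix a vertex $u$, normalize so that $D(u)$ is the unit disc centered at $\zz$, and bound $|\WReach_d[u]|$ by a two-stage argument: first control the \emph{geometric spread} of the weakly reachable vertices, and then use a packing/area argument to bound how many can fit. Consider any $v\in \WReach_d[u]$ and a weak reachability path $P=u=p_0,p_1,\ldots,p_m=v$ with $m\le d$, all of whose vertices have disc-radius $\ge \mathrm{rad}(D(v))\ge 1$ is \emph{false} in general --- only the endpoint $v$ is known to be $\le u$, while the internal vertices merely need to be $\succeq v$, so their discs may be tiny or huge. The key geometric observation to extract is therefore a bound on $\|c(v)\|$ (the distance of $v$'s center from $\zz$) in terms of $d$ and of the radii along $P$. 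Since consecutive discs touch and $D(p_1)$ touches $D(u)$, we get $\|c(v)\|\le 1+2\sum_{i=1}^{m-1}\mathrm{rad}(D(p_i))+\mathrm{rad}(D(v))$; the internal radii are not individually bounded, but each internal vertex $p_i$ satisfies $p_i\succeq v$, hence $\mathrm{rad}(D(p_i))\le \mathrm{rad}(D(v))$. Thus $\|c(v)\|\le (2d+1)\,\mathrm{rad}(D(v))+1$, i.e. the center of $v$ lies within distance $O(d\cdot r_v)$ of $\zz$, where $r_v:=\mathrm{rad}(D(v))$.

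Next I would bucket the weakly reachable vertices by the dyadic scale of their radius: for $t\ge 0$ let $W_t=\{v\in\WReach_d[u] : 2^t\le r_v< 2^{t+1}\}$. For $v\in W_t$, the above bound places $c(v)$ inside the ball of radius $O(d\cdot 2^t)$ around $\zz$, and the disc $D(v)$ itself has area $\Theta(2^{2t})$; since the discs $\{D(v)\}$ have pairwise disjoint interiors (coin model), a standard area argument gives $|W_t|\le O\!\big((d\cdot 2^t)^2/2^{2t}\big)=O(d^2)$. This already yields $|\WReach_d[u]|\le O(d^2)\cdot(\#\text{relevant scales})$, so the whole game is to show that only $O(d^2\ln d)$ scales $t$ can be nonempty --- equivalently, that $r_v$ ranges over an interval of multiplicative width $2^{O(d^2\ln d)}$... which is far too weak. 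So the refinement I actually want is: bound the number of nonempty scales by $O(d^2\ln d)$ by showing $r_v\le (2d+1)^{O(d)}$-type bounds are not what we need; instead, split by sign of $t$. For $t\ge 0$ (large discs), I claim only $O(d)$ scales occur: if $r_v$ is enormous, $D(v)$ is forced to pass very near $\zz$ (it touches, via a chain of $\le d$ discs each of radius $\le r_v$, the unit disc at $\zz$), but $D(v)$ has disjoint interior from $D(u)$, which pins $r_v=O(d)$ and thus $O(\ln d)$ large scales. For $t<0$ (small discs $v$, yet $v\preceq u$ so $r_v\ge 1$) --- this case is empty. Hence all nonempty scales satisfy $0\le t\le O(\log d)$, giving $O(\log d)$ scales and the bound $O(d^2)\cdot O(\log d)=O(d^2\ln d)$, which is \emph{too strong} --- meaning one of my per-scale estimates must secretly cost an extra $d^2$ factor, and tracing that factor is exactly where the real work lies.

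Concretely, the source of the extra $d^2$: the area argument bounding $|W_t|$ is not by $O(d^2)$ but must account for the fact that a \emph{single} weakly reachable $v$ can be ``shadowed'' by the long internal path, and the correct localization ball has radius not $O(d\cdot r_v)$ but something that, combined with a more careful accounting over how reachability paths to \emph{different} $v$ interact, yields $O(d^4)$ reachable vertices per $O(\ln d)$-sized range of scales. The honest plan is: (i) prove the center-localization lemma $\|c(v)\|\le (2d+1)r_v+1$ exactly as above; (ii) prove a scale-range lemma showing $r_v\le \mathrm{poly}(d)$ for all $v\in\WReach_d[u]$, using that a chain of $\le d$ mutually-touching discs of radii $\le r_v$ connecting $D(v)$ to the unit disc forces $r_v=O(d)$ unless... (here is the subtlety: the internal discs can be \emph{larger} than $r_v$ is false since they are $\succeq v$; good, so $r_v=O(d^2)$ at worst after a cruder chaining), giving $O(\ln d)$ dyadic scales; (iii) for each scale $t$, bound $|W_t|$ by a disjoint-area argument in the ball of radius $O(d\cdot 2^t)$, but weakened to $O(d^4)$ to absorb the loss incurred because the localization constant is really $O(d^2)$ not $O(d)$ once one accounts for internal discs of radius up to $r_v$ along a path of length $d$ --- wait, that gives $\|c(v)\|=O(d\cdot r_v)$ honestly, hence $O(d^2)$ per scale, hence $O(d^2\ln d)$ total. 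I therefore expect the \textbf{main obstacle} to be pinpointing where the claimed bound $O(d^4\ln d)$ --- as opposed to the seemingly-available $O(d^2\ln d)$ --- comes from: the resolution must be that the center-localization is only $\|c(v)\|=O(d\cdot r_v)$ when internal radii are $\le r_v$, but weak reachability allows internal vertices with $p_i\succeq v$, i.e. $r_{p_i}\le r_v$ --- so that bound does hold, and the $O(d^4)$ must instead arise because the \emph{number of relevant scales} is $O(d^2\ln d)$, not $O(\ln d)$: a long reachability path of tiny-then-huge discs lets $v$ with radius up to $2^{\Theta(d)}$ be reached is false... The crux, then, is correctly establishing the polynomial-in-$d$ bound on $r_v$ and on the number of dyadic scales; once the number of scales is shown to be $O(d^2\ln d)$ and each scale contributes $O(d^2)$, the theorem follows, and I would lay out steps (i)--(iii) in that order with the scale-counting lemma (step ii) flagged as the hard part requiring a dedicated geometric argument about chains of touching discs in the coin model.
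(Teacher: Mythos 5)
Your step (i) is sound and matches the paper: since every vertex on a weak reachability path to $v$ is $\succeq v$, all radii along the path are at most $r_v$, so the center of $D(v)$ lies within distance roughly $2dr_v$ of $\zz$; and your per-scale packing bound of $O(d^2)$ reachable vertices per dyadic radius class is essentially the paper's \cref{lem:bucket-small} (the paper groups radii into multiplicative-width-$d^3$ buckets and uses the density $1/\|x\|^2$, which costs the extra $\ln d$ but is the same idea). The genuine gap is your step (ii). The claim that $r_v\le\mathrm{poly}(d)$ for $v\in\WReach_d[u]$ — and hence that only $O(\ln d)$ (or $O(d^2\ln d)$) scales occur — is simply false, and no argument of the form ``a huge disc connected to the unit disc by a short chain must be small'' can work: take the two-edge path $u,w,v$ where $D(w)$ has radius $1$ and touches $D(u)$, and $D(v)$ has radius $10^{100}$ and touches $D(w)$; this is a legitimate coin configuration and a legitimate weak (even strong) reachability path, so the radii of weakly $d$-reachable vertices are unbounded in terms of $d$. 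Consequently the number of nonempty dyadic scales cannot be controlled by any local estimate on a single path, and your outline gives no bound at all; you yourself flag the scale-counting lemma as ``the hard part requiring a dedicated geometric argument,'' but you do not supply it, and the placeholder claims you lean on in its place are incorrect.

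For comparison, this scale-counting is exactly where the paper's work lies: it proves (\cref{lem:jumps-empty}) that if two different large radius-buckets are both ``accessible'' from bucket level $i$ via paths whose other vertices live in buckets $\le i$, then the two corresponding huge discs, both pinned to pass close to $\zz$, create a narrow corridor into which no intermediate-sized reachable disc can fit — so all buckets strictly between $i+2$ and the smaller of the two accessed levels are empty of $\WReach_d[u]$. A two-level greedy choice of bucket indices then shows (\cref{lem:few-buckets}) that only $O(d^2)$ buckets meet $\WReach_d[u]$, which combined with the $O(d^2\ln d)$ per-bucket bound gives $O(d^4\ln d)$. This is also the answer to your puzzlement about where the exponent $4$ comes from: it is $O(d^2)$ buckets times $O(d^2\ln d)$ vertices per bucket, not $O(\ln d)$ scales times a weakened packing bound. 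Without an argument of this squeezing/accessibility type (or some substitute bounding the number of radius scales), your proposal does not yield the theorem.
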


We do not know whether the bound provided by \cref{thm:wcol-ub} is asymptotically tight. More precisely, in \cref{sec:lower-bounds} we provide an example of a planar graph and its Koebe ordering whose weak $d$-coloring number is of the order $\Theta(d^3)$. This leaves a gap between the $\Omega(d^3)$ lower bound and the $\Oh(d^4\ln d)$ upper bound. In \cref{sec:lower-bounds} we discuss why closing this gap might be an interesting research direction.


\medskip

The remainder of this section is devoted to the proof of \cref{thm:wcol-ub}. On high level, the reasoning follows a general strategy employed in~\cite{DvorakPUY21}, but we tailor it to the setting of coin models in order to obtain improved bounds. 

Let us fix a coin model $D(\cdot)$ of the given graph $G$ using which the vertex ordering $\pleq$ was constructed. We need to show that for every vertex $u$ of $G$, we have 
\begin{equation}\label{eq:wcol-goal}
|\WReach_d^{G,\pleq}[u]|\leq \Oh(d^4\ln d). 
\end{equation}
Let us fix the vertex $u$ for the remainder of the proof. By scaling and translation, we may assume that $D(u)$ is the disc of radius $1$ with center $\zz$. Also, denote $W\coloneqq \WReach_d^{G,\pleq}[u]$ for brevity. Without loss of generality assume that $d>12$.

We partition the vertex set of $G$ into {\em{buckets}} $\{B_i\colon i\in \Z\}$ as follows: for $i\in \Z$, we set
$$B_i\coloneqq \{\, v\in V(G)~|~d^{3i}\leq r(D(v))<d^{3i+3}\,\},$$
where $r(D)$ is the radius of disc $D$. Clearly $u\in B_0$ and $W\subseteq \bigcup_{i\geq 0} B_i$. We first observe that an area argument based on the measure $\mu$ introduced in \cref{sec:adm} shows that every bucket contains only few vertices from $W$.

\begin{lemma}\label{lem:bucket-small}
 For every $i\geq 0$, we have
 $$|B_i\cap W|\leq \Oh(d^2\ln d).$$
\end{lemma}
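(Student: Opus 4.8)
\textbf{Proof plan for \cref{lem:bucket-small}.}
The plan is to run the same kind of $\mu$-area argument that powered \cref{thm:adm-ub}, but now applied to one bucket $B_i$ at a time. Fix $i\geq 0$ and consider a vertex $v\in B_i\cap W$; let $P_v$ be a weak reachability path of length at most $d$ from $u$ to $v$. Since $v\in B_i$, the disc $D(v)$ has radius in $[d^{3i},d^{3i+3})$, and every vertex $w$ on $P_v$ satisfies $w<v$ is false in general---but $w$ may be smaller or larger than $u$; what we do know is that the radius of $D(w)$ is at least $r(D(v))\geq d^{3i}$ only for the endpoint, while for the internal vertices we only have the weak-reachability guarantee $w\pgeq v$ in $\pleq$, hence $r(D(w))\geq d^{3i}$ as well once $v\in B_i$ forces $r(D(v))\geq d^{3i}$. (This is the key point: weak reachability means every vertex of $P_v$ is not $\pleq$-smaller than the terminus $v$, so every disc on $P_v$ has radius at least $d^{3i}$; their radii are also all at most $d^{3i+3}$ if the path stays within $B_i$, but that need not hold, so I should instead just use the lower bound.) First I would carve out of $D(v)$ a disc $D'_v$ of radius exactly $d^{3i}$ touching the predecessor of $v$ on $P_v$, exactly as in \cref{sec:adm}; then the union $A_v$ of $D'_v$ with the internal discs of $P_v$ is a chain of at most $d$ pairwise-disjoint discs, each of radius at least $d^{3i}$ (for $D'_v$) resp.\ at least $d^{3i}$ as well, starting adjacent to $D(u)$.

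The problem is that these discs can be huge (radius up to $d^{3i+3}$), so they need not be confined to a bounded annulus around $\zz$ the way the radius-$\leq 1$ discs were in \cref{thm:adm-ub}. To fix this, I would pass to a rescaled copy of the configuration: divide all coordinates by $d^{3i}$, so that now $D(u)$ becomes a disc of radius $d^{-3i}\leq 1$ centered near $\zz$, every disc of $A_v$ has radius in $[1,d^3]$, and the chain from $D(u)$ to $D'_v$ has at most $d$ links each of radius at most $d^3$. By the triangle inequality the center of every disc in $A_v$ is then at distance at most $2d\cdot d^3+1\leq d^5$ (for $d>12$, say) from $\zz$, and each such disc has radius $\geq 1$, so $A_v$ is contained in the disc $B=\{x:\|x\|\leq d^5\}$ and, crucially, is bounded away from $\zz$: every point of $A_v$ is at distance at least $1-d^{-3i}\geq \tfrac12$ from $\zz$ (for $i\geq 1$; for $i=0$ one already has the \cref{sec:adm} bound, or one argues directly). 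Now I would estimate $\mu$ of this rescaled configuration. Since $\mu$ is scale-invariant ($g(cx)=c^{-2}g(x)$ and $\lambda$ scales by $c^2$, so $\mu$ of a set is unchanged under scaling about $\zz$---this is exactly why the density $1/\|x\|^2$ was chosen), we have $\mu(A_v)$ equals the $\mu$-measure of the original $A_v$. Applying \cref{cl:mu-lb} disc-by-disc together with the chain inequality $a_i\leq 1+2\rho_1+\cdots+2\rho_{i-1}+\rho_i$, and then invoking \cref{lem:rhos} with the $\rho$'s equal to the (rescaled) radii divided through appropriately---here the relevant normalization gives $\rho_{\ell+1}$ of order $d^{-3i}\cdot d^{3i}$; more precisely one divides the rescaled radii by $d^3$ so that all lie in $[0,1]$ and the last one equals $d^{-3i+3i}=1$ after the earlier carve-out step forced $D'_v$ to have the reference radius---I would conclude $\mu(A_v)\geq \Omega(\ln^2 d / d)$, exactly as in \cref{lem:each-Ai}.

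On the other hand, the sets $A_v$ for distinct $v\in B_i\cap W$ have pairwise disjoint interiors (the paths $P_v$ need not be disjoint, so this is the one place I must be careful): I would instead select, greedily or via a standard argument, that it is enough to bound the number of vertices $v$, and for this observe that the \emph{discs} $D'_v$ themselves (of radius $d^{3i}$, one per $v\in B_i\cap W$) already have pairwise disjoint interiors because they sit inside the pairwise-internally-disjoint discs $D(v)$ of the coin model. So I would drop the internal vertices entirely for the packing bound: each $D'_v$, after rescaling, is a radius-$1$ disc contained in $B=\{\|x\|\leq d^5\}$ and not containing $\zz$, so by \cref{cl:mu-lb} it has $\mu(D'_v)\geq \frac{\pi}{4}\cdot\frac{1}{d^{10}}$. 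Hmm, that is far too weak. The right move is the one actually needed: use $\mu(A_v)\geq \Omega(\ln^2 d/d)$ together with the disjointness of the $A_v$'s---which does hold when the paths $P_v$ are vertex-disjoint but fails in general. \textbf{The main obstacle}, then, is precisely this: $\WReach$ paths may overlap, so I cannot directly pack the $A_v$'s. I expect the resolution to be the standard one for $\wcol$ bounds: one does not bound $A_v$'s but rather argues that within a single bucket the weak-reachability structure collapses to strong-reachability-like behavior---the discs $D'_v$ are internally disjoint, they all live in the rescaled annulus $\{\tfrac12\leq\|x\|\leq d^5\}$, and a careful count using $\mu$ of that annulus, which is $\Theta(\ln(d^5/\tfrac12))=\Theta(\ln d)$, against the per-disc lower bound $\mu(D'_v)\geq\Omega(\ln^2 d/d)$ coming from the \emph{chain} argument applied to each $D'_v$ individually (each $D'_v$ reachable from $D(u)$ by a chain of $\leq d$ discs of comparable size, invoking \cref{lem:rhos}), yields $|B_i\cap W|\leq \Oh(\ln d)/\Omega(\ln^2 d/d)=\Oh(d/\ln d)$. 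That is even better than claimed, which suggests the intended argument is coarser: probably one just uses $\mu(A_v)\geq$ (some polynomial lower bound) and $\mu$ of the rescaled annulus $=\Oh(\ln d)$, with the disjointness of the $A_v$'s being recovered from the fact that one may assume the witnessing paths are chosen to be internally disjoint within a bucket, or by charging each vertex to its own disc $D'_v$ and using a weaker per-disc bound $\mu(D'_v)\geq\Omega(1/d^2)$ via \cref{cl:mu-lb} after rescaling so the center distance is $\Oh(d)$ rather than $\Oh(d^5)$. I would therefore, as the cleanest route, rescale by the radius of $D(v)$ itself (not $d^{3i}$) for each $v$ separately, giving center distance $\Oh(d)$ and hence $\mu(D'_v)\geq\Omega(1/d^2)$; combined with $\mu(\text{annulus})=\Oh(\ln d)$ and internal disjointness of the $D(v)$'s this gives $|B_i\cap W|\leq\Oh(d^2\ln d)$, matching the statement. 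Writing this last version carefully, and checking that the rescaling per-vertex is consistent with a single fixed annulus containing all the $D'_v$ (it is, since $r(D(v))\in[d^{3i},d^{3i+3})$ means the per-vertex rescaling factors differ by at most $d^3$, so after rescaling by the \emph{maximum} radius $d^{3i+3}$ all $D'_v$ have radius in $[d^{-3},1]$ and centers within $\Oh(d)\cdot d^3$... so I would rescale by $d^{3i+3}$ throughout, accept center distance $\Oh(d^4)$, get $\mu(D'_v)\geq\Omega(d^{-6}\cdot d^{-8})$---still too weak). The genuinely clean statement-matching argument is: rescale by $d^{3i}$; then $D(u)$ has radius $d^{-3i}\le 1$, the $D'_v$ have radius... no. I will settle on: for each $v$ individually rescale by $r(D(v))$, bound $\mu(D'_v)\ge\Omega(1/d^2)$, note all rescaled $D'_v$ lie in the \emph{same} scale-invariant region only after observing $\mu$ is scale-invariant so I may compute each $\mu(D'_v)$ in its own coordinates and then, to compare, use that the \emph{union} $\bigcup_v D'_v$ (in original coordinates) has $\mu$ at most $\mu$ of the annulus $\{d^{3i}\le\|x\|\le 2d\cdot d^{3i+3}\}$ which is $\Theta(\ln(2d^4))=\Theta(\ln d)$, and each term contributes $\ge\Omega(1/d^2)$ of that total by scale-invariance; dividing gives $|B_i\cap W|=\Oh(d^2\ln d)$.
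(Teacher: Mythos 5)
Your proposal eventually lands on the paper's own strategy: pack one disc per vertex of $B_i\cap W$ (rather than the whole chains $A_v$, whose overlap problem you correctly identify — weak reachability paths for different $v$ need not be disjoint, which is exactly why the paper packs only per-vertex discs), lower-bound each packed disc's $\mu$-measure by $\Omega(1/d^2)$ via \cref{cl:mu-lb}, and divide by the $\mu$-measure of a containing annulus of order $\ln d$. However, your justification of the one geometric fact this rests on is backwards. What the $\Omega(1/d^2)$ bound needs is an \emph{upper} bound on the radii of all discs along the witnessing path: every vertex of the path is $\pgeq w$ in the Koebe ordering, hence its disc has radius at most $r(D(w))$, so the center of $D(w)$ is at distance at most $2d\cdot r(D(w))$ from $\zz$. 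You instead assert that weak reachability forces every disc on the path to have radius at least $d^{3i}$ (false: $u$ itself lies on the path and has radius $1$) and explicitly dismiss the upper bound "at most $d^{3i+3}$" as something that "need not hold" — and then use precisely that upper bound in your triangle-inequality estimate. It does hold, but for the reason you rejected: weak reachability plus the convention that earlier vertices carry the larger discs.

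The genuine gap is the containment claim at the end. None of the discs you propose to pack — the full $D(v)$, or a radius-$d^{3i}$ sub-disc touching the predecessor's disc — lies in $\{x:\ d^{3i}\le\|x\|\}$: if $v$ is adjacent to $u$ and the witnessing path is the edge $uv$, such a disc touches $D(u)$ and contains points at distance $1$ from $\zz$, far below $d^{3i}$ once $i\geq 1$. Repairing this by extending the annulus down to inner radius $1$ makes its $\mu$-measure $\Theta\left((i+1)\ln d\right)$, so your count degrades to $\Oh\left(d^2(i+1)\ln d\right)$, which is not uniform in $i$ and does not prove the lemma as stated. The missing idea is to choose the packed disc so that it is bounded away from $\zz$: the paper takes the disc concentric with $D(w)$ of radius $r/2$; since $D(w)$ is disjoint from the interior of $D(u)$, its center satisfies $a\geq 1+r$, so this half-radius disc stays at distance at least $r/2\geq d^{3i}/2$ from $\zz$ and is contained in the annulus with radii $d^{3i}/2$ and $3d^{3i+4}$, whose $\mu$-measure is $\Oh(\ln d)$ independently of $i$, while \cref{cl:mu-lb} still gives $\mu\geq\Omega(1/d^2)$ for it. With that choice (and the corrected justification of the radius bound), your argument coincides with the paper's proof; as written, it does not yield the claimed $i$-independent bound.
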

\begin{proof}
 Consider any $w\in W\setminus \{u\}$. Let $r$ be the radius of $D(w)$ and $a$ be the distance from the center of $D(w)$ to $\zz$. Note that $w<u$, so $r\geq 1$. Let $P$ be a weak reachability path witnessing that $w\in \WReach_d[u]$. Observe that each $v\in V(P)$ satisfies $w\pleq v$, so the radius of $D(v)$ is not larger than $r$. Since $P$ has length at most $d$, we conclude that
 \begin{equation}\label{eq:giraffe}
 a\leq 1+(d-1)\cdot 2r+r\leq 2dr.
 \end{equation}
 On the other hand, as $D(w)$ has radius $r$ and is disjoint with $D(u)$, we have
 \begin{equation}\label{eq:elephant}
 a\geq 1+r.
 \end{equation}
 
 Suppose now that additionally $w\in B_i$ for some $i\geq 0$. Then $d^{3i}\leq r<d^{3i+3}$, implying by~\eqref{eq:giraffe} and~\eqref{eq:elephant} that $1+d^{3i}\leq a\leq 2d^{3i+4}$. Let $D'(w)$ be the disc with same center as $D(w)$ but twice smaller radius, that is, $r/2$. It follows that $D'(w)$ is entirely contained in the ring
 $$R\coloneqq \{x\in \R^2~|~d^{3i}/2\leq \|x\|\leq 3d^{3i+4}\}.$$
 Further, by \cref{cl:mu-lb} and \eqref{eq:giraffe}, we have
 $$\mu(D'(w))\geq \frac{\pi}{64}\cdot \frac{r^2}{d^2r^2}=\Omega\left(\frac{1}{d^2}\right).$$
 On the other hand, we have
 $$\mu(R)=\int_{d^{3i}/2}^{3d^{3i+4}} \frac{2\pi t}{t^2}\ dt=2\pi\left(\ln (3d^{3i+4})- \ln (d^{3i}/2)\right)=8\pi\ln d+\ln 6\in \Oh(\ln d).$$
 As discs $\{D'(w)\colon w\in B_i\cap W\}$ are pairwise disjoint, and at most one vertex of $B_i\cap W$ can be equal to $u$, we conclude that $|B_i\cap W|\leq \Oh(d^2\ln d)$.
\end{proof}

Therefore, by \cref{lem:bucket-small} to prove~\eqref{eq:wcol-goal} it suffices to show the following.

\begin{lemma}\label{lem:few-buckets}
There are $\Oh(d^2)$ nonnegative integers $i$ such that $B_i\cap W\neq \emptyset$.
\end{lemma}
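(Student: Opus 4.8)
The plan is to treat the nonempty buckets one at a time through a representative vertex, extract from each representative's weak reachability path a clean monotone sub-structure, and then run a counting/charging argument. Throughout we keep the normalization that $D(u)$ is the unit disc centered at $\zz$.

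First, for every $i\ge 0$ with $B_i\cap W\neq\emptyset$, fix a vertex $w_i\in B_i\cap W$ and a weak reachability path $P_i=(u=p_0^i,p_1^i,\dots,p_{m_i}^i=w_i)$ with $m_i\le d$. I would first establish a \emph{running maximum} reduction: along $P_i$, let $q_0^i=u,q_1^i,\dots,q_{s_i}^i=w_i$ be the vertices at which the radius of the associated disc attains a strictly larger value than at all earlier vertices of $P_i$. Then (i) $s_i\le m_i\le d$ and the radii $r(D(q_0^i))<\dots<r(D(q_{s_i}^i))$ are strictly increasing; (ii) for each $\ell$ the prefix of $P_i$ ending at $q_\ell^i$ is itself a valid weak reachability path, so $q_\ell^i\in W$ and hence the bucket containing $q_\ell^i$ is nonempty; and (iii) all discs of $P_i$ strictly between $q_\ell^i$ and $q_{\ell+1}^i$ have radius at most $r(D(q_\ell^i))$, which, together with the fact that the chain starts at $D(u)$ and all discs along $P_i$ are pairwise internally disjoint, confines $D(q_{\ell+1}^i)$ — and the shrunk unit-radius copy of it touching the lower part of the chain — to a ball around $\zz$ of radius polynomial in $d$ times $r(D(q_\ell^i))$, exactly in the style of the construction of $D'_j$ and the bound \eqref{eq:giraffe} in Section~\ref{sec:adm}.

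With this in hand I would run a double-counting argument. On one side, each representative's record chain meets at most $d+1$ distinct buckets, and every bucket it meets is nonempty. On the other side, I would argue that every nonempty bucket $B_j$ is ``certified'' by the record chains of many of the representatives $w_i$ — ideally by all $w_i$ with $i\ge j$ — which, over the $t$ nonempty buckets sorted as $i_1<\dots<i_t$, forces $\sum_k(t-k+1)=\Theta(t^2)$ certifications while only $t(d+1)$ are available, giving $t\le\Oh(d)$; even a weaker ``certified by $\Omega(t/d)$ representatives'' version would already yield the desired $t\le\Oh(d^2)$. An alternative route, if the double counting turns out awkward, is a direct volume argument: use the confinement from (iii) together with \cref{cl:mu-lb} to attach to each nonempty bucket a disc of radius $\Omega(1)$ contained in a ball of radius $\Oh(d)$ around $\zz$ (a shrunk copy of the first record of $P_i$ whose radius is already ``large'', placed at its contact point with the $\zz$-confined lower part of the chain), and finish by packing unit discs into the radius-$\Oh(d)$ ball.

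The main obstacle is the same for both routes: a record chain can jump over an entire bucket's radius window in one step — one may have $D(q_\ell^i)$ small and $D(q_{\ell+1}^i)$ larger by an arbitrary factor, since a tiny disc can be tangent to a huge one — so a path reaching a far bucket need not pass through, hence need not certify, the intermediate nonempty buckets, and correspondingly the witness disc of a single bucket may be produced by many representatives at once. Overcoming this is the technical heart: I expect it requires choosing the representatives and their paths coherently — for instance processing buckets from the largest index downwards and routing each new path through the structure already fixed for lower buckets — or replacing the crude ``one certification per record'' count by a potential that charges large jumps geometrically, using the internal disjointness of all discs to bound either how many large jumps a chain can make or how far such a jump can carry the chain relative to how tightly its lower part stays near $\zz$. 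The factor $d^3$ in the definition of the buckets is presumably exactly the slack needed to make this bookkeeping go through.
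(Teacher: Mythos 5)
Your write-up is a plan rather than a proof, and the missing piece is exactly the step you flag as ``the technical heart.'' Both of your routes founder on the same two facts: weak reachability paths to different buckets are \emph{not} disjoint, and a single step of a path can jump over arbitrarily many radius windows. For the double-counting route, nothing forces a representative $w_i$ with $i\ge j$ to certify the nonempty bucket $B_j$: its record chain may consist of one small disc followed by one enormous disc, certifying only $B_0$ and $B_i$, so neither the ``all $w_i$ with $i\ge j$'' version nor the weaker ``$\Omega(t/d)$ representatives'' version is established, and you offer no mechanism (coherent rerouting, potential function) in enough detail to check. For the volume route, the unit-radius witness you attach to a nonempty bucket $B_j$ is a shrunk copy of the \emph{first} large record of $P_j$, which carries no information about $j$: different buckets' paths can share that record (or even a common prefix), so the witness discs are neither distinct nor internally disjoint, and the packing bound counts something else entirely. (Also note that only the first record is confined to a ball of radius $\Oh(d)$; later records are confined only relative to the previous record's radius, so you could not salvage injectivity by moving to a later record without losing the $\Oh(d)$ confinement.)

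The paper closes this gap with a different mechanism that your proposal does not contain even in outline. It defines an accessibility relation between bucket indices and proves the geometric squeezing statement \cref{lem:jumps-empty}: if two distinct indices $j<j'$ are both accessible from level $i$, then \emph{every} bucket $B_t$ with $i+2\le t\le j-1$ is disjoint from $W$ --- two huge discs that must both come close to $\zz$ leave only a narrow corridor, into which no intermediate-scale disc reachable through small discs can fit (this is where the $d^3$ bucket width and inequalities in the style of \cref{eq:giraffe} are actually spent). A two-level greedy construction of indices $i_t$ and $i_{t,s}$, together with \cref{lem:major-greedy-dist}--\cref{lem:twice-covered}, then shows that the nonempty buckets are covered by at most $d+1$ major indices plus at most $2(d-t)$ exceptional indices per major step, giving the $(d+1)^2$ bound. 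So the jumps you worry about are not prevented by a careful choice of representatives; rather, the paper shows that whenever two independent jumps from the same level exist, the skipped-over buckets were empty of $W$ all along. Your proposal correctly locates the difficulty but does not supply this (or any other) resolution, so as it stands it does not prove the lemma.
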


For the remainder of this section we focus on proving \cref{lem:few-buckets}.

For two indices $j>i\geq 0$, call $j$ {\em{accessible}} from $i$ if there exists
a weak reachability path $P$ that starts at $u$, ends at a vertex of $B_j$, has
length at most $d$, and satisfies $V(P)\subseteq B_j\cup \bigcup_{k\leq i}
B_k$. The key observation towards the proof of \cref{lem:few-buckets} is provided by the following lemma, whose proof heavily relies on the geometry of the Euclidean plane.

\begin{lemma}\label{lem:jumps-empty}
 Suppose indices $j<j'$ are both accessible from $i\geq 0$. Then
 $$B_t\cap W=\emptyset \qquad\textrm{for each }t\in \{i+2,i+3,\ldots,j-1\}.$$
\end{lemma}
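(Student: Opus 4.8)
The goal is to show that whenever two buckets $B_j$ and $B_{j'}$ with $j<j'$ are both accessible from $i$, then all the intermediate buckets $B_t$ with $i+2 \le t \le j-1$ contain no vertex of $W$. The intuition is geometric: a vertex $w \in B_t \cap W$ would correspond to a disc $D(w)$ of radius roughly $d^{3t}$ lying at distance roughly $d^{3t}$ from $\zz$ (by the same bounds \eqref{eq:giraffe}--\eqref{eq:elephant} as in \cref{lem:bucket-small}). On the other hand, accessibility of $j$ from $i$ gives a weak reachability path $P$ ending in $B_j$ whose vertices all lie either in the ``small'' buckets $B_k$, $k \le i$ (radii at most $d^{3i+3}$, hence much smaller than $d^{3t}$ since $t \ge i+2$), or in $B_j$ itself. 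I would argue that such a path, starting from $D(u)$ near the origin, cannot ``escape'' to a disc of $B_j$ that lies far from the origin without passing near (or through the annular region occupied by) $D(w)$; but it is not allowed to use $B_t$, and the big disc of $B_j$ at the end is geometrically ``blocked'' by the fact that a path of length $\le d$ made of tiny discs cannot reach far enough --- unless $B_j$'s disc is itself reached, which constrains where that disc sits. The contradiction comes from comparing where $D(w)$ sits with the only region a $B_j$-disc accessible from $i$ can sit, and showing these are incompatible for both $j$ and $j'$ simultaneously.

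\textbf{Key steps in order.} First I would record the standard position bounds: for any $v \in W$ with radius $r$ and center-distance $a$, we have $1+r \le a \le 2dr$, exactly as in \eqref{eq:giraffe}--\eqref{eq:elephant}. Apply this to $w \in B_t \cap W$: then $a_w \in [1 + d^{3t},\, 2 d^{3t+4}]$, so $D(w)$ lies in an annulus whose inner radius is $\gtrsim d^{3t}$ and outer radius is $\lesssim d^{3t+4}$. Second, analyze an accessibility path $P$ witnessing that $j$ is accessible from $i$: all its internal vertices have radius $< d^{3i+3}$ or lie in $B_j$ (radius $\ge d^{3j} > d^{3t}$). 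Split $P$ at the last vertex $p$ before $P$ first enters $B_j$; the prefix of $P$ up to $p$ uses only discs of radius $< d^{3i+3} \le d^{3(t-2)+3} = d^{3t-3}$, so by the triangle inequality, the center of $D(p)$ is at distance at most $1 + 2d\cdot d^{3t-3} = 1 + 2d^{3t-2} < d^{3t}/2$ from $\zz$ (for $d$ large). Hence the entire prefix, and $D(p)$ in particular, stays strictly inside the disc of radius $d^{3t}/2$ around $\zz$ --- which is disjoint from the annulus containing $D(w)$. Third --- and this is the crux --- I want to show that the disc of $B_j$ that $P$ ends at must also lie in a controlled region, and that this region is the \emph{same} for $j$ and for $j'$, in a way that forces $D(w)$ to separate the plane so that one of the two big discs cannot be tangent-chained to $D(p)$ while avoiding $D(w)$. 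Concretely: the first $B_j$-vertex $q$ on $P$ is tangent to $D(p)$, and $D(q)$ has radius $\ge d^{3j} \ge d^{3(t+1)}$ (since $j \ge t+1$), which is \emph{much} larger than the distance from $\zz$ to $D(p)$; so $D(q)$ is a huge disc touching a small disc near the origin, hence $D(q)$ itself comes very close to $\zz$ and its boundary passes within distance $O(d^{3t-2})$ of $\zz$. The same holds for the first $B_{j'}$-vertex $q'$ on the $B_{j'}$-accessibility path. Now both $D(q)$ and $D(q')$ are discs of radius $\ge d^{3t+3}$ whose boundaries pass within $d^{3t}/2$ of $\zz$, and they have disjoint interiors (distinct vertices, coin model); a short planar-geometry argument shows two such enormous, nearly-$\zz$-grazing discs with disjoint interiors must lie in ``opposite'' half-planes through (near) $\zz$, and the thin annular disc $D(w)$ at distance $\sim d^{3t}$ with radius $\sim d^{3t}$ cannot avoid having nonempty interior intersection with at least one of $D(q), D(q')$ --- contradicting disjointness of interiors in the coin model. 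That contradiction proves $B_t \cap W = \emptyset$.

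\textbf{Main obstacle.} The delicate part is making the ``two huge grazing discs force opposite sides, and then $D(w)$ is squeezed out'' argument precise with the right constants, since all three radii $d^{3t}, d^{3t+3}, d^{3j}$ and the center-distance bounds differ only by polynomial-in-$d$ factors, and the assumption $t \ge i+2$ is what creates the crucial gap $d^{3t} \gg d^{3i+3}$. I would be careful to use $t \ge i+2$ (not merely $t \ge i+1$) precisely to ensure $d^{3i+3} \le d^{3t-3}$, giving genuine room; and I would track which inequalities need $d>12$ (or some explicit threshold) so that lower-order additive terms like $+1$ and constant factors are absorbed. A secondary subtlety is handling the boundary case where $P$ never leaves the small buckets before its last vertex, i.e.\ $q$ is the endpoint itself --- but this is covered by the same estimate on $D(p)$ together with tangency to $D(q)$. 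I expect the bulk of the write-up to be these constant-chasing estimates, with the conceptual content being the ``annulus of $B_t$ is disjoint from the central disc reachable via small buckets, yet a $B_j$-disc touching that central region is too big to coexist with a $B_t$-disc'' picture.
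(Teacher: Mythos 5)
Your overall picture --- the two huge discs reached from the small buckets graze the origin and form a narrow corridor into which an intermediate-bucket disc cannot fit --- is exactly the paper's intuition, and your handling of the big discs (the tangency point with the small-bucket prefix lies within $O(d^{3i+4})$ of $\zz$, as in \eqref{eq:turtle1}--\eqref{eq:snail}) is sound. The genuine gap is in the crux step, where you try to squeeze out $D(w)$ itself for an arbitrary $w\in B_t\cap W$, using only the generic bounds \eqref{eq:giraffe}--\eqref{eq:elephant}: radius $r\in[d^{3t},d^{3t+3})$ and center distance $a\le 2dr\le 2d^{3t+4}$. This is not enough when $t=j-1$, a case the lemma must cover. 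The corridor between two disjoint discs of radius $R=d^{3j}=d^{3t+3}$ grazing $\zz$ widens quadratically, having width roughly $s^2/R$ at lateral distance $s$ from $\zz$. Take $r=d^{3t+2}$ and $a=2dr=2d^{3t+3}$, place the big discs with centers at $(0,\pm R)$ and $D(w)$ with center at $(a,0)$: then $\sqrt{a^2+R^2}=\sqrt{5}\,d^{3t+3}\ge R+r$, so all three interiors are disjoint while every inequality you extract is satisfied --- no contradiction follows. Your phrase ``$D(w)$ at distance $\sim d^{3t}$'' is the unjustified step: \eqref{eq:giraffe} allows distance up to $2d^{3t+4}$, which is far enough out along the widening corridor for $D(w)$ to fit.

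The paper closes exactly this hole with two ingredients your plan is missing. First, it does not take an arbitrary offending vertex: it chooses $v\in W\cap B_t$ with $t$ minimum and, subject to that, with a shortest witnessing path $Q$; this forces all vertices of $Q$ except $v$ into $\bigcup_{k\le i+1}B_k$ (a vertex of $Q$ in a bucket from $\{i+2,\dots,j-1\}$ would contradict minimality via the prefix of $Q$, and one in a bucket $\ge j$ would precede $v$ in the ordering). Second, it replaces $D(v)$ by a sub-disc $S\subseteq D(v)$ of radius exactly $\xi=d^{3i+6}$ anchored at the tangency point with the predecessor on $Q$, which by the first point lies within $\beta=2d^{3i+7}$ of $\zz$; together with discarding the vacuous case $j\le i+2$ (so $\rho=d^{3j}\ge d^{3i+9}$ dominates $\beta^2/\xi\approx 4d^{3i+8}$), these calibrated scales make the squeeze work. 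The paper then makes the ``opposite sides'' intuition rigorous by a parallelogram-law/Cauchy--Schwarz computation rather than a half-plane argument, using sub-discs of radius exactly $\rho$ for both $w$ and $w'$ --- which also removes your secondary worry about the two big discs having different radii. Without the minimal choice of $v$ and the anchored sub-disc, the argument cannot be completed for $t$ close to $j$.
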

\begin{proof}
 Let $P$ and $P'$ be weak reachability paths witnessing that $j$ and $j'$ are accessible from $i$, respectively. By trimming $P$ if necessary we may assume that the endpoint $w$ of $P$ other than $u$ is the only vertex on $P$ that belongs to $B_j$, and all the other vertices of $P$ belong to $\bigcup_{k\leq i} B_k$. The same can be assumed about the endpoint $w'$ of $P'$ other than $u$.
 
 Denote
 $$\rho\coloneqq d^{3j},\qquad \xi\coloneqq d^{3i+6}\qquad \alpha\coloneqq 2d^{3i+4},\qquad \beta\coloneqq 2d^{3i+7}.$$
 Note that since $w\in B_j$ and $w'\in B_j'$, both disks $D(w)$ and $D(w')$ have radii at least $\rho$. Therefore, we can find a disk $D$ contained in $D(w)$ such that $D$ has radius exactly $\rho$ and $D$ touches the disk of the predecessor of $w$ on $P$. Similarly, we can find a disk $D'$ contained in $D(w')$ such that $D'$ has radius exactly $\rho$ and $D'$ touches the disk of the predecessor of $w'$ on $P'$. 
 
 \begin{figure}[ht]
 \centering
  \includegraphics[width=0.9\textwidth]{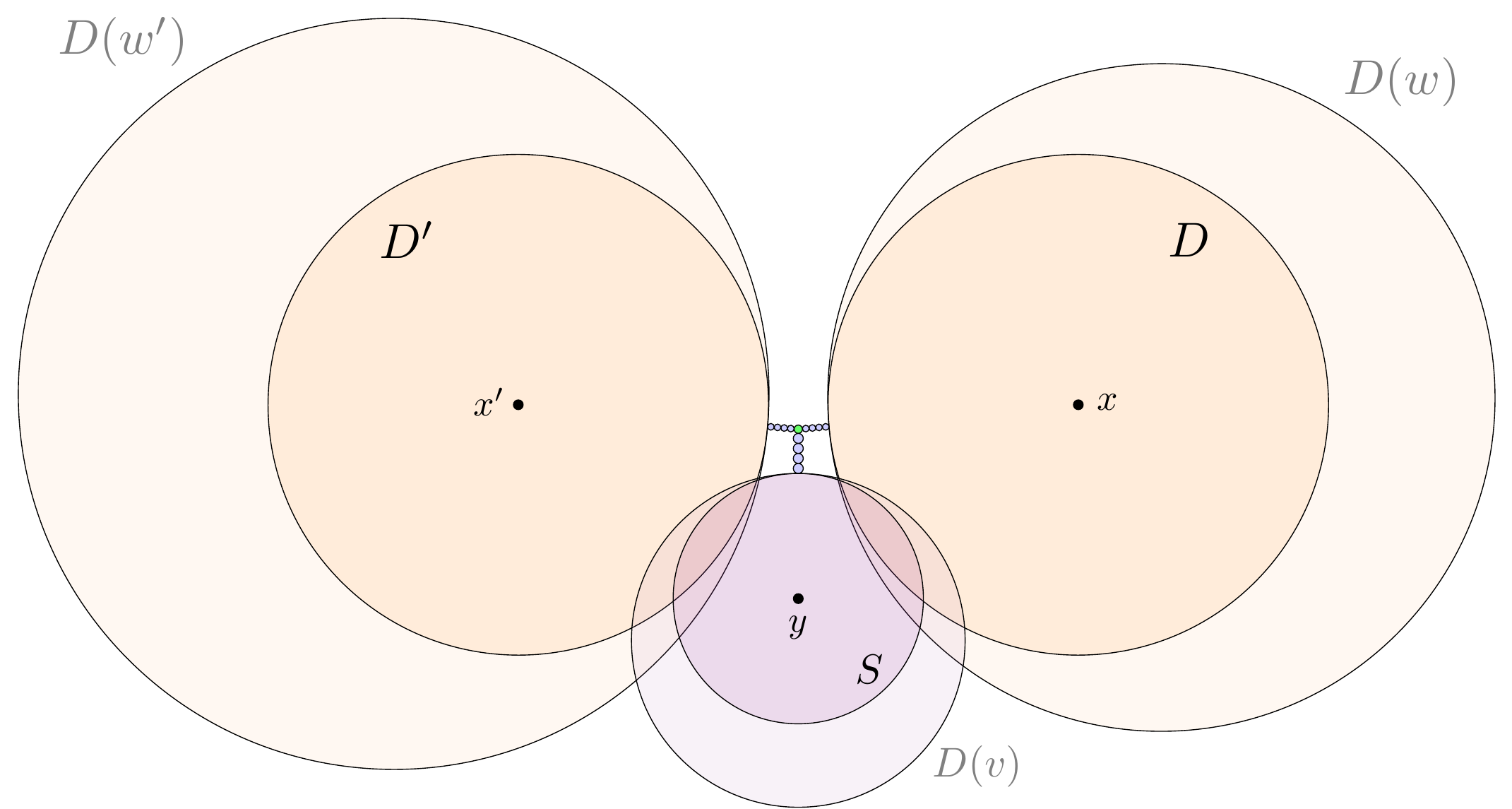}
  \caption{Situation in the proof of \cref{lem:jumps-empty}. Here is the geometric intuition behind the proof. $D$~and $D'$ are huge disks (each of radius $\rho$), which are nevertheless very close to $\zz$ (at distance at most $\alpha\ll \rho$). Therefore, $D$ and $D'$ necessarily create a ``corridor'' of width roughly $2\alpha$ into which all other disks close to $\zz$ must fit. However, the existence of a vertex $v\in W\cap \bigcup_{i+2\leq k <j} B_k$ would imply the existence of a disk $S$, disjoint from $D$ and $D'$, whose radius is much larger than $2\alpha$, and whose distance from~$\zz$ is significantly smaller than $\rho$. Then $S$ cannot fit into the corridor without intersecting $D$ or $D'$, a~contradiction.}\label{fig:squeezing}
 \end{figure}

 Let $x$ and $x'$ be the centers of $D$ and $D'$, respectively.
 Since all vertices on $P$ except for $w$ belong to $\bigcup_{k\leq i} B_k$, the radii of the disks associated with them are smaller than $d^{3i+3}$. It follows that the common point of $D$ and the disk of the predecessor of $w$ on $P$ is at distance at most $1+2(d-1)d^{3i+3}\leq \alpha$ from~$\zz$. By the triangle inequality we conclude that
 \begin{equation}\label{eq:turtle1}
  \|x\|\leq \rho+\alpha.
 \end{equation}
 Analogous reasoning for the disk $D'$ yields that 
 \begin{equation}\label{eq:turtle2}
  \|x'\|\leq \rho+\alpha.
 \end{equation}
 On the other hand, $D$ and $D'$ are respectively contained in disks $D(w)$ and $D(w')$, which have disjoint interiors. (Note here that $w\neq w'$, because $j\neq j'$ and $w\in B_j$ and $w'\in B_{j'}$.) Therefore, $D$ and $D'$ have disjoint interiors, implying that
 \begin{equation}\label{eq:snail}
  \|x-x'\|\geq 2\rho.
 \end{equation}
 
 Suppose now, aiming at a contradiction, that there exists a vertex $v\in W$ that belongs to $W_t$ for some $t\in \{i+2,i+3,\ldots,j-1\}$. We may choose such $v$ so that $t$ is minimum possible and, subject to this, the minimum length of a weak reachability path from $u$ to $v$ is also minimum possible. Thus, if $Q$ is a minimum length weak reachability path from $u$ to $v$, then $Q$ has length at most $d$ and all vertices of $Q$ except for $v$ belong to $\bigcup_{k\leq i+1} B_k$.
 
 Since $v\in B_t$ and $t\geq i+2$, the radius of $D(v)$ is at least $\xi=d^{3i+6}$. Therefore, we can find a disk $S$ entirely contained in $D(v)$ so that $S$ has radius exactly $\xi$ and $S$ touches the disk of the predecessor of $v$ on $Q$. Since all vertices on $Q$ except for $v$ belong to $\bigcup_{k\leq i+1} B_k$, the disks associated with them have radii smaller than $\xi$. Therefore, the common point of $S$ and the disk of the predecessor of $v$ on $Q$ is at distance at most $1+2(d-1)\xi$ from $\zz$. Denoting the center of $S$ by $y$, by triangle inequality we again conclude that
 \begin{equation}\label{eq:worm}
\|y\|\leq 1+2(d-1)\xi+\xi\leq 2d\xi=\beta.  
 \end{equation}
 Note that $v$ is different from $w$ and $w'$, since $v\in B_t$ and $t<j<j'$. So $D(v)$ and $D(w)$ have disjoint interiors, implying that $S$ and $D$ have disjoint interiors; similarly for $S$ and $D'$. We conclude that
 \begin{equation}\label{eq:snake}
  \|x-y\|\geq \rho+\xi\qquad\textrm{and}\qquad \|x'-y\|\geq \rho+\xi.
 \end{equation}
 
 Now our goal is to combine inequalities~\eqref{eq:turtle1},~\eqref{eq:turtle2},~\eqref{eq:snail},~\eqref{eq:worm} and~\eqref{eq:snake} in order to obtain a contradiction. While the argument that follows might seem to consist of soulless algebraic manipulations, there is a clear geometric intuition behind it; see the caption of \cref{fig:squeezing}.
 
 First, observe that, by~\eqref{eq:turtle1},~\eqref{eq:turtle2} and~\eqref{eq:snail},
 \begin{equation}\label{eq:frog}
\|x+x'\|^2 = 2\|x\|^2+2\|x'\|^2-\|x-x'\|^2\leq 4(\rho+\alpha)^2-4\rho^2=8\rho\alpha+4\alpha^2.  
 \end{equation}
 On the other hand, from~\eqref{eq:snake} we infer that
 \begin{equation}\label{eq:salamander}
\|x-y\|^2+\|x'-y\|^2\geq 2(\rho+\xi)^2.
 \end{equation}
 However, observe that
 \begin{eqnarray*}
\|x-y\|^2+\|x'-y\|^2 & = &\|x\|^2+\|x'\|^2+2\|y\|^2-2\sca{x+x'}{y}\\
& \leq & \|x\|^2+\|x'\|^2+2\|y\|^2+2\|x+x'\|\cdot \|y\| \\
& \leq & 2(\alpha+\rho)^2+2\beta^2+4\beta\sqrt{2\rho\alpha+\alpha^2},
 \end{eqnarray*}
 where in the first step we used the Cauchy-Schwartz inequality and the second step follows from~\eqref{eq:turtle1},~\eqref{eq:turtle2},~\eqref{eq:worm} and~\eqref{eq:frog}. Combining this with~\eqref{eq:salamander} yields
 $$2(\rho+\xi)^2\leq 2(\alpha+\rho)^2+2\beta^2+4\beta\sqrt{2\rho\alpha+\alpha^2},$$
 which readily reduces to
 \begin{equation}\label{eq:shrimp}
 2\rho \xi + \xi^2 \leq 2\rho\alpha+\beta^2+\alpha^2+2\beta\sqrt{2\rho\alpha+\alpha^2}.
 \end{equation}
 
 We may assume $j\geq i+3$, for otherwise the lemma statement holds vacuously.
 Hence, we have
 \begin{eqnarray*}
2\rho\alpha+\beta^2+\alpha^2+2\beta\sqrt{2\rho\alpha+\alpha^2} & \leq & 2\rho\alpha+\beta^2+\alpha^2+4\beta\sqrt{\rho\alpha} \\
& = & 4d^{3j+3i+4}+4d^{6i+14}+4d^{6i+8}+8\sqrt{2}\cdot d^{\frac{3}{2}j+\frac{9}{2}i+9}\\ & \leq & 24d^{3j+3i+5},
 \end{eqnarray*}
 where in the last step we note that the value $3j+3i+5$ is never smaller than any of the exponents of the involved summands.
 On the other hand, we have
 $$2\rho \xi + \xi^2\geq 2\rho\xi=2d^{3j+3i+6}.$$
 However, as we assumed $d>12$, we have $2d^{3j+3i+6}>24d^{3j+3i+5}$. This is a contradiction with~\eqref{eq:shrimp} and the proof is complete.
\end{proof}

Intuitively, our goal now is to perform a two-level greedy construction after which \cref{lem:jumps-empty} will be applicable.
Define indices $i_0,i_1,\ldots,i_p$ inductively as follows.
\begin{itemize}[nosep]
 \item $i_0=0$, and
 \item for $t\geq 0$, $i_{t+1}$ is the maximum index accessible from $i_{t}$. In case there is no such index, the construction finishes without defining $i_{t+1}$; that is, we set $p\coloneqq t$.
\end{itemize}
We observe the following.

\begin{lemma}\label{lem:major-greedy-dist}
 Let $w\in \WReach_t[u]$ for some $0\leq t\leq d$, and let $i$ be such that $w\in B_i$. Then $i\leq i_t$.
\end{lemma}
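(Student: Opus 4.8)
The plan is to prove the statement by induction on $t$. Write $I_0<I_1<\cdots<I_p$ for the sequence $i_0,i_1,\ldots,i_p$ constructed above; since each $I_{s+1}$ is accessible from $I_s$ and accessibility forces the accessed index to be strictly larger, this sequence is strictly increasing, and I will adopt the convention $I_t:=I_p$ for $t>p$, so that $I_t\le I_{t+1}$ holds for every $t\ge 0$. The base case $t=0$ is immediate: $\WReach_0[u]=\{u\}$ and $u\in B_0$, so $i=0=I_0$.

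For the inductive step I fix $t$ with $t+1\le d$, assume the statement for $t$, and take $w\in\WReach_{t+1}[u]$ with $w\in B_i$. If $i\le 0$ then $i\le I_0\le I_{t+1}$, so I may assume $i\ge 1$, and in particular $w\ne u$. I fix a weak reachability path $P\colon u=p_0,p_1,\ldots,p_m=w$ of length $m\le t+1$. Each vertex of $P$ is $\pgeq w$, hence has a disc of radius at most $r(D(w))<d^{3i+3}$, so it lies in $\bigcup_{k\le i}B_k$. If $m\le t$ then $w\in\WReach_t[u]$ and the inductive hypothesis gives $i\le I_t\le I_{t+1}$; so I may assume $m=t+1$.

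The main idea is then the following. Let $j^\star$ be the largest bucket index occurring among $p_0,\ldots,p_{m-1}$ --- well defined and nonnegative since $p_0=u\in B_0$ --- and let $s_1$ be the first position with $p_{s_1}\in B_{j^\star}$; then $s_1\le m-1=t$ and $j^\star\le i$. The key point is that the prefix $p_0,\ldots,p_{s_1}$ is itself a weak reachability path, now from $u$ to $p_{s_1}$: each $p_s$ with $s<s_1$ lies in a bucket of index strictly below $j^\star$, hence has radius less than $d^{3j^\star}\le r(D(p_{s_1}))$ and is therefore $\pgeq p_{s_1}$; and $p_{s_1}\pleq u$, since either $j^\star\ge 1$ (so $r(D(p_{s_1}))\ge d^3>1$) or $j^\star=0$, in which case $s_1=0$ and $p_{s_1}=u$. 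Hence $p_{s_1}\in\WReach_{s_1}[u]\subseteq\WReach_t[u]$, and the inductive hypothesis yields $j^\star\le I_t$. If $j^\star=i$ we are done. Otherwise $j^\star\le i-1$, so every vertex of $P$ other than $w$ lies in $\bigcup_{k\le j^\star}B_k\subseteq\bigcup_{k\le I_t}B_k$ while $w\in B_i$; if in addition $i\le I_t$ we are done, and if $i>I_t$ then $P$ is a weak reachability path from $u$ to $B_i$ of length at most $d$ with all its vertices in $B_i\cup\bigcup_{k\le I_t}B_k$, so $i$ is accessible from $I_t$. Since then some index is accessible from $I_t$, we have $t<p$, the index $I_{t+1}$ is genuinely the maximum index accessible from $I_t$, and therefore $i\le I_{t+1}$, completing the induction.

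The one delicate point, and the step I would take most care over, is that a prefix of a weak reachability path need not be a weak reachability path --- the shared endpoint must be $\pleq$ every earlier vertex of the prefix, which is not automatic. The argument handles this by cutting $P$ at the first time it enters the \emph{largest} bucket reached by $p_0,\ldots,p_{m-1}$; that endpoint then has the largest disc among those vertices, which is precisely what is needed to make the prefix a valid weak reachability path. The only other bookkeeping concerns the terminating index $p$ of the greedy construction, and it is harmless, because the single branch of the argument that needs $I_{t+1}$ to exist is precisely the branch that exhibits an index accessible from $I_t$.
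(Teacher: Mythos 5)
Your proof is correct and takes essentially the same route as the paper's: induction on $t$, truncating the witnessing path at a non-terminal vertex lying in the largest bucket so that the prefix is itself a weak reachability path, applying the induction hypothesis to that prefix, and then using the definition of accessibility together with the greedy maximality of $i_{t+1}$. The differences are only bookkeeping --- your explicit convention for $t>p$ and your choice of cut vertex (the first vertex of the maximal bucket, verified via radii, rather than the $\pleq$-extremal vertex of $V(P)\setminus\{w\}$) --- and both are sound.
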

\begin{proof}
 We proceed by induction on $t$, with the base case for $t=0$ being trivial. Assume then that $t\geq 1$.
 Let $P$ be a weak reachability path witnessing that $w\in \WReach_t[u]$. Let $w'$ be the $\pleq$-maximum vertex among $V(P)\setminus \{w\}$, and let $i'$ be such that $w'\in B_{i'}$. The prefix of $P$ from $u$ to $w'$ witnesses that $w'\in \WReach_{t-1}[u]$. By induction, we have $i'\leq i_{t-1}$. Note that either $i=i'$, or $P$ witnesses that $i$ is accessible from~$i'$. Together with $i'\leq i_{t-1}$ this implies that $i\leq i_t$.
\end{proof}

From \cref{lem:major-greedy-dist} we can immediately infer the following.

\begin{lemma}\label{lem:major-steps}
 It holds that $p\leq d$.
\end{lemma}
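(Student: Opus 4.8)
The plan is to derive Lemma~\ref{lem:major-steps} as a quick consequence of Lemma~\ref{lem:major-greedy-dist} together with the greedy construction of the sequence $i_0,i_1,\ldots,i_p$. The key point is that the construction of the $i_t$'s is \emph{monotone and non-stalling}: each $i_{t+1}$ is defined to be the maximum index accessible from $i_t$, and since we always have the trivial possibility of continuing, the only way the construction terminates at step $p$ is if $i_p$ itself is not accessible from $i_{p-1}$ --- but more importantly, as long as there \emph{are} vertices of $W$ in higher buckets, the construction keeps making progress.

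\medskip

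\begin{proof}
 Suppose towards a contradiction that $p > d$, so in particular the indices $i_0, i_1, \ldots, i_{d+1}$ are all defined. First, observe that the sequence $i_0 < i_1 < i_2 < \cdots$ is strictly increasing: by definition, each index $i_{t+1}$ accessible from $i_t$ satisfies $i_{t+1} > i_t$ (the notion of accessibility is only defined for pairs $j > i$). In particular $i_{d+1} \geq d+1 > d$, and since $i_{d+1}$ is accessible from $i_d$, there is a vertex $w$ of $B_{i_{d+1}}$ together with a weak reachability path $P$ of length at most $d$ from $u$ to $w$; this path $P$ witnesses that $w \in \WReach_d^{G,\pleq}[u] = W$. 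Applying \cref{lem:major-greedy-dist} with $t = d$ to this vertex $w$, which lies in bucket $B_{i_{d+1}}$, we conclude that $i_{d+1} \leq i_d$. This contradicts $i_{d+1} > i_d$, and the proof is complete.
\end{proof}

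\medskip

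I do not expect any genuine obstacle here: the entire content has already been packed into Lemma~\ref{lem:major-greedy-dist}, and Lemma~\ref{lem:major-steps} is essentially just the contrapositive, unwound through the observation that each accessibility step strictly increases the index. The only small care needed is to notice that ``$i_{d+1}$ is defined'' already exhibits a vertex of $W$ sitting in a bucket of index exceeding $d$, which is exactly the hypothesis that \cref{lem:major-greedy-dist} forbids for reachability radius $d$. If one prefers to avoid the contradiction phrasing, one can equivalently argue directly: whenever $i_{t+1}$ is defined, the witnessing endpoint lies in $\WReach_{\le d}[u]$, and by \cref{lem:major-greedy-dist} its bucket index is at most $i_{\min(t+1,d)}$, forcing $t+1 \le d$.
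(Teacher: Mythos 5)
Your proof is correct and follows essentially the same route as the paper: the paper also assumes $p>d$, takes the witness $w\in B_{i_p}\cap W$ supplied by the definition of accessibility, applies \cref{lem:major-greedy-dist} with $t=d$ to get a bound contradicting strict monotonicity of $i_0<i_1<\cdots$. Your only cosmetic deviation is arguing with $i_{d+1}$ instead of $i_p$, which changes nothing of substance.
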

\begin{proof}
 Suppose otherwise, that $p>d$.
 By the definition of $i_p$, there exists $w\in B_{i_p}\cap W=B_{i_p}\cap \WReach_d[u]$. Then \cref{lem:major-greedy-dist} applied to $w$ implies that $i_p\leq i_d$. But $i_0,i_1,i_2,\ldots$ is a strictly increasing sequence, a contradiction.
\end{proof}

For a fixed $t\in \{0,1,\ldots,p-1\}$, we define indices $i_{t,0},i_{t,1},\ldots,i_{t,p_t}$ similarly as before:
\begin{itemize}[nosep]
 \item $i_{t,0}=i_t$, and
 \item for $s\geq 0$, $i_{t,s+1}$ is the maximum index that is accessible from $i_{t,s}$ and smaller than $i_{t+1}$. In case there is no such index, the construction finishes without defining $i_{t,s+1}$; that is, we set $p_t\coloneqq s$.
\end{itemize}
We observe the following.

\begin{lemma}\label{lem:minor-greedy-dist}
 Let $w\in \WReach_r[u]$ for some $0\leq r\leq d$, and let $i$ be such that $w\in B_i$.
 Suppose that $i_t\leq i <i_{t+1}$ for some $t\in \{0,1,\ldots,p-1\}$. Then $i\leq i_{t,r-t}$.
\end{lemma}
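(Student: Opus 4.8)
The plan is to mimic the proof of \cref{lem:major-greedy-dist}, but now running the induction on the parameter $r-t$ (the ``minor step budget'') rather than on the raw path length, and using the inner greedy sequence $i_{t,0},i_{t,1},\ldots,i_{t,p_t}$ in place of the outer one. First I would set up the statement carefully: we are given $w\in\WReach_r[u]$ with $w\in B_i$ and $i_t\le i<i_{t+1}$, and we want $i\le i_{t,r-t}$. Note that, by \cref{lem:major-greedy-dist}, the hypothesis $i<i_{t+1}$ combined with $w\in\WReach_r[u]$ already forces $r\ge t+1$ unless $i=i_t$ (if $r\le t$ then $i\le i_r\le i_t$, so $i=i_t=i_{t,0}=i_{t,r-t}$ when $r=t$, and more generally $i\le i_t = i_{t,0}$, which is $\le i_{t,r-t}$ since the inner sequence is nondecreasing — so the case $r\le t$ is immediate). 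Hence the interesting case is $r\ge t+1$, where $r-t\ge 1$, and I would induct on $r-t$.

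For the inductive step, take a minimum-length weak reachability path $P$ from $u$ to $w$; it has length at most $r$. Let $w'$ be the $\pleq$-maximum vertex of $V(P)\setminus\{w\}$ and let $i'$ be its bucket index, so the prefix of $P$ up to $w'$ witnesses $w'\in\WReach_{r-1}[u]$. By \cref{lem:major-greedy-dist} applied to $w'$ we get $i'\le i_{r-1}$; combined with the definition of the outer sequence and the fact that $i<i_{t+1}$, one sees $i'<i_{t+1}$ as well (since every vertex on $P$ other than $w$ lies in a bucket of index $\le i'$, and if $i'\ge i_{t+1}$ then $w'$ itself would already give $i\ge$ something $\ge i_{t+1}$ — more carefully: $w$ is reached from $w'$ by a sub-path inside $B_i\cup\bigcup_{k\le i'}B_k$, so $i$ is accessible from $i'$, and $i<i_{t+1}$ forces, via the definition of $i_{t+1}$ as the maximum index accessible from $i_t$, that $i'$ cannot be $\ge i_{t+1}$). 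Now split into two cases on where $i'$ sits. If $i'<i_t$, then actually $i'\le i_{t'}$ for some $t'<t$ by \cref{lem:major-greedy-dist}, and I would instead appeal to the outer bound or directly observe $i\le i_t=i_{t,0}\le i_{t,r-t}$; in fact the cleanest route is to note that in this case the whole path $P$ can be rerouted so that $i$ is accessible from $i_t$ itself, giving $i\le i_{t,1}\le i_{t,r-t}$ (using $r-t\ge 1$). The main case is $i_t\le i'<i_{t+1}$: here the prefix of $P$ witnesses $w'\in\WReach_{r-1}[u]$ with $i'$ in the right window, so by the induction hypothesis (budget $(r-1)-t$) we get $i'\le i_{t,(r-1)-t}=i_{t,r-t-1}$. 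Finally, $P$ witnesses that $i$ is accessible from $i'$ (all vertices of $P$ except $w$ lie in $\bigcup_{k\le i'}B_k$, and $w\in B_i$), and $i<i_{t+1}$, so by the definition of $i_{t,s+1}$ as the maximum index accessible from $i_{t,s}$ and smaller than $i_{t+1}$, applied with $s=r-t-1$, we conclude $i\le i_{t,r-t}$, as desired.

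The step I expect to need the most care is the bookkeeping around the case $i'<i_t$ and the claim ``either $i=i'$ or $i$ is accessible from $i'$'': one must verify that trimming $P$ to land exactly in $B_i$ after leaving the low buckets does give a path certifying accessibility in the precise sense of the definition (start at $u$, end in $B_i$ — wait, accessibility is defined for the target bucket index, so here it is accessibility of $i$ from $i'$, requiring $V(P)\subseteq B_i\cup\bigcup_{k\le i'}B_k$, which holds since $w'$ was chosen $\pleq$-maximal among $V(P)\setminus\{w\}$ and $i'$ is its bucket, so every other vertex has radius $<d^{3i'+3}$, hence lies in some $B_k$ with $k\le i'$). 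I would also double-check the boundary instances $r=t$ and $r=t+1$ explicitly so that the induction is anchored correctly, and make sure the ``nondecreasing inner sequence'' fact ($i_{t,0}\le i_{t,1}\le\cdots$, indeed strictly increasing while defined) is invoked so that $i_{t,r-t-1}\le i_{t,r-t}$ whenever both are defined — and argue that if $i_{t,r-t}$ is undefined (i.e. $r-t>p_t$) then no index is accessible from $i_{t,p_t}$ smaller than $i_{t+1}$, which together with $i<i_{t+1}$ and $i$ accessible from $i_{t,p_t}$ (after chaining the previous steps) yields a contradiction, so this situation does not arise.
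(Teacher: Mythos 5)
Your overall plan is exactly the paper's: the paper proves this lemma by induction on $r-t$, with the base case $r=t$ forcing $i=i_t=i_{t,0}$, and an induction step declared ``essentially identical'' to that of \cref{lem:major-greedy-dist} --- take a witnessing path $P$, let $w'$ be the bucket-dominating vertex of $V(P)\setminus\{w\}$ with bucket $i'$, apply the induction hypothesis to the prefix ending at $w'$, and then combine ``$P$ witnesses that $i$ is accessible from $i'$'' with the greedy maximality in the definition of $i_{t,s+1}$. Your main case ($i_t\le i'<i_{t+1}$), your treatment of the case $i'<i_t$ via monotonicity of accessibility (a path witnessing accessibility from $i'$ also witnesses it from $i_t=i_{t,0}$, giving $i\le i_{t,1}\le i_{t,r-t}$), and your closing remark about why the needed $i_{t,s}$ cannot be undefined are all in the spirit of the details the paper leaves to the reader.

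Two local justifications need repair, though neither threatens the structure. First, your argument for $i'<i_{t+1}$ is a non sequitur: from ``$i$ is accessible from $i'$'' and ``$i_{t+1}$ is the maximum index accessible from $i_t$'' nothing follows about $i'$, since the maximality of $i_{t+1}$ constrains targets accessible \emph{from} $i_t$, not possible sources. The correct (and much simpler) reason is that $P$ is a weak reachability path ending at $w$, so every vertex of $P$ is $\succeq w$ and hence has radius at most that of $D(w)$; therefore every vertex of $P$ lies in a bucket of index at most $i$, which gives $i'\le i<i_{t+1}$ outright and also makes your case split exhaustive. Second, in the case $i'<i_t$ your first suggestion, ``directly observe $i\le i_t$,'' is false: the hypothesis is $i\ge i_t$, and $i$ may well exceed $i_t$; only your second route (the same path shows $i$ is accessible from $i_t$, so $i\le i_{t,1}$ by definition of the inner sequence, using $i<i_{t+1}$ and $r-t\ge 1$) is valid, so drop the first. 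With these two fixes your write-up is a complete version of the induction step the paper has in mind.
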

\begin{proof}
 Note that \cref{lem:major-greedy-dist} implies that $r\geq t$. Moreover, if $r=t$ then we necessarily have $i=i_t=i_{t,0}$. This establishes the base case for induction on $r-t$. The induction step is essentially identical to the one from the proof of 
 \cref{lem:major-greedy-dist}; we leave the details to the reader. 
\end{proof}

So similarly as in \cref{lem:major-steps}, we obtain the following.

\begin{lemma}\label{lem:minor-steps}
For every $t\in \{0,1,\ldots,p-1\}$, we have $p_t\leq d-t$.
\end{lemma}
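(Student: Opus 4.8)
The plan is to mimic the proof of \cref{lem:major-steps}, replacing the use of \cref{lem:major-greedy-dist} by \cref{lem:minor-greedy-dist}. Fix $t\in\{0,1,\ldots,p-1\}$ and suppose, toward a contradiction, that $p_t>d-t$. First I would record two easy bookkeeping facts. By \cref{lem:major-steps} we have $t\le p-1\le d-1$, hence $d-t\ge 1$; combined with the contradiction hypothesis $d-t<p_t$ this shows that $0\le d-t<p_t$, so the index $i_{t,d-t}$ is well-defined in the minor sequence, and also that $p_t\ge 2\ge 1$, so the step from $i_{t,p_t-1}$ to $i_{t,p_t}$ was genuinely performed in the construction.

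Next I would extract from the construction a vertex of $W$ sitting in the last minor bucket. Since $p_t\ge 1$, the index $i_{t,p_t}$ is (by definition) accessible from $i_{t,p_t-1}$, so there is a weak reachability path $P$ starting at $u$, of length at most $d$, whose other endpoint $w$ lies in $B_{i_{t,p_t}}$; in particular $w\in\WReach_d[u]=W$. Moreover the minor sequence $i_{t,0}<i_{t,1}<\cdots<i_{t,p_t}$ is \emph{strictly} increasing, because accessibility of $j$ from $i$ requires $j>i$, and every term $i_{t,s}$ with $s\ge 1$ is by construction smaller than $i_{t+1}$. Hence $i_t=i_{t,0}\le i_{t,p_t}<i_{t+1}$, which is exactly the range of bucket indices in which \cref{lem:minor-greedy-dist} can be applied.

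Finally I would apply \cref{lem:minor-greedy-dist} to $w$ with $r=d$: since $i_t\le i_{t,p_t}<i_{t+1}$, the lemma yields $i_{t,p_t}\le i_{t,d-t}$. On the other hand, strict monotonicity of the minor sequence together with $d-t<p_t$ gives $i_{t,d-t}<i_{t,p_t}$, a contradiction. Therefore $p_t\le d-t$, as claimed.

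I do not expect any real obstacle: the statement is a purely structural consequence of \cref{lem:minor-greedy-dist}, perfectly parallel to how \cref{lem:major-steps} follows from \cref{lem:major-greedy-dist}. The only points that need a moment of care are (i) verifying that $i_{t,d-t}$ is defined, which is exactly why one first deduces $0\le d-t<p_t$ from \cref{lem:major-steps} and the contradiction hypothesis, and (ii) observing that the endpoint of the path witnessing accessibility of $i_{t,p_t}$ indeed lies in $W$, which is immediate since that path is a weak reachability path from $u$ of length at most $d$.
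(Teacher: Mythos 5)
Your argument is correct and is exactly the proof the paper intends: it mirrors the proof of \cref{lem:major-steps}, extracting a vertex $w\in W\cap B_{i_{t,p_t}}$ from the last accessibility step, checking $i_t\le i_{t,p_t}<i_{t+1}$, and applying \cref{lem:minor-greedy-dist} with $r=d$ to contradict the strict monotonicity of the minor sequence. The bookkeeping points you flag (well-definedness of $i_{t,d-t}$ via $d-t<p_t$ and $t\le p-1\le d-1$, and membership of $w$ in $W$) are handled properly, so nothing is missing.
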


Finally, we can use \cref{lem:jumps-empty} to argue the following.

\begin{lemma}\label{lem:twice-covered}
 Let $t\in \{0,1,\ldots,p-1\}$ and $s\in \{0,1,\ldots,p_t-1\}$. Then there is at most one index $i$ such that
 $$i_{t,s}<i<i_{t,s+1}\qquad\textrm{and}\qquad B_i\cap W\neq \emptyset.$$
\end{lemma}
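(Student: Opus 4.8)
The plan is to derive this directly from \cref{lem:jumps-empty}, applied with the base index $i := i_{t,s}$ and with the two indices accessible from it taken to be $i_{t,s+1}$ and $i_{t+1}$. The first of these is accessible from $i_{t,s}$ by the very definition of the minor greedy construction. For the second, I would first record an easy monotonicity property of accessibility: if $j$ is accessible from $i$, witnessed by a weak reachability path $P$ with $V(P)\subseteq B_j\cup\bigcup_{k\leq i}B_k$, then the same path witnesses that $j$ is accessible from any $i'$ with $i\leq i'<j$, since $\bigcup_{k\leq i}B_k\subseteq\bigcup_{k\leq i'}B_k$. By the definition of the major greedy construction, $i_{t+1}$ is accessible from $i_t=i_{t,0}$; and since $i_{t,0}\leq i_{t,s}<i_{t,s+1}<i_{t+1}$ (here I use that $s\leq p_t-1$ guarantees $i_{t,s+1}$ is defined, hence lies strictly between $i_{t,s}$ and $i_{t+1}$, and that the minor sequence is strictly increasing from $i_{t,0}=i_t$), monotonicity gives that $i_{t+1}$ is accessible from $i_{t,s}$ as well.

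Now I apply \cref{lem:jumps-empty} with $i=i_{t,s}$, $j=i_{t,s+1}$ and $j'=i_{t+1}$, noting $i\geq 0$ and $j<j'$. It yields $B_r\cap W=\emptyset$ for every $r\in\{i_{t,s}+2,i_{t,s}+3,\ldots,i_{t,s+1}-1\}$. Consequently the only index $i$ with $i_{t,s}<i<i_{t,s+1}$ for which $B_i\cap W$ can be nonempty is $i=i_{t,s}+1$, which is exactly the claim. If instead $i_{t,s+1}\leq i_{t,s}+2$ then there is at most one index strictly between $i_{t,s}$ and $i_{t,s+1}$ to begin with, so the statement holds trivially and \cref{lem:jumps-empty} is not even invoked; this matches the vacuous case noted inside its proof.

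I do not expect a real obstacle here: all the geometric content has been absorbed into \cref{lem:jumps-empty}, and what remains is bookkeeping of the two-level greedy indices. The only point requiring a moment's care is verifying the chain $i_t\leq i_{t,s}<i_{t,s+1}<i_{t+1}$, so that $i_{t+1}$ genuinely exceeds $i_{t,s}$ and the monotonicity of accessibility can legitimately be invoked; this is immediate from the definitions of the major and minor constructions together with the hypotheses $t\leq p-1$ and $s\leq p_t-1$.
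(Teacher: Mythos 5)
Your proof is correct and follows essentially the same route as the paper: both apply \cref{lem:jumps-empty} with $i=i_{t,s}$, $j=i_{t,s+1}$, $j'=i_{t+1}$. The only difference is that you spell out the monotonicity-of-accessibility step showing $i_{t+1}$ is accessible from $i_{t,s}$ (not just from $i_t$), which the paper compresses into the phrase ``by construction.''
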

\begin{proof}
 By construction, $i_{t+1}$ and $i_{t,s+1}$ are two different indices that are both accessible from $i_{t,s}$. Then \cref{lem:jumps-empty} implies that every index $i$ satisfying the condition in the lemma statement must be equal to $i_{t,s}+1$, so there can be at most one such index.
\end{proof}

We can now conclude the proof of \cref{lem:few-buckets}.

\begin{proof}[Proof of \cref{lem:few-buckets}]
 Let $L$ be the set of all indices $i$ satisfying $B_i\cap W\neq \emptyset$. By construction, we have
 $$L\subseteq \{i_0,i_1,\ldots,i_p\}\cup \bigcup_{t\in \{0,1,\ldots,p-1\}} \{i_t+1,i_t+2,\ldots,i_{t,p_t}\}.$$
 By \cref{lem:minor-steps} and \cref{lem:twice-covered}, for each $t\in \{0,1\ldots,p-1\}$ we have
 $$|L\cap \{i_t+1,i_t+2,\ldots,i_{t,p_t}\}|\leq 2p_t\leq 2(d-t).$$
 So by \cref{lem:major-steps}, we conclude that
 $$|L|\leq (d+1)+2\sum_{t=0}^{d-1} (d-t)=(d+1)+d(d+1)=(d+1)^2.\qedhere$$
\end{proof}

As argued, \cref{lem:bucket-small} together with \cref{lem:few-buckets} prove \cref{thm:wcol-ub}.

\section{Lower bounds for Koebe orderings}\label{sec:lower-bounds}

In this section we discuss some lower bounds for generalized coloring numbers of Koebe orderings of planar graphs.
We start with a very simple lower bound that witnesses the tightness of \cref{thm:scol-ub}. The construction can be considered folklore, so we include it for completeness and because it will be used as a building block for a later construction.
 
 \begin{figure}[ht]
 \centering
  \includegraphics[width=0.4\textwidth]{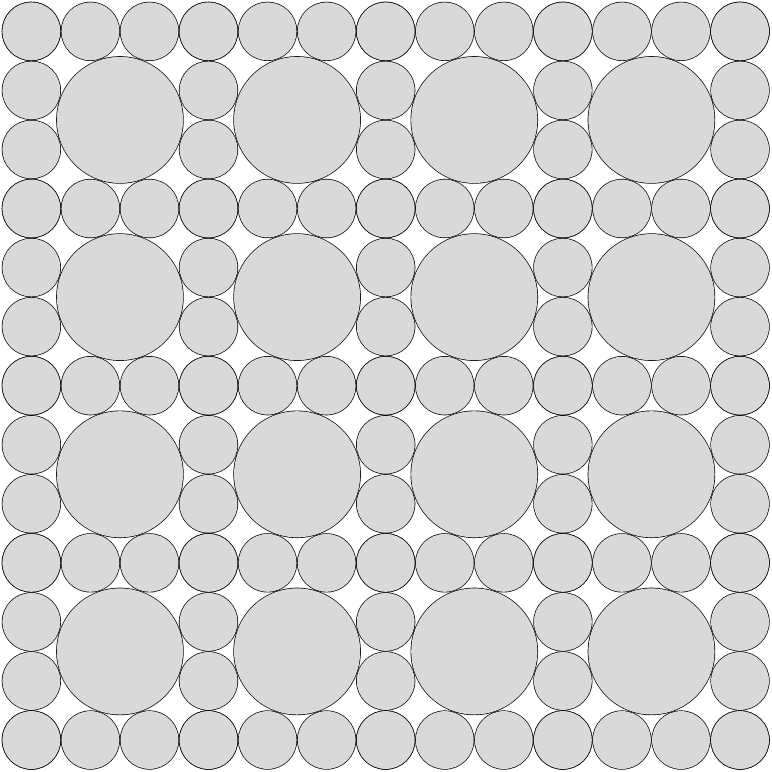}
  \caption{The construction of \cref{prop:scol-lb} for $d=26$. Every small disc has radius $1$, every large disc has radius $\sqrt{10}-1$.}\label{fig:grid}
 \end{figure}

\begin{proposition}\label{prop:scol-lb}
 For every $d\in \N$ there exists a planar graph $G$ and a coin model $D(\cdot)$ for $G$ such that for any Koebe ordering $\pleq$ of $G$ constructed with respect to $D(\cdot)$ we have $\scol_d(G,\pleq)\geq \Omega(d^2)$.
\end{proposition}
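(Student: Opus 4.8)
The plan is to exhibit a concrete coin model whose associated Koebe ordering is essentially forced to have a large strong reachability set, and the natural candidate — as suggested by \cref{fig:grid} — is a grid-like arrangement. Concretely, I would take a $k\times k$ grid of ``large'' discs, each of radius $\sqrt{10}-1$, with centers on a lattice of spacing $2(\sqrt{10}-1)$ so that horizontally/vertically adjacent large discs touch; then in each ``gap'' between four mutually close large discs I would place one ``small'' disc of radius $1$, positioned so that it touches all four surrounding large discs (the number $\sqrt{10}$ is exactly what makes a unit disc fit tangentially into that pocket: the center-to-center distance from a small disc at a pocket to a neighbouring large disc center is $\sqrt{1^2+3^2}=\sqrt{10}$ after rescaling, which equals $1+(\sqrt{10}-1)$). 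This gives a planar graph $G$ — a coin model automatically certifies planarity — with roughly $k^2$ large vertices and roughly $k^2$ small vertices, and I set $k=\Theta(d)$.

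The key point is that any Koebe ordering $\pleq$ built from this model must put \emph{all} the large discs before \emph{all} the small discs, since large discs have strictly larger radius. Now fix one small disc $u$ sitting at a pocket surrounded by large discs. I want to show that $\Omega(d^2)$ large vertices are strongly $d$-reachable from $u$. For this I would argue that from $u$ one can walk, through a path of large discs only, to any large disc within grid-distance roughly $d/2$ of $u$'s pocket: a path in the coin model that goes $u \to (\text{large disc}) \to (\text{large disc}) \to \cdots \to (\text{target large disc})$, where every intermediate disc is a large disc and hence comes \emph{after} $u$ in the ordering (so this is a valid strong reachability path), ending at a large disc $v$, which comes \emph{before} $u$, so $v$ is strongly reachable. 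Since a path of $\le d$ steps through touching large discs on the grid reaches every large disc within $\ell_1$-grid-distance $\lfloor d/2\rfloor$ or so, and there are $\Theta(d^2)$ such discs (as long as $u$ is taken near the centre of a grid of side $\Theta(d)$, so boundary effects don't bite), we get $|\SReach_d[u]| \ge \Omega(d^2)$.

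The main obstacle — really the only delicate point — is bookkeeping the constants and the exact geometry so that (i) the claimed tangencies genuinely hold (hence the edges are present in $G$), (ii) a strong reachability path of length at most $d$ indeed reaches $\Omega(d^2)$ large discs, which forces a clean count of lattice points within a given $\ell_1$-ball and a careful choice of which pocket $u$ occupies so it is far from the boundary, and (iii) the first step $u\to(\text{large disc})$ and each subsequent step are of length $1$ so that the total length stays $\le d$. None of this is deep; it is just a matter of fixing $k=\Theta(d)$, placing $u$ at the central pocket, and observing that the large discs within grid-distance $\lfloor (d-1)/2\rfloor$ of that pocket number $\Omega(d^2)$ and are each reachable by a monotone staircase path through large discs. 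I would also note that the same grid will be reused, per the remark before the statement, as a gadget in the later $\Omega(d^3)$ weak-coloring-number lower bound, so it is worth stating the construction in a slightly modular way.
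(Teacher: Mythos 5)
There is a genuine error in the reachability argument, and it stems from inverting the ordering convention. You correctly state that any Koebe ordering puts all large discs \emph{before} all small discs (larger radius means earlier in $\pleq$), but two sentences later you claim that the intermediate large discs on your path ``come after $u$ in the ordering''. They do not: they are all strictly smaller than the small disc $u$ in $\pleq$. By the definition of strong reachability, every internal vertex of a strong reachability path starting at $u$ must be strictly \emph{larger} than $u$ in $\pleq$, so a path from $u$ whose second vertex is a large disc must terminate there. Consequently, in your construction (a lattice of large discs with isolated unit discs in the pockets, which touch only large discs and not each other), a small starting vertex strongly $d$-reaches at most its four large neighbours, and a large starting vertex can only continue through large discs that happen to be later in the tie-broken order --- which the proposition does not let you control, since the bound must hold for \emph{every} Koebe ordering of the model and equal-radius discs may be ordered adversarially (e.g.\ a row-by-row order of the large discs yields only $\Oh(d)$ strong reachability). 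So the construction as argued cannot give $\Omega(d^2)$.

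The fix is to reverse the roles of the two sizes, which is exactly what the paper's construction does: the ambient $d/2\times d/2$ grid consists of \emph{unit} (small) discs, and $\Omega(d^2)$ \emph{larger} discs of radius $\sqrt{10}-1$ are embedded by replacing selected $2\times 2$ blocks of the unit grid (this is also where the value $\sqrt{10}=\sqrt{1^2+3^2}$ comes from; in your arrangement, with large discs at spacing $2(\sqrt{10}-1)$ and unit discs in the pockets, the tangency radius would instead be $1+\sqrt{2}$, so the stated tangencies do not hold either). Then one takes $u$ to be the $\pleq$-minimal \emph{small} disc: all other small discs are larger than $u$ in $\pleq$ and form a connected network spanning the grid, so they may serve as internal vertices, and every large disc (all of which precede $u$) is the endpoint of such a path of length at most $d$. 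This yields $|\SReach_d[u]|\geq \Omega(d^2)$ for every Koebe ordering of the model, independently of how ties are broken.
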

\begin{proof}
 Without loss of generality assume that $d$ is congruent to $2$ modulo $12$. Consider the intersection graph $G$ of discs arranged as in~\cref{fig:grid}: it is a $d/2\times d/2$ grid of unit-radius discs, where all quadruples of discs with adjacent indices congruent to $2$ or $0$ in the natural indexing are replaced with single discs of radius $\sqrt{10}-1$.

 Let $\pleq$ be any Koebe ordering of $G$ constructed for the particular coin model described above. Note that $\pleq$ places all small discs (those of radius $1$) after all large discs (those of radius $\sqrt{10}-1$). Therefore, if $D$ is a small disc that is the smallest in $\pleq$, then it is easy to see that every large disc is strongly $d$-reachable from $D$. Since the number of large discs is $\Omega(d^2)$, it follows that $\scol_d(G,\pleq)\geq \Omega(d^2)$
\end{proof}

Next, we provide a lower bound showing that in \cref{thm:wcol-ub} one cannot obtain a better bound than cubic in $d$.\footnote{We note that we were informed by Piotr Micek~\cite{micek-private} that an asymptotically same lower bound can be obtained when considering Koebe orderings of natural coin models of {\em{stacked triangulations}} (or {\em{Apollonian networks}}), but this example seems somewhat harder to analyze formally.}

 \begin{figure}[ht]
 \centering
  \includegraphics[width=0.4\textwidth]{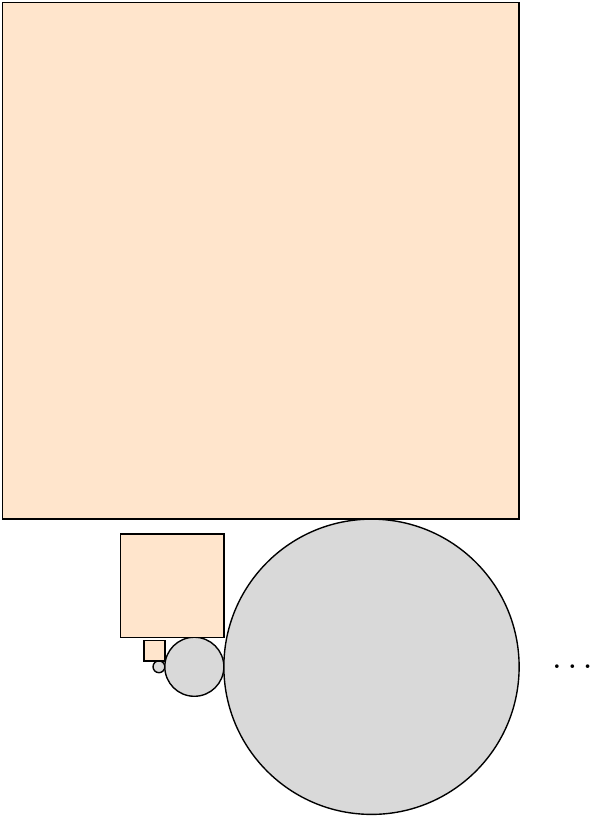}
  \caption{The construction of \cref{prop:wcol-lb}. Every orange box is an appropriately scaled construction from \cref{fig:grid}. For clarity of presentation the figure is somewhat not to scale: every orange box should be exactly $d/4$ times wider than the grey disc it is tangent to.}\label{fig:multigrid}
 \end{figure}

\begin{proposition}\label{prop:wcol-lb}
 For every $d\in \N$ there exists a planar graph $G$ and a coin model $D(\cdot)$ for $G$ such that for any Koebe ordering $\pleq$ of $G$ constructed with respect to $D(\cdot)$ we have $\wcol_d(G,\pleq)\geq \Omega(d^3)$.
\end{proposition}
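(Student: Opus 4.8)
The plan is to build on the construction of \cref{prop:scol-lb}, nesting copies of the $\Theta(d^2)$-grid gadget along a long path in order to gain an extra factor of $d$. Concretely, I would take a path $v_0, v_1, \ldots, v_m$ with $m = \Theta(d)$, realized in the coin model by a chain of discs of geometrically growing radii: disc $D(v_k)$ has radius roughly $c^k$ for a suitable constant $c$ depending on $d$ (say $c \approx d/4$, matching the scaling hint in \cref{fig:multigrid} that each orange box is $d/4$ times wider than the grey disc it touches). To each path-disc $D(v_k)$ I would attach, tangentially, a scaled copy $\Gamma_k$ of the grid gadget from \cref{fig:grid}, scaled so that its unit discs have radius comparable to $c^k$ and its large discs have radius $\Theta(c^k)$; the gadget $\Gamma_k$ contributes $\Omega(d^2)$ ``large'' discs, all of radius $\Theta(c^k)$, all tangent to $D(v_k)$ and mutually arranged as in the grid. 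The geometric growth guarantees that discs belonging to different gadgets (and different path-discs) have radii separated by large multiplicative factors, so all the gadget discs of $\Gamma_0$ are strictly smaller than every disc of $\Gamma_1$, which are strictly smaller than those of $\Gamma_2$, and so on; hence in \emph{any} Koebe ordering $\pleq$ built from this model, the vertices are sorted gadget-by-gadget, $\Gamma_0$ last, then $v_0$, then $\Gamma_1$, then $v_1$, etc.

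The next step is to identify a vertex $u$ with large weak reachability set. I would take $u$ to be the $\pleq$-minimum vertex, which lies in the lowest gadget $\Gamma_0$ (a smallest unit disc there). The claim is that for every index $k \in \{0, 1, \ldots, \Theta(d)\}$, every large disc of gadget $\Gamma_k$ is weakly $d$-reachable from $u$: one walks from $u$ through $\Gamma_0$ to $v_0$ (using $O(d)$ steps within the grid, as in \cref{prop:scol-lb}), then hops $v_0 v_1 v_2 \cdots v_k$ along the path ($k \le O(d)$ steps), then enters $\Gamma_k$ and reaches the target large disc in a constant number of further steps. Crucially this is a \emph{weak} reachability path: it must only avoid vertices that are $\pleq$-smaller than the target. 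Since every vertex used on the path lies in $\Gamma_0 \cup \{v_0,\ldots,v_k\} \cup \Gamma_k$, and — by the radius separation — all of $\{v_0,\ldots,v_{k-1}\}$ and all of $\Gamma_0$ are $\pleq$-smaller than the target large disc in $\Gamma_k$ while everything else used is $\pleq$-not-smaller, the weak reachability condition could fail only because of the $\Gamma_0$-portion; but I would argue that the target large disc of $\Gamma_k$ is $\pleq$-larger than (i.e., comes after) nothing that we need it to precede — more carefully, one should re-examine that the path's initial segment through $\Gamma_0$ only revisits $\Gamma_0$-vertices, all of which \emph{are} allowed to be $\pleq$-smaller than the target since weak reachability permits arbitrarily small internal vertices as long as they are $\succeq$ the \emph{endpoint}; wait, that is backwards, so the subtle point is the opposite — in a weak reachability path from $u$ to $w$ all vertices must be $\succeq w$, so the $\Gamma_0$ vertices, being $\pleq$-smallest, would be \emph{below} $w$ and thus forbidden. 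This is exactly the obstacle (see below), and the fix is to route differently: take $u$ instead to be a vertex \emph{not} in $\Gamma_0$ but rather a smallest unit disc attached near $v_0$ at the bottom of the path, chosen so that the only vertices $\pleq$-below each target are ones the path genuinely avoids; equivalently, reverse the roles so that the target large discs are $\pleq$-large and the path from $u$ climbs monotonically in $\pleq$, which is automatic once $u$ is the global minimum and we never descend. Counting: there are $\Theta(d)$ gadgets, each contributing $\Omega(d^2)$ weakly $d$-reachable large discs, and these sets are pairwise disjoint (distinct radii), so $|\WReach_d^{G,\pleq}[u]| \ge \Theta(d) \cdot \Omega(d^2) = \Omega(d^3)$, as desired.

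The step I expect to be the main obstacle is precisely verifying the weak-reachability condition for the long paths, i.e., confirming that the path from $u$ up to a large disc of $\Gamma_k$ uses only vertices that are $\pleq$-at-least the target. Because weak reachability forbids internal vertices \emph{smaller} than the endpoint, and the endpoint here is a \emph{large} (hence $\pleq$-early, i.e.\ small-in-$\pleq$) disc within its own gadget only relative to the unit discs of the \emph{same} gadget, one must be careful: within $\Gamma_k$ the large disc is $\pleq$-\emph{before} the unit discs of $\Gamma_k$, so a path entering $\Gamma_k$ through its unit discs would violate the condition. The resolution is to make each gadget's internal wiring so that $v_k$ is tangent \emph{directly} to the large discs of $\Gamma_k$ (not only to unit discs), so the path can step $v_{k-1} \to v_k \to (\text{large disc of }\Gamma_k)$ without ever touching a $\Gamma_k$ unit disc; and symmetrically the initial segment must leave $u$ and reach $v_0$ without passing through vertices $\pleq$-smaller than the eventual targets, which holds because the targets are all $\pleq$-strictly-larger than everything in $\Gamma_0$ once we fix $u \in \Gamma_0$ to be $\pleq$-minimal and note the targets sit in $\Gamma_k$ with $k \ge 1$, strictly later in $\pleq$. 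Making all these tangencies and orderings consistent simultaneously — a path-disc tangent to a whole grid-gadget's large discs, with the grid still planar and the radius ratios as prescribed — is the delicate geometric bookkeeping; the rest is the same area-free counting already used in \cref{prop:scol-lb}.
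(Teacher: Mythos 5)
Your overall construction is essentially the one in the paper: a chain of $\Theta(d)$ tangent ``interface'' discs with geometrically growing radii, each carrying a scaled copy of the grid gadget of \cref{prop:scol-lb} containing $\Omega(d^2)$ large discs, and a final count of $\Theta(d)\cdot\Omega(d^2)=\Omega(d^3)$ weakly $d$-reachable targets from a smallest disc at the bottom of the chain. However, there is a genuine confusion running through your verification: you have the direction of a Koebe ordering backwards. In a Koebe ordering the \emph{largest} discs come first (are $\pleq$-smallest), so the tiny discs of $\Gamma_0$ and the unit discs of $\Gamma_k$ are $\pleq$-\emph{late}, not $\pleq$-early. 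A weak reachability path ending at a target $w$ may use any vertex that is not smaller than $w$ in $\pleq$, i.e., any disc whose radius does not exceed that of $w$. Hence the ``main obstacle'' you identify --- that the path must not pass through $\Gamma_0$ or through the unit discs of $\Gamma_k$ --- does not exist: those discs are strictly smaller than the large target disc, hence strictly later in every Koebe ordering built from the model, hence perfectly admissible as internal vertices. This is exactly how the paper routes its paths: hop along the interfaces up to the $k$th gadget (all of radius at most the gadget-$k$ unit scale) and then walk through the \emph{small} discs of $\Gamma_k$'s grid to the large target; with grid side $d/4$ and $d/2$ gadgets the total length stays at most $d$. The same inversion appears in your choice of start vertex: a smallest disc is $\pleq$-\emph{maximal}, not ``$\pleq$-minimum'' --- being late in the ordering is precisely what you want, so that all the large targets lie before it.

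Because of this inversion, the ``fix'' you propose --- rewiring each gadget so that $v_k$ is tangent \emph{directly} to the large discs of $\Gamma_k$, so paths avoid its unit discs --- is both unnecessary and unworkable. A disc of radius $R$ can touch only $\Oh(R/\rho)$ pairwise internally-disjoint discs of radius at least $\rho$, so $v_k$ cannot be tangent to $\Omega(d^2)$ large discs of radius comparable to its own; and if you instead blow $v_k$ up by a factor of order $d^2$ to make room, then $v_k$ becomes larger than the targets, hence \emph{earlier} than them in the ordering, and is then genuinely forbidden as an internal vertex of a weak reachability path to them. There is also a minor budget issue in your original routing: starting inside $\Gamma_0$ and first crossing its grid can already consume $\Theta(d)$ edges; the paper avoids this by taking the start vertex to be the interface disc of the first gadget itself. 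Once the ordering direction is corrected, your original plan (route through the small discs of each gadget, no rewiring) is exactly the paper's proof.
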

\begin{proof}
 Without loss of generality we assume that $d$ is congruent to $4$ modulo $24$.
 
 We construct a planar graph $G$ by specifying its coin model $D(\cdot)$. Consider first the following construction of a {\em{gadget}}. First, apply the grid-like construction from \cref{fig:grid} where the ``grid'' has $d/4$ unit-radius discs along each side. Then, add one unit-radius disc that touches the disc in the bottom-right corner of the grid at its bottom-most point (i.e. the one with the lowest second coordinate). This additional disc will be called the {\em{interface}} of the gadget. In \cref{fig:multigrid}, every orange box together with the tangent blue disc represents a single gadget, where the blue disc is the interface of the gadget.

 Finally, construct $G$ together with its coin model $D(\cdot)$ by taking $d/2$ gadgets, where the $i$th gadget is scaled by a factor of $d^{i-1}$, and arranging them so that the interfaces form a horizontally aligned sequence of discs with increasing radii; see \cref{fig:multigrid}. It is easy to see that discs from different gadgets have pairwise non-intersecting interiors. Hence the intersection graph of the discs is a planar graph $G$, and the constructed discs form a coin model of $G$.
 
 Let $\pleq$ be any Koebe ordering of $G$ constructed with respect to the coin model described above.
 Let $D$ be the interface of the first gadget. Observe that for every $i\in \{1,\ldots,d/2\}$, every large disc within the grid in the $i$th gadget is weakly $d$-reachable from $D$ in $\pleq$. Indeed, it suffices to first pass from the first interface to the $i$th interface along a path of length $i-1$ consisting of consecutive interfaces, and then reach the considered large disc of the grid by a path of length at most $d/2$ whose internal vertices are small discs of the grid. Since the grid in every gadget contains $\Omega(d^2)$ large discs and there are $d/2$ gadgets, it follows that $D$ weakly $d$-reaches $\Omega(d^3)$ other discs. So $\wcol_d(G,\pleq)\geq \Omega(d^3)$.
\end{proof}

Let us analyze the construction of \cref{prop:wcol-lb} through the lenses of the proof of \cref{thm:wcol-ub}, where we consider $u$ to be the vertex corresponding to the interface of the first gadget. Then in the notation of the said proof, there are $\Theta(d)$ buckets $B_i$ satisfying $B_i\cap W\neq \emptyset$, and each of them contains $\Theta(d^2)$ vertices that are weakly $d$-reachable from $u$. Thus, the bound from \cref{lem:bucket-small} is almost tight --- up to a logarithmic factor --- while the bound from \cref{lem:few-buckets} is not: there are only $\Oh(d)$ reachable buckets, compared to the $\Oh(d^2)$ upper bound provided by \cref{lem:few-buckets}. It is possible to construct another example where the number of reachable buckets is $\Theta(d^2)$, but then each of them contains only $\Theta(d)$ weakly $d$-reachable vertices. (We refrain from giving a formal exposition of this example for the sake of brevity.) We suspect this might not be a coincidence, and we actually conjecture that for every $d\in \N$, planar graph $G$, and Koebe ordering $\pleq$ of $G$, it holds that $\wcol_d(G,\pleq)\leq d^3\cdot \ln^{\Oh(1)} d$. If this was the case, then it would be conceivable that by taking any Koebe ordering of a planar graph, and somehow reshuffling similarly-sized discs in order to avoid the example from \cref{prop:scol-lb}, it would be possible to obtain a vertex ordering with weak $d$-coloring number that is subcubic in $d$. This would resolve a notorious open problem in the area, see e.g.~\cite[Problem~1]{JoretM21}.

\paragraph*{Acknowledgements.} The results presented in this paper were obtained during the trimester on Discrete Optimization at the Hausdorff Research Institute for Mathematics (HIM) in Bonn, Germany. We are thankful for the possibility of working in the stimulating and creative research environment at HIM. We also thank Piotr Micek for helpful discussions about the state-of-the-art of the bounds on generalized coloring numbers in planar graphs.

\bibliography{ref}

\appendix

\section{Lower bounds for admissibility and strong coloring number}\label{app:scol} 

In this appendix we present two lower bounds: for the $d$-admissibility and the strong $d$-coloring number of planar graphs. Both constructions have been known, but were either unpublished or had the analysis omitted. We include them here for completeness and to provide a source for future reference.

We start with $d$-admissibility. The following lower bound was communicated to us by Zden\v{e}k Dvo\v{r}\'ak and Sebastian Siebertz~\cite{adm-private} and has not been published. We are grateful to Zden\v{e}k and Sebastian for allowing us to include their construction here.

\newcommand{\Ns}{\mathsf{N}}
\newcommand{\Es}{\mathsf{E}}
\newcommand{\Ws}{\mathsf{W}}
\newcommand{\NW}{\mathsf{NW}}
\newcommand{\NE}{\mathsf{NE}}
\newcommand{\SW}{\mathsf{SW}}
\newcommand{\SE}{\mathsf{SE}}

\begin{proposition}\label{prop:adm-lb}
 For every integer $k\geq 2$ there exists a planar graph $G$ such that $\adm_{k 2^{k+2}}(G)\geq 2^k-1$.
\end{proposition}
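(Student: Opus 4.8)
The goal is to exhibit, for every $k\geq 2$, a planar graph whose $d$-admissibility is at least $2^k-1$ for $d=k2^{k+2}$, i.e., that grows roughly like $\Omega(d/\log d)$. The plan is to build a recursively nested construction of depth $k$, where at each level we create many disjoint escape routes that pass through smaller and smaller "copies'' of the previous level. Concretely, I would fix a root vertex $u$, place it last in the intended ordering (largest in $\pleq$), and attach to it $2^k-1$ disjoint paths, each of which must reach a vertex smaller than $u$; the length budget $d=k2^{k+2}$ is what allows these paths to be routed without interfering with one another.

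\textbf{Key steps, in order.} First I would describe the \emph{gadget at level $k$}: it has a distinguished \emph{apex} $u$ together with a balanced binary-tree-like branching structure. Think of a rooted binary tree $T$ of height $k$ attached at $u$: internally, from $u$ we branch into two level-$(k-1)$ gadgets; each of those branches into two level-$(k-2)$ gadgets, and so on, so that at the bottom there are $2^{k}$ leaves but the apexes are arranged so the leaves of the whole structure are the unique vertices that are \emph{smaller} than $u$ in the ordering. The ordering $\pleq$ is set so that $u$ is the largest vertex and the "target'' leaves are the smallest; the branching vertices in between are ordered so that every root-to-leaf path in $T$ is a \emph{strong} reachability path from $u$ (all internal vertices strictly above $u$). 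Second, I would verify planarity: the construction is essentially a planar embedding of a (subdivided) complete binary tree with gadgets glued in a nested, non-crossing fashion, so it is planar by construction. Third, I would do the \emph{path-length accounting}: a root-to-leaf strong reachability path traverses $k$ levels, and within level $j$ it spends at most some $O(2^{j})$ edges (or a bounded amount per level after subdivision); summing the geometric-type series gives a total length of at most $k2^{k+2}$, which is exactly the allotted $d$. The precise per-level budget is chosen so the sum telescopes into this bound. Fourth, these $2^k-1$ (or $2^k$, adjusting by one for a fencepost) root-to-leaf paths are pairwise vertex-disjoint except at $u$ by the tree structure, so they witness $\adm_{d}(G)\geq 2^k-1$.

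\textbf{Main obstacle.} The delicate point is arranging the vertex ordering \emph{and} the gadget sizes simultaneously so that (a) every one of the $2^k-1$ escape paths is a genuine \emph{strong} reachability path — meaning every internal vertex is strictly larger in $\pleq$ than the apex $u$, which forces a careful interleaving of the orders of the nested sub-gadgets — while (b) the total length stays within $k2^{k+2}$. If the sub-gadgets are made too small the strong-reachability condition can fail (some intermediate vertex ends up smaller than $u$); if made too large the path-length budget blows up past the geometric sum. So the heart of the argument is a bookkeeping lemma: by induction on $k$, a level-$k$ gadget admits an ordering of its vertices and $2^k-1$ strong reachability paths from its apex to its smallest vertices, each of length at most $k2^{k+2}$, with the apex being the global maximum. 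The inductive step glues two level-$(k-1)$ gadgets below the apex, shifts all their vertices to be larger than the new apex by reindexing, adds a constant number of connector vertices, and checks that $2\cdot(k-1)2^{k+1} + O(2^k) \leq k2^{k+2}$, which holds comfortably. Once this lemma is in place, applying it at level $k$ and taking the gadget itself as $G$ finishes the proof.
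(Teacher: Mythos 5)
Your proposal has a fundamental gap: it proves the wrong inequality. The quantity $\adm_{d}(G)$ is defined as the \emph{minimum} over all vertex orderings $\pleq$ of $G$ of $\adm_{d}(G,\pleq)$, so a lower bound on $\adm_d(G)$ must be established against an \emph{adversarially chosen} ordering: for every ordering $\pleq$ one must exhibit some vertex with $2^k-1$ disjoint strong reachability paths. Your plan instead fixes the ordering yourself (``place $u$ last in the intended ordering'', ``the ordering $\pleq$ is set so that $u$ is the largest vertex and the target leaves are the smallest''), which only shows that \emph{some} ordering has large admissibility --- a statement with no bearing on $\adm_d(G)$. Moreover, the construction you describe is essentially a (subdivided) complete binary tree with nested gadgets, and tree-like graphs cannot work at all: ordering the vertices of a tree by distance from the root, any strong reachability path from a vertex $v$ to an earlier vertex must leave the subtree of $v$ and hence pass through the parent of $v$, so at most one path of a family sharing only $v$ can do so, giving admissibility $1$ for that ordering. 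So even with the bookkeeping lemma carried out, the graph you build would have $\adm_d(G)=O(1)$.

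The paper's construction is designed precisely to survive the adversarial ordering. It places a $2^k\times 2^k$ grid $H_w$ at every internal node $w$ of the binary branching pattern and a special vertex $v_u$ at each of the $2^k$ leaves; the grids act as crossbar routing gadgets. By induction one builds, for \emph{every} leaf $u$, a family $\Qq_u$ of $2^k-1$ paths, pairwise vertex-disjoint except at $v_u$, each of length at most $k2^{k+2}$, connecting $v_u$ to every other special vertex $v_{u'}$. Then, given an arbitrary ordering $\pleq$, one looks at the special vertex $v_{u_{\max}}$ that is $\pleq$-maximal among the special vertices, and trims each path of $\Qq_{u_{\max}}$ at its first vertex that is $\pleq$-smaller than $v_{u_{\max}}$ (such a vertex exists since the path ends at some $v_{u'}\pleq v_{u_{\max}}$); the trimmed paths are strong reachability paths and witness $\adm^{G,\pleq}_{k2^{k+2}}(v_{u_{\max}})\geq 2^k-1$. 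This ``one routing family per potential maximum, then trim'' step is the key idea missing from your proposal, and the grids (rather than a tree) are what make the $2^k-1$ disjoint routes available simultaneously from every leaf within the length budget.
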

\begin{proof}
 The construction is depicted in \cref{fig:construction}.
 Let $L=\{\Ws,\Es\}^k$ and $K=\{\Ws,\Es\}^{<k}$ be the sets of all words over the alphabet $\{\Ws,\Es\}$ of length $k$ and of length strictly smaller than $k$, respectively.
 For each $w\in K$, construct a $2^k\times 2^k$ grid $H_w$ and denote  the sides of $H_w$ as $\NW_w$, $\NE_w$, $\SE_w$, and $\SW_w$ in order. Whenever speaking about the order of vertices on these sides, we use the north-to-south convention: the vertices are ordered naturally along the sides, where the first vertex on sides $\NW_w$ and $\NE_w$ is the corner at their intersection, and the last vertex on sides $\SW_w$ and $\SE_w$ is the corner at their intersection.
 
 Next, whenever $w,w'\in K$ are such that $w'=w\Ws$, for each $i\in \{1,\ldots,2^k\}$ we add an edge between the $i$th vertex of $\SW_w$ and the $i$th vertex of $\NE_{w'}$. In case $w'=w\Es$, perform a symmetric construction but with the roles of $\Ws$ and $\Es$ swapped. Finally, for each $u\in L$ construct a vertex $v_u$ and make it adjacent to all vertices of $\SW_w$ if $u=w\Ws$ for some $w\in K$, and to all vertices of $\SE_w$ if $u=w\Es$ for some $w\in K$. As depicted in \cref{fig:construction}, the graph $G$ constructed in this way is planar.

  \begin{figure}[ht]
 \centering
  \includegraphics[width=\textwidth]{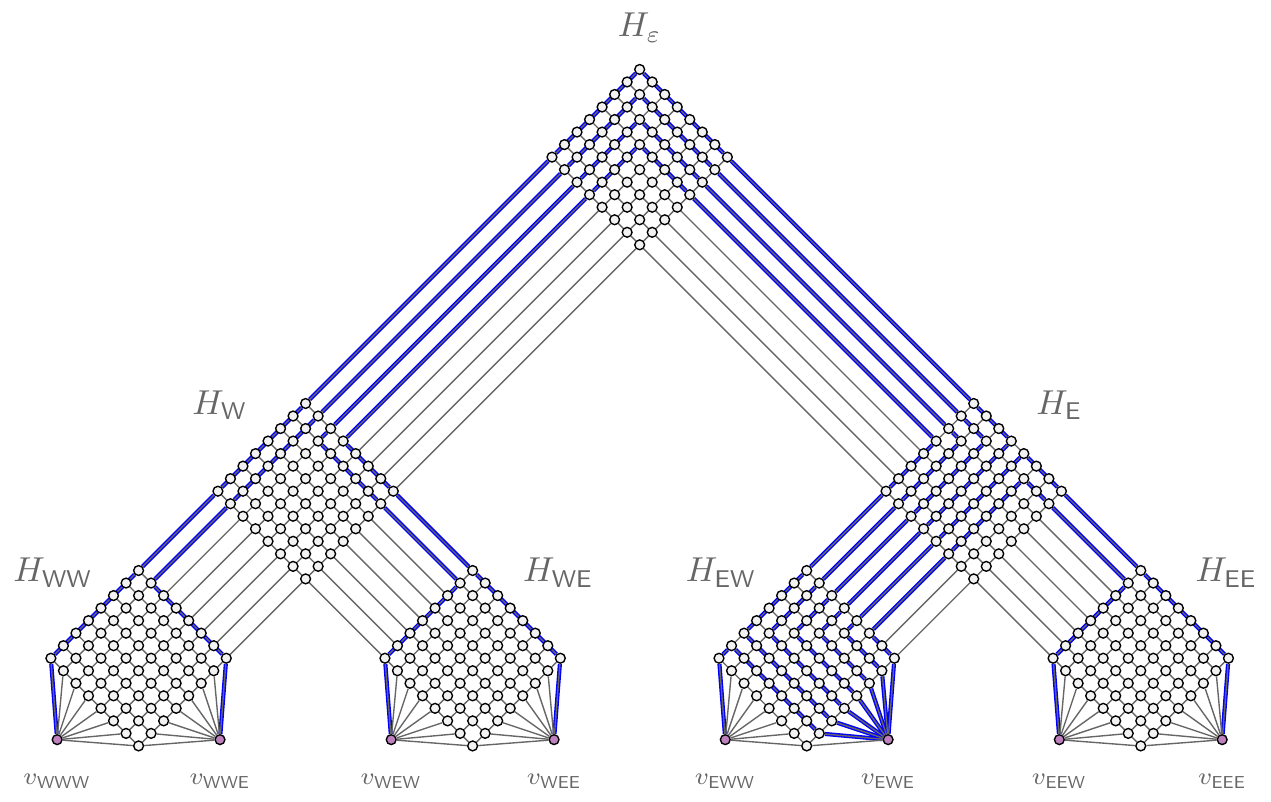}
  \caption{Construction of graph $G$ for $k=3$. 
  Path family $\Qq_{\Es\Ws\Es}$ constructed later in the proof is highlighted in blue.}\label{fig:construction}
 \end{figure}

 Call a path $P$ in $G$ {\em{straight}} if for every $w\in K$, the intersection of $P$ with $H_w$ is either empty or consists of a single path of length at most $2^{k+1}-2$. Note that thus, a straight path intersects at most $2k-3$ different grids $H_w$ and may have vertices of $\{v_u\colon u\in L\}$ only as endpoints. Thus, every straight path has length at most $(2k-3)(2^{k+1}-1)+1\leq k2^{k+2}$.

 We first construct, for every $w\in K$ and $s\in \{\Ws,\Es\}$, a family $\Pp_{ws}$ of paths in $G$ with the following properties:
 \begin{itemize}[nosep]
  \item paths in $\Pp_{ws}$ are straight and pairwise vertex-disjoint;
  \item $|\Pp_{ws}|=2^{k-|w|-1}$, where $|w|$ is the length of $w$;
  \item every path $P\in \Pp_{ws}$, starts at a vertex $v_u$ for some $u\in L$ such that $ws$ is a prefix of $u$, and ends at a vertex that is among the first $|\Pp_{ws}|$ vertices of the side $(\mathsf{S} s)_{w}$; and 
  \item for every $P\in \Pp_{ws}$, all internal vertices of $P$ are contained in the union of grids $H_{w'}$ for $w'\in K$ such that $ws$ is a prefix of $w'$.
 \end{itemize}
 Note that there are exactly $2^{k-|w|-1}$ vertices $v_u$ for which $ws$ is a prefix of $u$, hence every such vertex is an endpoint of a path from $\Pp_{ws}$. Similarly, every vertex among the first $|\Pp_{ws}|$ vertices of the side $(\mathsf{S} s)_{w}$ is an endpoint of a path from $\Pp_{ws}$.
 
 The construction proceeds by induction on $k-|w|$. In case $k-|w|=1$, we have $ws\in L$, so the side $(\mathsf{S} s)_{w}$ is entirely adjacent to the vertex $v_{ws}$. So it suffices to set $\Pp_{ws}=\{P\}$, where $P$ is the two-vertex path induced by $v_{ws}$ and the first vertex of the side $(\mathsf{S} s)_{w}$. For the induction step, by symmetry assume $s=\Ws$. By induction we can construct suitable families $\Pp_{w\Ws\Ws}$ and $\Pp_{w\Ws\Es}$. To construct $\Pp_{w\Ws}$, it suffices to take the union of $\Pp_{w\Ws\Ws}$ and $\Pp_{w\Ws\Es}$, then extend each path of this union within the grid $H_{w\Ws}$ so that it ends at a different vertex among the first $2^{k-|w|+1}$ vertices of the side $\NE_{w\Ws}$, and finally extend it by a single edge so that it ends at a different vertex among the first $2^{k-|w|+1}$ vertices of the side $\SW_{w}$. It can be easily seen that this can be done; see the left panel of \cref{fig:patching}. Moreover, each extension uses a path within $H_{w\Ws}$ of length at most $2^{k+1}-2$ plus one edge connecting $H_{w\Ws}$ and $H_w$. Thus, the paths of $\Pp_{w\Ws}$ remain straight. 
 
 This concludes the inductive construction of path families $\Pp_{ws}$ for $w\in K$ and $s\in \{\Ws,\Es\}$.

  \begin{figure}[ht]
 \centering
  \includegraphics[width=\textwidth]{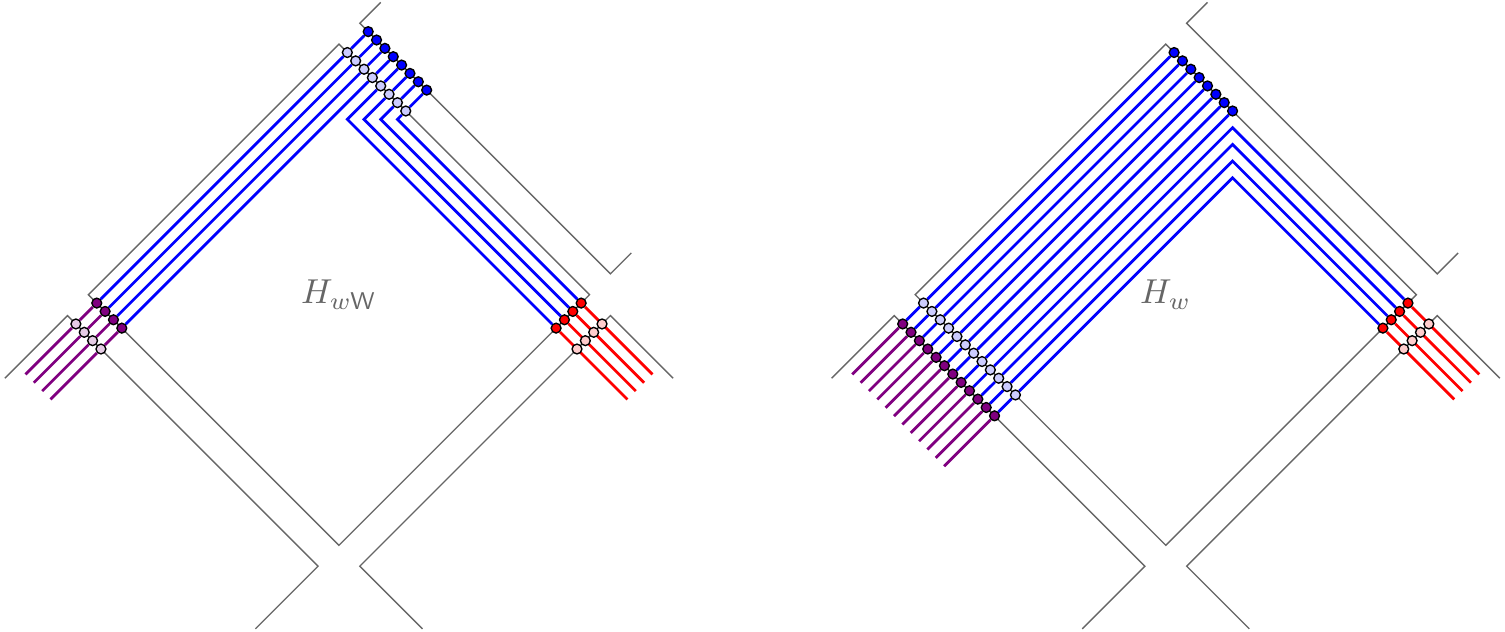}
  \caption{Left panel: Induction step in the construction of $\Pp_{w\Ws}$. Families $\Pp_{w\Ws\Ws}$ and $\Pp_{w\Ws\Es}$ are depicted in violet and red, respectively, while paths used in the extension are depicted in blue. Right panel: Induction step in the construction of $\Qq_{w,u}$ when the last symbol of $w$ is $\Ws$ and $u$ is a prefix of $w\Ws$. Families $\Qq_{w\Ws,u}$ and $\Pp_{w\Ws\Es}$ are depicted in violet and red, respectively, while paths used in the extension are depicted in blue.}\label{fig:patching}
 \end{figure}

 Next, we construct, for every $u\in L$ and every $w\in K$ that is a prefix of $u$, a family of paths $\Qq_{w,u}$ with the following properties:
 \begin{itemize}[nosep]
  \item paths in $\Qq_{w,u}$ are straight and pairwise vertex-disjoint except for sharing $v_u$;
  \item $|\Qq_{w,u}|=2^k-1$;
  \item every path $P\in \Qq_{w,u}$ starts at $v_u$ and ends at a vertex from the set $I_{w,u}$, where $I_{w,u}$ comprises the first $2^{k}-2^{k-|w|}$ vertices of the side $(\Ns t)_{w}$ ($t\in \{\Ws,\Es\}$ being the symbol other than the last symbol of $w$), and all vertices $v_{u'}$ such that $u'\in L$, $u'\neq u$, and $w$ is a prefix of $u'$; and
  \item for every $P\in \Qq_{w,u}$, all the internal vertices of $P$ are contained in the union of grids $H_{w'}$ for $w'\in K$ such that $w$ is a prefix of $w'$.
 \end{itemize}
 Note that in the third point above, $t$ is not well-defined in case $w$ is the empty word $\varepsilon$. But then $2^k-2^{k-|w|}=0$, so we simply do not include any vertices of this type in $I_{w,u}$.
 As before, observe that $|I_{w,u}|=2^k-1$, so every vertex of $I_{w,u}$ is an endpoint of a path from $\Qq_{w,u}$.
 
 Again, the construction proceeds by induction on $k-|w|$. A bit unconventionally, let us explain first the induction step, as the base case will follow from applying the same construction to a degenerate case. We also focus on the case when $w\neq \varepsilon$; the case $w=\varepsilon$ works analogously.
 Then, by symmetry, suppose that $w=w'\Ws$ for some $w'\in K$ and that $|w|<k-1$. Suppose further that $w\Ws$ is a prefix of~$u$, the other case (when $w\Es$ is a prefix of $u$) being again symmetric. Consider families $\Qq_{w\Ws,u}$ (obtained from the induction assumption) and $\Pp_{w\Es}$ (constructed before). Then, these families can be used to construct a suitable family $\Qq_{w,u}$ as follows (see the right panel of \cref{fig:patching}):
 \begin{itemize}[nosep]
  \item Start with setting $\Qq_{w,u}\coloneqq \Qq_{w\Ws,u}$.
  \item For each $Q\in \Qq_{w\Ws,u}$ that ends among the first $2^k-2^{k-|w|}$ vertices of the side $\NE_{w\Ws}$, say at the $i$th vertex, extend $Q$ using an edge between $H_{w\Ws}$ and $H_w$ and a path within $H_w$ so that it ends at the $i$th vertex of $\NE_{w}$.
  \item For each $Q\in \Qq_{w\Ws,u}$ that ends among the next $2^{k-|w|-1}$ vertices of the side  $\NE_{w\Ws}$, say at the vertex number $2^k-2^{k-|w|}+i$ on this side for some $i\in \{1,\ldots,2^{k-|w|-1}\}$, extend $Q$ using an edge between $H_{w\Ws}$ and $H_w$ and a path within $H_w$ so that it ends at the $i$th vertex of $\SE_{w}$. Then concatenate $Q$ with the unique path of $\Pp_{w\Es}$ that ends at the same vertex.
 \end{itemize}
 It can be easily seen that the extensions above can be obtained using paths within $H_w$ that are pairwise vertex-disjoint and of length at most $2^{k+1}-2$ each. Thus, the paths in the constructed family $\Qq_{w,u}$ remain straight and pairwise vertex-disjoint except for sharing $v_u$.
 
 The construction for $w=\varepsilon$ follows the same principle, except that we do not need to construct paths that end at vertices of $\NW_\varepsilon$ or $\NE_\varepsilon$. In the base case we may use the same construction, only that we interpret $\Qq_{u,u}$ to be a family of $2^k-1$ single-vertex paths that start and end at $v_u$.
 
 This concludes the inductive construction of path families $\Qq_{w,u}$ for $u\in L$ and $w\in K$ such that $w$ is a prefix of $u$. For $u\in L$, denote $\Qq_u=\Qq_{\varepsilon,u}$. Let us summarize the properties of $\Qq_u$:
 \begin{itemize}[nosep]
  \item Paths in $\Qq_u$ are pairwise vertex-disjoint except for sharing $v_u$.
  \item Each path in $\Qq_u$ is straight, and therefore of length at most $k2^{k+2}$.
  \item For every $u'\in L$, $u'\neq u$, there is a path $Q\in \Qq_u$ that connects $v_u$ with $v_{u'}$.
 \end{itemize}
 Given these properties, we can argue that $\adm_{k2^{k+2}}(G)\geq 2^k-1$. Consider any vertex ordering $\pleq$ of $G$. Let $u_{\max}\in L$ be such that $v_{u_{\max}}$ is $\pleq$-maximum among vertices $\{v_u\colon u\in L\}$. Then the path family $\Qq'_u$, obtained from $\Qq_u$ by trimming every path to the prefix till the first encounter of a vertex smaller in $\pleq$ than $v_{u_{\max}}$, witnesses that $\adm^{G,\pleq}_{k2^{k+2}}(v_{u_{\max}})\geq 2^{k}-1$. As $\pleq$ was chosen arbitrarily, it follows that $\adm_{k2^{k+2}}(G)\geq 2^k-1$.
\end{proof}

Note that from \cref{prop:adm-lb} it follows that for every $d\in \N$ there is a planar graph $G$ with $\adm_d(G)\geq \Omega(d/\ln d)$. This means that the upper bound of \cref{thm:adm-ub} is asymptotically tight.

\medskip

We continue with the strong $d$-coloring numbers.
As noted by van~den~Heuvel et al. in~\cite{HeuvelMQRS17}, there are planar graphs with strong $d$-coloring number $\Omega(d)$ and this lower bounds is actually realized by grids, but the work~\cite{HeuvelMQRS17} does not contain any formal proof of this fact. The argument can be considered folklore in the community, but we were unable to find any published work containing its presentation.

\begin{proposition}
 Let $G$ be the $d\times d$ grid for any $d\in \N$. Then $\scol_{3d-2}(G)\geq d/2$.
\end{proposition}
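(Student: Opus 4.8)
The plan is to fix an arbitrary vertex ordering $\pleq$ of $G$, identify $V(G)$ with $\{1,\dots,d\}^2$ (so the rows $R_1,\dots,R_d$ and columns $C_1,\dots,C_d$ are paths on $d$ vertices each), and exhibit a single vertex $v$ with $|\SReach^{G,\pleq}_{3d-2}[v]|\ge d-1\ge d/2$ (the case $d=1$ is trivial). For the choice of $v$: let $\mu_i$ be the $\pleq$-minimum vertex of row $R_i$, pick $i^\star$ with $\mu_{i^\star}$ $\pleq$-maximum among $\mu_1,\dots,\mu_d$, and set $v:=\mu_{i^\star}$. Two features of this choice drive everything: (a) every vertex of $R_{i^\star}\setminus\{v\}$ is strictly larger than $v$ (since $v$ is the $\pleq$-minimum of its row), so $R_{i^\star}\setminus\{v\}$ is a ``highway'' of post-$v$ vertices that meets every column; and (b) every row $R_i$ contains a vertex $\pleq v$, namely $\mu_i$.

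The reaching mechanism I would use is this: from $v$ one may walk freely along the highway (all internal vertices post-$v$), and from any highway vertex $(i^\star,j)$ one may climb up or down column $C_j$ as long as one stays inside $\{v\}\cup\{w:w\succ v\}$. Formally, for each $j$ let $[a_j,b_j]\ni i^\star$ be the inclusion-maximal interval of rows with $\{(a_j,j),\dots,(b_j,j)\}\subseteq\{v\}\cup\{w:w\succ v\}$. If $a_j\ge 2$ then $(a_j-1,j)\prec v$, and it is reached from $v$ by the strong path ``along the highway from $v$ to $(i^\star,j)$, up $C_j$ to $(a_j,j)$, then one step to $(a_j-1,j)$'', whose length is at most $(d-1)+(i^\star-a_j)+1\le 2d-1\le 3d-2$; symmetrically if $b_j\le d-1$. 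Call the vertex so obtained the \emph{catch} of column $j$; catches of distinct columns lie in distinct columns, hence are pairwise distinct.

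It then remains to run a dichotomy on $B:=\{j:\,C_j\subseteq\{v\}\cup\{w:w\succ v\}\}$, the set of columns admitting no catch. If $|B|\le d/2$, then each of the $\ge d-|B|\ge d/2$ columns outside $B$ contributes a catch, and these are pairwise distinct, so $|\SReach_{3d-2}[v]|\ge d/2$. If $|B|>d/2$, then $B\ne\emptyset$; fix $j_0\in B$, so $C_{j_0}$ consists of post-$v$ vertices together with (possibly) $v$. For each $i\ne i^\star$ the vertex $(i,j_0)$ is post-$v$, and by (b) the row $R_i$ contains a vertex $\pleq v$; take the strong path ``along the highway from $v$ to $(i^\star,j_0)$, along $C_{j_0}$ to $(i,j_0)$, then along $R_i$ toward a $\pleq v$ vertex, stopping at the first one''. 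Its three legs use at most $d-1$ edges each, so the length is at most $3d-3\le 3d-2$, and the reached vertices lie in pairwise distinct rows; hence $|\SReach_{3d-2}[v]|\ge d-1\ge d/2$. Since $\pleq$ was arbitrary, $\scol_{3d-2}(G)\ge d/2$.

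The one genuinely delicate point is \emph{distinctness}: the $\approx d$ strong paths fired from $v$ could a priori all terminate at the same vertex (this is exactly why the naive attempt of trying to reach all $d$ row-minima $\mu_i$ directly fails — the paths to them need not avoid small vertices). The remedy is to settle for ``the first $\pleq v$ vertex met along a controlled three-leg route'' and to route so that the endpoints are segregated: one per column in the first case, one per row in the second. The length bookkeeping is then immediate, each route consisting of a highway leg, a vertical leg and a horizontal leg of at most $d-1$ edges apiece; this is precisely where the threshold $3d-2$ (rather than something smaller) comes from.
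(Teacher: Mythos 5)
Your proof is correct, and it takes a genuinely different route from the paper's. Both arguments start from the same vertex choice (the $\pleq$-maximum among the per-line minima; you use rows, the paper uses columns, which is immaterial) and both use the same truncation principle of stopping a controlled walk at the first vertex that is $\pleq$ the chosen root. The difference lies in how the family of strong paths is produced. The paper invokes Menger's theorem to obtain $d$ pairwise vertex-disjoint paths from the root's column $C$ to the set $U$ of all column-minima, then uses a counting argument (at most $d/2$ of $d$ disjoint paths in a $d^2$-vertex graph can have more than $2d$ vertices) to keep $\ge d/2$ paths of length at most $2d-1$, and gets distinctness of the truncated endpoints for free from disjointness. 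You instead build the paths explicitly as staircase routes along at most three grid lines (the root's row as a highway, then a column, then possibly a row), which bounds the length by $3(d-1)\le 3d-2$ directly, and you secure distinctness by segregating endpoints one-per-column or one-per-row, via the dichotomy on columns that are entirely post-$v$. Your version is more elementary and self-contained (no Menger, no counting of long paths), and in the second branch even yields $d-1$ reachable vertices; the paper's version is shorter and its abstract path system is more flexible, at the cost of the extra length-counting step that your explicit routing renders unnecessary.
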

\begin{proof}
 Let $\pleq$ be any vertex ordering of $G$; our goal is to prove that $\scol_{3d-2}(G,\pleq)\geq d/2$. Index rows and columns of $G$ naturally. For each $i\in \{1,\ldots,d\}$, let $u_i$ be the $\pleq$-minimal vertex of the $i$th column, and let $U\coloneqq \{u_i\colon i\in \{1,\ldots,d\}\}$. 
 Further, let $k$ be the index such that $u_k$ is $\pleq$-maximal the among vertices of $U$, and let $C$ be the set of all vertices in the $k$th column of $G$. The choice of $k$ implies that $u\pleq c$ for all $u\in U$ and $c\in C$.
 
 We observe that in $G$ there exists a family ${\cal P}$ of $d$ pairwise vertex-disjoint paths, each connecting a vertex in $C$ with a vertex in $U$. Indeed, otherwise, by Menger's theorem, there would be a vertex subset $X$ with $|X|<d$ that would intersect every such path. But then there would exist a row and a column of $G$ that would not intersect $X$, while the union of this row and this column would contain a path connecting a vertex of $C$ with a vertex of $U$; a contradiction.
 Further, since the paths from ${\cal P}$ are pairwise vertex-disjoint, there are $d$ of them, and the whole graph $G$ contains $d^2$ vertices, we conclude that at most $d/2$ paths from ${\cal P}$ may contain more than $2d$ vertices. Therefore, we can find a subfamily ${\cal P'}\subseteq {\cal P}$ of size at least $d/2$ such that each $P\in {\cal P'}$ has length at most $2d-1$.
 
 For each path $P\in {\cal P'}$, let $w(P)$ be the first (i.e. closest to the endpoint in $C$) vertex of $P$ satisfying $w(P)\pleq c$ for all $c\in C$. Note that $w(P)$ is well-defined, since each $P\in {\cal P'}$ contains at least one vertex $w$ satisfying $w\pleq c$ for all $c\in C$: the endpoint of $P$ belonging to $U$ is such a vertex. Since paths of ${\cal P'}$ are pairwise vertex-disjoint, vertices $w(P)$ for $P\in {\cal P'}$ are pairwise different. Moreover, by concatenating a  prefix of $P$ from the endpoint on $C$ to $w(P)$ with a subpath of $C$ from $u_k$ to the said endpoint, we obtain a strong reachability path of length at most $3d-2$ that witness that $w(P)\in \SReach_{3d-2}[u_k]$. It follows that
 $$\scol_{3d-2}(G,\pleq)\geq |\SReach_{3d-2}[u_k]|\geq |{\cal P}'| \geq d/2.\qedhere$$
\end{proof}

\end{document}
